\newtheoremstyle{mytheorem}
  {3pt}
  {3pt}
  {\itshape}
  {}
  {\bfseries}
  {.}
  {1em}
  {}
\newtheorem{theoremA}{Theorem}
\newtheorem{theoremB}{Theorem}
\newtheorem{theoremC}{Theorem}
\newtheorem{definition}{Definition}[section]
\newtheorem{proposition}[definition]{Proposition}
\theoremstyle{mytheorem}
\newtheorem{theorem}[definition]{Theorem}
\newtheorem{lemma}[definition]{Lemma}
\newtheorem{corollary}[definition]{Corollary}
\newtheorem{question}[definition]{Question}
\newtheorem{conjecture}[definition]{Conjecture}
\theoremstyle{remark}
\newtheorem{remark}[definition]{Remark}
\theoremstyle{example}
\newtheorem{example}[definition]{Example}
\begin{document}
\title{ Scalar Curvature in Dimension 4 }
\author{Jialong Deng \thanks{Jialongdeng@gmail.com}}
\date{}

\maketitle
\begin{abstract}
We prove that every locally conformally flat metric on a closed, oriented hyperbolic $4$-manifold with scalar curvature bounded below by $-12$ satisfies Schoen’s conjecture. We also classify all closed Riemannian $4$-manifolds of positive scalar curvature that arise as total spaces of fibre bundles. For a closed locally conformally flat manifold $(M^4,g)$ with scalar-flat and $\pi_2(M^4) \neq 0$,  we show that the universal Riemannian cover $(\widetilde{M},\tilde{g})$ is homothetic to the standard product $\mathbb{H}^2 \times \mathbb{S}^2$.  This affirmatively answers a question of N.~H.~Noronha. 
\end{abstract}

\tableofcontents
\section{Introduction}
For metrics with negative scalar curvature, making the scalar curvature more negative tends to increase the volume locally.  Globally, however, the behavior is more subtle.  
In the study of the Yamabe problem, Schoen \cite{MR994021} conjectures that if \( g \) is any Riemannian metric on a closed hyperbolic \( n \)-manifold \((M^n, g_H)\) with \( n \ge 3 \) and its scalar curvature satisfies $\mathrm{Sc}_g \ge -n(n-1),$ then the total volume of \( M \) with respect to \( g \) is at least as large as the hyperbolic volume of \( M \). In dimension 3, Schoen's conjecture has been studied by Anderson \cite[page~132]{MR2213687}. In this paper, we  give a partial confirmation of Schoen's conjecture in dimension 4 in the following cases.

\begin{theoremA}\label{A}
Let \( (M^4, g_H) \) be a closed hyperbolic \(4\)-manifold.
Suppose \( g_1 \) is a Riemannian metric on \( M^4 \) with 
\( \mathrm{Sc}_{g_1} \ge -12 \).
Assume in addition that one of the following holds:
\begin{enumerate}
    \item[(i)] \( g_1 \) is self–dual or anti–self–dual;
    \item[(ii)] the Yamabe metric \( g \in [g_1] \) with \( \mathrm{Sc}_g = -12 \)
    satisfies one of the conditions:
    \begin{enumerate}
        \item[(a)] \( g \) is Einstein;
        \item[(b)] for every point \( x \in (M^4, g) \), there exists 
        \( r(x) > 0 \) such that for all \( 0 < r \le r(x) \), the geodesic \( r \)-balls satisfy $  \mathrm{Vol}_g(B^g_r(x)) \ge  \mathrm{Vol}_{g_H}(B^{g_H}_r(x)). $
    \end{enumerate}
\end{enumerate}

Then $\mathrm{Vol}_{g_1}(M^4) \geq \mathrm{Vol}_{g_H}(M^4)$. Moreover, if equality holds, i.e., \( \mathrm{Vol}_{g_1}(M^4) = \mathrm{Vol}_{g_H}(M^4) \), then  \( g_1 \) is isometric to \( g_H \).
\end{theoremA}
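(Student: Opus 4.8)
The strategy is to replace $g_1$ by the Yamabe metric $g$ of its conformal class, to bound $\mathrm{Vol}_g(M)$ below by $\mathrm{Vol}_{g_H}(M)$ via the four-dimensional Chern--Gauss--Bonnet formula (or, in the Einstein subcase, the minimal volume entropy inequality of Besson--Courtois--Gallot), and to recover $\mathrm{Vol}_{g_1}(M)\ge\mathrm{Vol}_g(M)$ afterwards by a maximum principle. First I would reduce case (i) to the locally conformally flat case: since $M$ carries the locally conformally flat metric $g_H$, its signature vanishes, so for a self-dual or anti-self-dual $g_1$ the signature formula $\tau(M)=c\int_M(|W^+_{g_1}|^2-|W^-_{g_1}|^2)\,dV_{g_1}$ with $c>0$ forces the surviving half of the Weyl tensor to vanish, i.e.\ $W_{g_1}=0$. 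Next, a closed hyperbolic manifold is enlargeable and hence admits no metric of positive scalar curvature, so the Yamabe constant of $[g_1]$ is $\le 0$; it is nonzero, because in case (ii) the hypothesis supplies a Yamabe metric with $\mathrm{Sc}=-12$, while in case (i) a scalar-flat metric in $[g_1]$ would (like $g_1$) be locally conformally flat and would make the Chern--Gauss--Bonnet integrand $\le 0$, contradicting $\chi(M)=\tfrac{3}{4\pi^2}\mathrm{Vol}_{g_H}(M)>0$. Hence $[g_1]$ contains a Yamabe metric $g$, unique up to scale and of constant negative scalar curvature, which I normalize to $\mathrm{Sc}_g\equiv -12=\mathrm{Sc}_{g_H}$. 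Writing $g_1=u^2g$ with $u>0$, at a point $p$ where $u$ is minimal the conformal transformation law for scalar curvature gives $\mathrm{Sc}_{g_1}(p)\le u(p)^{-2}\,\mathrm{Sc}_g=-12\,u(p)^{-2}$ (the Hessian term having the favorable sign there); combined with $\mathrm{Sc}_{g_1}(p)\ge -12$ this forces $u(p)\ge 1$, so $u\ge 1$ on $M$ and $\mathrm{Vol}_{g_1}(M)=\int_M u^4\,dV_g\ge\mathrm{Vol}_g(M)$, with equality only if $u\equiv 1$, i.e.\ $g_1=g$. It thus remains to prove $\mathrm{Vol}_g(M)\ge\mathrm{Vol}_{g_H}(M)$, the equality case reducing to showing that $g$ is then hyperbolic.

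For the key inequality I would use the four-dimensional Chern--Gauss--Bonnet formula. Using $\mathrm{Vol}_{g_H}(M)=\tfrac{4\pi^2}{3}\chi(M)$ and $\mathrm{Sc}_g\equiv -12$, it rearranges for $g$ into
\[
24\bigl(\mathrm{Vol}_g(M)-\mathrm{Vol}_{g_H}(M)\bigr)\;=\;2\int_M|\mathring{\mathrm{Ric}}_g|^2\,dV_g\;-\;\int_M|W_g|^2\,dV_g
\]
(with $|W_g|^2=W_{ijkl}W^{ijkl}$ the squared norm of the full Weyl tensor). In case (i), $W_{g_1}=0$ hence $W_g=0$ by the conformal invariance of the Weyl tensor, and the identity gives $\mathrm{Vol}_g(M)-\mathrm{Vol}_{g_H}(M)=\tfrac1{12}\int_M|\mathring{\mathrm{Ric}}_g|^2\,dV_g\ge 0$. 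In case (ii)(b), I would expand the volumes of small geodesic balls: since $\mathrm{Sc}_g=\mathrm{Sc}_{g_H}=-12$ is constant, the expansions of $\mathrm{Vol}_g(B^g_r(x))$ and $\mathrm{Vol}_{g_H}(B^{g_H}_r(x))$ agree through order $r^{n+2}$, so the hypothesis forces the coefficients of $r^{n+4}$ to compare pointwise; after substituting the dimension-four identities $|R|^2=|W|^2+2|\mathring{\mathrm{Ric}}|^2+\tfrac16\mathrm{Sc}^2$ and $|\mathrm{Ric}|^2=|\mathring{\mathrm{Ric}}|^2+\tfrac14\mathrm{Sc}^2$ this becomes $2|\mathring{\mathrm{Ric}}_g|^2\ge 3|W_g|^2$ at every point. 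Integrating, $2\int_M|\mathring{\mathrm{Ric}}_g|^2\ge 3\int_M|W_g|^2\ge\int_M|W_g|^2$, and the identity again yields $\mathrm{Vol}_g(M)\ge\mathrm{Vol}_{g_H}(M)$.

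Case (ii)(a) cannot be treated this way, because when $g$ is Einstein one has $\mathring{\mathrm{Ric}}_g=0$ and the identity gives $\mathrm{Vol}_g(M)-\mathrm{Vol}_{g_H}(M)=-\tfrac1{24}\int_M|W_g|^2\le 0$, the wrong direction. Instead, $\mathrm{Sc}_g=-12$ forces $\mathrm{Ric}_g=-3g=-(n-1)g$, so Bishop--Gromov on the universal cover bounds the volume entropy by $h(g)\le n-1=h(g_H)$; the Besson--Courtois--Gallot inequality for the identity map $(M,g)\to(M,g_H)$, namely $h(g)^n\,\mathrm{Vol}_g(M)\ge h(g_H)^n\,\mathrm{Vol}_{g_H}(M)$, then gives $\mathrm{Vol}_g(M)\ge\bigl(h(g_H)/h(g)\bigr)^n\mathrm{Vol}_{g_H}(M)\ge\mathrm{Vol}_{g_H}(M)$.

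For the rigidity statement, equality $\mathrm{Vol}_{g_1}(M)=\mathrm{Vol}_{g_H}(M)$ forces both reductions to be equalities, so $g_1=g$ and $\mathrm{Vol}_g(M)=\mathrm{Vol}_{g_H}(M)$. In cases (i) and (ii)(b) the Chern--Gauss--Bonnet identity together with the pointwise estimate then forces $W_g=0$ and $\mathring{\mathrm{Ric}}_g=0$, so $g$ is conformally flat and Einstein, hence of constant sectional curvature $-1$; in case (ii)(a), equality propagates through Besson--Courtois--Gallot, so $g$ is homothetic to $g_H$, and the normalization $\mathrm{Sc}_g=-12$ makes the homothety an isometry. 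In every case $g=g_1$ is a hyperbolic metric on the closed manifold $M$, so by Mostow rigidity it is isometric to $g_H$. The step I expect to be the main obstacle is case (ii)(a): the volume lower bound there, and its equality case, rest entirely on the Besson--Courtois--Gallot theorem and its rigidity statement, since Chern--Gauss--Bonnet has the wrong sign and Bishop--Gromov gives only an upper bound; a more routine but error-prone point is tracking the numerical constants in the fourth-order ball-volume expansion and in the four-dimensional curvature decomposition so that the chain $2\int|\mathring{\mathrm{Ric}}_g|^2\ge 3\int|W_g|^2\ge\int|W_g|^2$ really closes the estimate in case (ii)(b).
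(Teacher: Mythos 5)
Your proposal is correct and follows essentially the same route as the paper: signature zero reduces case (i) to the locally conformally flat situation, the Gauss--Bonnet--Chern identity (combined, for (ii)(b), with the $r^{4}$-coefficient of the geodesic-ball expansion giving $3|W_g|^2\le 2|\mathring{\mathrm{Ric}}_g|^2$) yields $\mathrm{Vol}_g\ge\mathrm{Vol}_{g_H}$, the Einstein case (ii)(a) is handled by the Besson--Courtois--Gallot entropy inequality with the Bishop--Gromov bound $\mathrm{Ent}(g)\le n-1$, and rigidity follows from the equality analysis together with Mostow rigidity. The only (harmless) deviation is that you replace the paper's integral/H\"older comparison within the conformal class (Proposition~\ref{ncsc}) by an equivalent pointwise maximum-principle argument showing $u\ge 1$, and you justify $Y(M,[g_1])<0$ in case (i) by a Gauss--Bonnet contradiction rather than the paper's Ricci-flatness argument.
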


The strategy of the proof is to reduce the problem to the Yamabe metric.
First, we establish a Schoen–conjecture–type volume inequality for the Yamabe metric of negative scalar curvature in the given conformal class.
Second, we apply the Gauss–Bonnet–Chern formula to the Yamabe metric to complete the argument.

Since any finitely presented group $\pi$ can be realized as the fundamental group of a closed Riemannian $4$--manifold with positive scalar curvature, additional assumptions are necessary in order to classify Riemannian $4$--manifolds admitting positive scalar curvature. For example, when the fundamental group is trivial, the combined results of Freedman and Donaldson lead to a classification, up to homeomorphism, of closed Riemannian $4$--manifolds that admit metrics of positive scalar curvature. Independently, by imposing the assumption that the manifold is the total space of a fiber bundle with fiber of positive dimension, one obtains the following classification.

\begin{theoremB}\label{B}
Let $M^4$ be a closed, oriented, smooth $4$–manifold which is the total space of a fiber bundle. Then:
\begin{enumerate}
\item[(1)] If either the fiber or the base is $S^2$, then $M^4$ admits a Riemannian metric of positive scalar curvature.
\item[(2)] If either the fiber or the base is a closed, oriented, connected $3$–manifold $N^3$, then $M^4$ admits a Riemannian metric of positive scalar curvature if and only if $N^3$ does.
\end{enumerate}
\end{theoremB}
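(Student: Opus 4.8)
The plan is to prove the two parts separately and, in each, to split according to the dimension of the fibre (which is $1$, $2$ or $3$, since $\dim M^4=4$). Positive curvature will be produced by the standard ``shrink the small factor'' recipe, and the genuinely topological input is confined to the two forward implications in~(2).

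\medskip
\noindent\emph{Part (1).} If the fibre is $S^2$, the base is a closed surface $\Sigma$; since $\mathrm{Diff}(S^2)\simeq O(3)$, the structure group reduces to isometries of the round $S^2$, so, fixing a connection, the submersion metric with round fibres of radius $t$ has totally geodesic fibres and, by O'Neill's formula, $\mathrm{Sc}=t^{-2}\mathrm{Sc}(S^2)+\mathrm{Sc}(\Sigma)\circ\pi-t^{2}|A|^{2}\to+\infty$ as $t\to 0$; hence $M^4$ has PSC. If instead the base is $S^2$, the fibre is a closed oriented surface $F$ and the bundle is classified by $\pi_1(\mathrm{Diff}_0 F)$: for $F=S^2$ this is the previous case; for $F=\Sigma_g$ with $g\ge 2$ the group $\mathrm{Diff}_0(\Sigma_g)$ is contractible (Earle--Eells), so $M^4=S^2\times\Sigma_g$ and one shrinks the $S^2$-factor; for $F=T^2$ one has $\pi_1(\mathrm{Diff}_0 T^2)=\mathbb{Z}^2$, and an $\mathrm{SL}(2,\mathbb{Z})$-change of basis in the fibre identifies each such bundle with $Y_d\times S^1$, where $Y_d$ is the circle bundle over $S^2$ of Euler number $d=\gcd$ of the clutching class, i.e.\ a spherical space form if $d\neq 0$ and $S^2\times S^1$ if $d=0$; in all cases $Y_d$, hence $Y_d\times S^1$, carries PSC.

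\medskip
\noindent\emph{Part (2), the ``if'' direction.} If $N^3$ carries PSC and $M^4\to N^3$ is the circle bundle, a connection metric with fibres of (constant) length $t$ has $\mathrm{Sc}=\mathrm{Sc}(N)\circ\pi-\tfrac14 t^{2}|F_\theta|^{2}$, which is $>0$ for $t$ small by compactness of $N$. If instead $N^3$ is the fibre over $S^1$, then $M^4$ is the mapping torus of some $\phi\in\mathrm{Diff}(N)$; choosing a PSC metric $g_0$ on $N$ and a path from $g_0$ to $\phi^{*}g_0$ inside $\mathrm{Riem}^{+}(N)$ (path-connected by Marques' theorem in dimension $3$) produces a loop $h_t$ of PSC metrics on $N$ realising the correct twisting, and $h_t+dt^{2}$ on $N\times[0,L]/{\sim_\phi}$ has scalar curvature $\mathrm{Sc}(h_t)+O(1/L)>0$ once $L$ is large.

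\medskip
\noindent\emph{Part (2), the ``only if'' direction — the crux.} I argue contrapositively, using that a closed oriented $3$-manifold with no PSC metric has, by geometrization together with Gromov--Lawson--Schoen--Yau, an aspherical prime summand and is therefore enlargeable. For the mapping torus $M_\phi$, its infinite cyclic cover is $N\times\mathbb{R}$ carrying a $\mathbb{Z}$-periodic, hence uniformly positive, complete PSC metric, and enlargeability of $N$ contradicts Gromov--Lawson's obstruction for complete manifolds of uniformly positive scalar curvature; when $N$ is irreducible aspherical one may instead note that $M_\phi$ is itself aspherical and invoke the theorem that closed aspherical $4$-manifolds admit no PSC, or run a Schoen--Yau minimal-hypersurface descent in the slab $N\times[-k,k]$ producing a closed stable minimal $\Sigma^3$ (smooth, since $\dim M=4$) homologous to $N\times\{0\}$, hence a PSC $3$-manifold admitting a nonzero-degree map to $N$, which is impossible when $N$ has an aspherical summand by comparing the images of fundamental classes under $c_*$ in group homology. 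For the circle bundle $M^4\to N^3$ I would first produce an $S^1$-invariant PSC metric on $M$, write it as $\pi^{*}h+\phi^{2}\theta^{2}$ with $\phi>0$ a function on $N$, and use $\mathrm{Sc}(h)=\mathrm{Sc}(g)+2\Delta_h\log\phi+2|\nabla_h\log\phi|^{2}+\tfrac14\phi^{2}|F_\theta|^{2}$ together with a Cauchy--Schwarz estimate to show that the quadratic form $\psi\mapsto\int_N\!\big(8|\nabla\psi|^{2}+\mathrm{Sc}(h)\psi^{2}\big)$ is positive definite, i.e.\ the conformal Laplacian $-8\Delta_h+\mathrm{Sc}(h)$ has positive first eigenvalue, so the conformal class of $h$ contains a PSC metric by Yamabe theory; thus $N^3$ carries PSC.

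\medskip
The hard part will be justifying the existence of an $S^1$-invariant PSC metric on $M$ — averaging does not obviously preserve positivity of the scalar curvature, so one must either appeal to a dedicated result on circle actions or replace this step by a minimal-hypersurface argument adapted to the free $S^1$-action — and, more generally, organising the reducible case of the forward direction so that the non-aspherical summands of $N$ do not interfere with the obstruction carried by the aspherical one.
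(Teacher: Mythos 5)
Most of your outline is sound and largely parallels the paper: part (1) via Smale/Earle--Eells and shrinking the round $S^2$ fibres (with the $T^2$-bundle case reduced to $S^1\times L(d,1)$, exactly as in the paper), the mapping-torus ``if'' direction via path-connectedness of the space of PSC metrics on a $3$-manifold plus an isotopy-to-concordance gluing, and your minimal-hypersurface fallback for the mapping-torus ``only if'' direction is essentially the paper's argument (the paper runs it inside the compact $M_\varphi$, using the class Poincar\'e dual to the $S^1$-class and the Wang sequence, rather than in a slab of the cyclic cover; your enlargeability/Gromov--Lawson route through $N\times\mathbb{R}$ is a genuinely different, Dirac-theoretic alternative, but it silently uses that a closed $3$-manifold with an aspherical summand is enlargeable, which needs a citation).

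The genuine gap is exactly where you flag it: the ``only if'' direction when the base is $N^3$ and the fibre is $S^1$. Your argument needs an $S^1$-\emph{invariant} PSC metric on $M^4$ before you can write it as $\pi^*h+\phi^2\theta^2$ and run the conformal-Laplacian estimate, and the passage from ``$M$ admits some PSC metric'' to ``$M$ admits an invariant one'' is not available: averaging does not preserve positivity, and no general principle can supply it, since for free circle actions in dimensions $\ge 5$ the implication is actually false --- the Kumar--Sen examples cited in the paper (and the nontrivial $S^1$-bundle over the $K3$ surface) are total spaces with PSC over bases without PSC, so any invariant PSC metric would contradict the submersion argument you intend to use. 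Thus the dimension restriction must enter in an essential way, and your proposed repair (``a minimal-hypersurface argument adapted to the free $S^1$-action'') is not carried out. The paper closes this case by a different mechanism: if $N^3$ has no PSC metric it contains an aspherical summand $K$ in its prime decomposition, and Hanke's argument (as in Kumar--Sen, Theorem~B) applies because the circle fibres are $\pi_1$-null in $M$, so the classifying map $M\to K(\pi_1(M),1)$ cannot be compressed into the $2$-skeleton, while $\pi_1(M)=\pi_1(N)$ is a torsion-free $3$-manifold group satisfying the Strong Novikov conjecture and $M$ is spin (being orientable over a spin base); the index-theoretic obstruction then rules out PSC on $M$ with no invariance assumption. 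Without this (or some substitute of comparable strength), your proof of Theorem~B(2) for circle bundles is incomplete.
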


When the fiber or the base is \( S^2 \), the proof relies on results concerning the group of orientation-preserving diffeomorphisms of oriented closed surfaces; 
When the fiber is  \( N^3 \), the argument depends on the classification of \(3\)-manifolds admitting metrics of positive scalar curvature;  
 When the base is $N^3$ and $N^3$ admits a metric of positive scalar curvature, we use the fact that the space of positive scalar curvature metrics on $N^3$ is path-connected in order to glue concordance metrics and thereby construct a metric of positive scalar curvature on $M^4$. The main result of Bamler and Kleiner \cite{2019arXiv190908710B} shows that the space of positive scalar curvature metrics on $N^3$ is contractible.
 Relying on this result, one can complete the proof.

On the other hand, closed locally conformally flat Riemannian $4$--manifolds with positive scalar curvature were classified by Hamilton, Chen, Tang, and Zhu \cite{zbMATH06081388} using the Ricci flow. A natural next step is to classify locally conformally flat $4$--manifolds admitting scalar-flat metrics. One motivation for this study is that the Riemannian connection associated with a locally conformally flat, scalar-flat metric attains an absolute minimum of the Yang--Mills functional on a closed oriented $4$--manifold.
 Note that for a locally conformally flat $4$-manifold $M^4$ with scalar-flat metric, its universal cover may not be the Euclidean space $\mathbb{E}^4$ or the product $(\mathbb{H}^2 \times S^2, g_{H} \oplus g_{st})$.  

In 1993, Noronha \cite{MR1235219} asks whether, under the additional assumption that $\pi_2(M^4) \neq 0$, the universal cover of $M^4$ is $(\mathbb{H}^2 \times S^2, g_{H} \oplus g_{st})$.  She shows that if $b_2(M^4) > 0$, then $M^4$ is covered by $(\mathbb{H}^2 \times S^2, g_{H} \oplus g_{st})$ using Bochner-type formulas.  We provide a positive answer to Noronha's question in the following.

\begin{theoremC}[Theorem \ref{the universal cover}]\label{D}
Let \( (M^4, g) \) be a closed, locally conformally flat, and scalar–flat \(4\)-manifold with \( \pi_2(M^4) \neq 0 \).  Then its Riemannian universal cover \( (\widetilde{M}, \tilde{g}) \) is isometric to $(\mathbb{H}^2  \times S^2 ,  g_{H} \oplus g_{st})$ up to homothety.
\end{theoremC}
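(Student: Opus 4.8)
The plan is to exploit the observation already recorded above: a locally conformally flat metric with constant scalar curvature has harmonic curvature. Since $\mathrm{Sc}_g\equiv 0$ is constant and the Weyl tensor vanishes, the Cotton tensor vanishes, so the Schouten tensor is a Codazzi tensor, and because $\mathrm{Sc}_g$ is constant this means $\mathrm{Ric}_g$ itself is a Codazzi tensor (equivalently the Levi–Civita connection realizes the absolute minimum of the Yang–Mills functional, exactly the fact mentioned in the introduction). Passing to the Riemannian universal cover, $(\widetilde M,\tilde g)$ is complete, simply connected, locally conformally flat and scalar-flat, with $\mathrm{Ric}_{\tilde g}$ a Codazzi tensor, and $\pi_2(\widetilde M)=\pi_2(M)\neq 0$.

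Next I would run the structure theory for Codazzi tensors on conformally flat manifolds in the spirit of Derdzinski–Shen. On the open dense set where the number of distinct eigenvalues of $\mathrm{Ric}_{\tilde g}$ is locally constant, each eigendistribution of multiplicity $\ge 2$ is integrable with totally umbilic leaves of constant curvature and with its eigenfunction constant along those leaves, while the one-dimensional eigendistributions are autoparallel; since conformal flatness makes the full curvature algebraically determined by $\mathrm{Ric}_{\tilde g}$ and $\mathrm{Sc}_{\tilde g}=0$, this presents $(\widetilde M,\tilde g)$ locally as a (possibly multiply) warped product of space forms over a low-dimensional base. I would then run through the possible multiplicity profiles $(m_1,\dots)$ of $\mathrm{Ric}_{\tilde g}$ with $\sum m_i=4$. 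In the Einstein profile $(4)$, scalar-flatness forces $\tilde g$ flat, so $\widetilde M=\mathbb{E}^4$ and $\pi_2=0$; in the profile $(3,1)$ one gets (after globalizing) a warped product $\mathbb{R}\times_\varphi F^3$ with $F^3$ a simply connected space form, again with $\pi_2=0$, and similarly for any profile containing a one-dimensional factor. This leaves the profile $(2,2)$: here the two Ricci eigenvalues are $\lambda$ and $-\lambda$ by scalar-flatness, the Derdzinski–Shen description forces $\lambda$ to be a (globally) constant $c$, and $c=0$ is excluded as it reproduces the flat $\mathbb{E}^4$; the umbilicity factor of each eigendistribution is controlled by derivatives of the eigenfunctions and hence vanishes, so both two-dimensional eigendistributions are totally geodesic, integrable, and complementary, i.e. parallel. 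By the de Rham decomposition theorem $(\widetilde M,\tilde g)$ splits globally as a Riemannian product $N_1^2\times N_2^2$ of complete simply connected surfaces, which by conformal flatness and $\mathrm{Sc}_{\tilde g}=0$ carry constant curvatures $c$ and $-c$ with $c\neq 0$; this is precisely $(\mathbb{H}^2\times S^2,\,g_H\oplus g_{st})$ up to homothety.

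The main obstacle is the case analysis of the previous paragraph: establishing the local warped-product/product structure in the precise form needed, globalizing it on the complete simply connected cover $\widetilde M$ (so that a local warped product of space forms becomes a genuine global one with a simply connected space-form fibre), and checking carefully that every profile other than $(2,2)$ with $c\neq0$ is incompatible with $\pi_2\neq 0$ — the delicate point being to show that no complete, genuinely non-product, conformally flat, scalar-flat warped-product model (in particular none with a round $S^2$ factor) survives, which at bottom is an ODE rigidity statement for the warping functions forced by completeness. Here it is useful that a metric with harmonic curvature is real-analytic in harmonic coordinates, so that local constancy of the Ricci eigenvalues and the local splitting propagate across the lower-dimensional bad set. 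As a cross-check and possible alternative, one can instead invoke the Schoen–Yau developing map: $\widetilde M$ embeds conformally onto $S^4\setminus\Lambda$ with $\dim_H\Lambda\le 1$; Alexander duality converts $\pi_2(\widetilde M)\neq0$ into $\check H^1(\Lambda)\neq0$, forcing $\dim_H\Lambda=1$, and a rigidity theorem for Kleinian groups of critical exponent $1$ identifies $\Lambda$ with a round circle, so $\widetilde M$ is conformal to $\mathbb{H}^2\times S^2$; a short uniqueness argument on the closed quotient — the conformal factor between two scalar-flat metrics is harmonic, hence constant — then upgrades this to an isometry up to homothety.
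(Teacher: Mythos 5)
Your primary route (constant scalar curvature plus local conformal flatness gives harmonic curvature, so $\mathrm{Ric}$ is a Codazzi tensor, then Derdzinski--Shen eigendistribution analysis) is not a proof as it stands, because the decisive steps are exactly the ones you defer. The case analysis of multiplicity profiles is incomplete: you dismiss ``any profile containing a one-dimensional factor'' as incompatible with $\pi_2\neq 0$, but a profile such as $(2,1,1)$ corresponds locally to a warped product over a two-dimensional base with totally umbilic round $S^2$ leaves, whose complete simply connected models are diffeomorphic to $\mathbb{R}^2\times S^2$ and do have $\pi_2\neq 0$; excluding all complete non-product conformally flat scalar-flat warped models of this kind is precisely the ``ODE rigidity statement'' you yourself name as the main obstacle and do not prove. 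Similarly, the passage from the open dense set where the multiplicity profile is locally constant to a global de Rham splitting of the complete cover $\widetilde{M}$ is asserted rather than established (real-analyticity in harmonic coordinates does not by itself yield the global product structure). The part you do argue, that in the $(2,2)$ profile the eigenvalue is locally constant (scalar-flatness makes the two eigenvalues opposite, and each is constant along the leaves of its own distribution, hence along both) and the distributions are parallel, is fine, but it is the easy part of that strategy.

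Your ``cross-check'' sketch is essentially the paper's actual proof (Schoen--Yau developing map, Alexander duality giving $\check{H}^1(\Lambda)\neq 0$, Hausdorff dimension one, Kleinian rigidity identifying $\Lambda$ with a round circle, then a conformal-to-isometric upgrade), but two hinges are missing as you state it. First, to invoke Patterson--Sullivan and Kapovich's round-sphere theorem one must know that $\Gamma$ is geometrically finite; the paper obtains this from the Chang--Qing--Yang criterion ($\dim_{\mathcal H}\Lambda<n$ for a finitely generated, conformally finite Kleinian group), and without that input ``a rigidity theorem for Kleinian groups of critical exponent $1$'' is not available. Second, the final step is not simply ``the conformal factor between two scalar-flat metrics is harmonic, hence constant'': the factor lives on the noncompact cover $\mathbb{H}^2\times S^2$, where positive harmonic functions need not be constant, and to run the integration argument on the closed quotient one must first show the factor is invariant under the deck group. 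The paper does this via the theorem of Jimenez--Tojeiro that every conformal diffeomorphism of $\mathbb{H}^{k}\times S^{\ell}$ is an isometry, which forces the conformal factor to descend to $M$, after which integrating the resulting equation kills its gradient. So as written both of your routes have genuine gaps; the second is repairable along the paper's lines, while the first would require substantial additional work (globalization and the completeness/ODE rigidity for warped models) that the paper's method avoids.
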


Using the Liouville theorem (Theorem \ref{Liouville theorem}) of Schoen and Yau \cite{MR4836036} together with the theorem of Chang–Qing–Yang \cite{MR2070141}, we show that $\pi_1(M^{4})$, viewed as a Kleinian group $\Gamma$, is geometrically finite.  
It then follows from the Patterson–Sullivan theorem ~\cite{zbMATH03903608} and Nayatani's theorem \cite{zbMATH01028179} that the critical exponent satisfies 
$\delta(\Gamma) = \dim_{\mathcal{H}}(\Lambda(\Gamma)) = 2.$  
The condition \( \pi_2(M^4) \neq 0 \)  further implies, via Alexander duality, that the topological dimension satisfies $\dim(\Lambda(\Gamma)) \geq 2$.  
Finally, by Kapovich's theorem \cite{MR2491697}, $\Lambda(\Gamma)$ is embedded as a round $2$-sphere, which completes the proof. The proof also holds for manifolds $M^{2n}$ ($n \geq 2$) if one replaces the condition $\pi_2(M^4) \neq 0$ by $\widetilde{H}_n(\widetilde{M}; \mathbb{Z}) \neq 0$.

\paragraph*{Organization of the Paper.}
In Section~\ref{2}, we partially verify Schoen's conjecture in dimension~4. In Section~\ref{3}, we  characterize Riemannian $4$–manifolds with positive scalar curvature that arise as the total space of a fiber bundle. In Section~\ref{4}, results and questions related to the rigidity of negative scalar curvature are  discussed. In Section~\ref{5},  we  characterize the universal cover of locally conformally flat scalar-flat $2n$–manifolds under an additional condition.

\paragraph*{Acknowledgment.}          
The author acknowledges support from the Oberwolfach Leibniz Fellows programme (MFO), the YMSC Overseas Shuimu Scholarship, the Simons Center for Geometry and Physics, and ICMS Edinburgh (workshops on Geometric Measure Theory on Metric Spaces with Applications to Physics and Geometry and Geometric Moduli Spaces, respectively). I  thank Gerhard Huisken for discussions on the Ricci flow. This work originates from a broader project initiated during my postdoctoral stay at the Yau Center. During that time, this work was also supported by NSFC  12401063 and partially by  NSFC  12271284. I am deeply grateful to Shing-Tung Yau and Akito Futaki for their trust and support, which allowed me to pursue independent research. 

\section{Negative scalar curvature and smooth invariants}\label{2}

The Bishop–Gromov volume comparison theorem for Ricci curvature bounded below has many applications.  
It is natural to seek analogous volume comparison results for scalar curvature bounded below.  
For the case of negative scalar curvature, we obtain the following analogous result within a conformal class, which also constitutes the first step in our approach to Schoen's conjecture.

\begin{proposition} \label{ncsc}
Let $(M^n,g)$ ($n\geq 3$)  be a closed, connected Riemannian manifold with constant negative scalar curvature  $\mathrm{Sc}_g \equiv -R<0$. If $g'$ is a conformal  metric of $g$ with $\mathrm{Sc}_{g'}\geq -R$,  then $\mathrm{Vol}_{g'}(M)\geq \mathrm{Vol}_g(M)$. If $\mathrm{Vol}_{g'}(M)=\mathrm{Vol}_g(M)$, then the metric $g'$ is isometric to the metric $g$, $g' \equiv g$.
\end{proposition}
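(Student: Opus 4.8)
The plan is to reduce the statement to a maximum-principle argument for the conformal factor. Since $n\ge 3$, write $g' = u^{4/(n-2)}g$ with $u\in C^\infty(M)$, $u>0$. The conformal transformation law for the scalar curvature then reads
\begin{equation*}
-\frac{4(n-1)}{n-2}\,\Delta_g u + \mathrm{Sc}_g\,u \;=\; \mathrm{Sc}_{g'}\,u^{\frac{n+2}{n-2}},
\end{equation*}
where $\Delta_g = \operatorname{div}_g\nabla_g$ is the Laplace--Beltrami operator, so that $\Delta_g u\ge 0$ at an interior minimum of $u$. Substituting $\mathrm{Sc}_g\equiv -R$ and the hypothesis $\mathrm{Sc}_{g'}\ge -R$ yields the differential inequality
\begin{equation*}
-\frac{4(n-1)}{n-2}\,\Delta_g u \;\ge\; R\,u\bigl(1-u^{\frac{4}{n-2}}\bigr)\quad\text{on }M.
\end{equation*}

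First I would prove $u\ge 1$ everywhere. Since $M$ is closed, $u$ attains its minimum at some $x_0\in M$; there $\Delta_g u(x_0)\ge 0$, so the left-hand side above is $\le 0$ at $x_0$, and because $R>0$ and $u(x_0)>0$ this forces $u(x_0)^{4/(n-2)}\ge 1$, i.e.\ $u\ge u(x_0)\ge 1$ on all of $M$. Recalling that $dV_{g'} = u^{2n/(n-2)}\,dV_g$, it follows at once that
\begin{equation*}
\mathrm{Vol}_{g'}(M) = \int_M u^{\frac{2n}{n-2}}\,dV_g \;\ge\; \int_M dV_g = \mathrm{Vol}_g(M).
\end{equation*}

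For the rigidity clause, suppose $\mathrm{Vol}_{g'}(M)=\mathrm{Vol}_g(M)$. Then $\int_M\bigl(u^{2n/(n-2)}-1\bigr)\,dV_g = 0$ with a continuous, nonnegative integrand, hence $u\equiv 1$ and therefore $g'=g$; in particular the two metrics are isometric. (Alternatively, once $u\ge 1$ is known, the displayed inequality can be rewritten, using the mean value theorem applied to $t\mapsto t^{4/(n-2)}$, in the form $\Delta_g(u-1)\le c(x)(u-1)$ with $c$ bounded and $u-1\ge 0$, and then the strong maximum principle forces $u\equiv 1$ as soon as $u$ touches the value $1$ at a single point.)

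I do not anticipate a genuine obstacle here: the argument uses only compactness of $M$, smoothness and positivity of $u$, and the elementary fact that $t\mapsto t^{4/(n-2)}$ is strictly increasing with value $1$ at $t=1$. The one place that requires care is the bookkeeping of sign conventions — for the Laplace--Beltrami operator and for the conformal Laplacian — so that the inequality obtained at the minimum point of $u$ is applied in the correct direction.
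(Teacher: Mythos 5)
Your proof is correct, but it takes a genuinely different route from the paper's. The paper writes $g'=e^{2f}g$, integrates the resulting differential inequality over the closed manifold (the Laplacian term integrates to zero and the gradient term is discarded) to obtain $\int_M e^{2f}\,dv_g\ge \mathrm{Vol}_g(M)$, and then applies H\"older's inequality to pass from $\int_M e^{2f}\,dv_g$ to $\mathrm{Vol}_{g'}(M)=\int_M e^{nf}\,dv_g$; rigidity comes from the equality case of H\"older, which forces $f$ to be constant and then $f\equiv 0$. You instead run a pointwise maximum-principle argument at the minimum of the conformal factor $u$ in $g'=u^{4/(n-2)}g$, and your sign bookkeeping is right: with $\Delta_g=\operatorname{div}_g\nabla_g$ one has $\Delta_g u\ge 0$ at the minimum, which together with $\mathrm{Sc}_{g'}\ge -R=\mathrm{Sc}_g$ forces $u\ge 1$ everywhere, so the volume inequality and the rigidity (via $\int_M(u^{2n/(n-2)}-1)\,dv_g=0$) are immediate. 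Your route is essentially the same technique the paper itself uses for its Proposition on uniqueness of the constant-scalar-curvature conformal metric (Proposition 2.2), and it buys more: it proves the pointwise domination $g'\ge g$, of which the volume comparison is only a corollary, and it makes the equality case trivial without invoking the equality discussion in H\"older. What the paper's integral-plus-H\"older argument buys is that it never touches the maximum principle and stays entirely at the level of integrated quantities, which is closer in spirit to Yamabe-functional estimates; but as a proof of the stated proposition both arguments are complete, and yours is the more elementary and the stronger one. The parenthetical strong-maximum-principle alternative you sketch is fine but unnecessary, since the integral identity already gives $u\equiv 1$.
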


\begin{proof}
Let $g'=e^{2f}g$, where $f\in C^{\infty}(M)$. Then, the scalar curvature of $g'$ is 
\begin{equation*}
\mathrm{Sc}_{g'}=e^{-2f}(Sc_g+2(n-1)\Delta_g f -(n-2)(n-1)|\nabla f|_g^2).
\end{equation*}
 In this paper, we denote \( \Delta_g = -g^{ij} \nabla_i \nabla_j \) as the (positive-spectrum) Laplacian of \( (M,g) \) and \( \nabla \) as the Levi-Civita connection of \( (M,g) \).  The conditions of $\mathrm{Sc}_g=-R$ and  $\mathrm{Sc}_{g'}\geq -R$ imply that 
\begin{equation*}
2(n-1)\Delta_g f -(n-2)(n-1)|\nabla f|_g^2 \geq -R(e^{2f}-1).
\end{equation*}
That means
\begin{equation} \label{1}
-\Delta_g f +\frac{n-2}{2}|\nabla f|_g^2 \leq \frac{R}{2(n-1)}(e^{2f}-1).
\end{equation}
Since $M^n$ is closed and $R>0$, then, after integrating the inequality (\ref{1}), we have 
\begin{equation*}
\int_M e^{2f}dv_g\geq \mathrm{Vol}(M,g).
\end{equation*}
Since $n\geq 3$, we can use  H\"older's inequality to obtain the following inequality:
\begin{equation*}
\int_M e^{2f}\times 1 dv_g \leq (\int_M e^{2f\times \frac{n}{2}}dv_g)^{\frac{2}{n}}(\int_M 1^{\frac{n}{n-2}}dv_g)^{\frac{n-2}{n}}.
\end{equation*}
Thus, we have 
\begin{equation*}
[\mathrm{Vol}(M,g'):=\int_M e^{2f\times \frac{n}{2}}dv_g]^{\frac{2}{n}} \geq \frac{\int_M e^{2f}dv_g}{\mathrm{Vol}(M,g)^{\frac{n-2}{n}}}\geq \mathrm{Vol}(M,g)^{\frac{2}{n}}
\end{equation*}
Therefore, we have $\mathrm{Vol}(M,g')\geq \mathrm{Vol}(M,g)$.

 Equality of above H\"older's inequality holds if and only if $e^{nf}=C$ almost everywhere on $M$ for some constant $C$. Thus, $f$ is constant almost everywhere on $M$. Since $f\in C^{\infty}(M)$ and $M$ is connected, $f$ is constant  on $M$. As $\mathrm{Vol}(M,g')=\mathrm{Vol}(M,g)$, we have $f\equiv 0$ on $M$. That means the metric $g'$ is isometric to the metric $g$.
\end{proof}

Let $S(g)$ be the normalized Hilbert-Einstein functional, 
\begin{equation*}
S(g):=\frac{1}{\mathrm{Vol}_g(M^n)^\frac{n-2}{n}}\int_{M^n} \mathrm{Sc}_gdv_g.
\end{equation*}
Let $[g]$ be the conformal class of $g$. The Yamabe constant $ Y(M^n,[g])$ of $(M^n,g)$ ($n\geq 3$)  is given by
\begin{equation*}
Y(M^n,[g]):= \inf_{g'\in[g]} S(g').
\end{equation*}
  The existence of a metric in its conformal class that realizes the infimum is guaranteed by Yamabe-Trudinger-Aubin-Schoen theorem. Computing the first variation of \( S(g) \) shows that \( g' \in [g] \) is a critical point of \( S(g) \) within the conformal class \( [g] \) if and only if the scalar curvature of \( g' \) is constant. A metric that attains the infimum in the Yamabe problem within a conformal class is called a Yamabe metric. If $Y(M^n, [g])>0$, the Yamabe metric may not be unique up to scaling.  On the other hand, if  $ Y(M^n,[g])\leq 0$, then the Yamabe metric $g'$ is unique up to a scaling factor.

\begin{proposition}\label{Yamabe}
Let $g'$ be a conformal  metric of $g$ with $\mathrm{Sc}_{g'}=\mathrm{Sc}_{g} \equiv -R < 0$ ($R>0$) on a closed $n$-manifold ($n\geq 3$), then $g'$ is isometric to  $g$.
\end{proposition}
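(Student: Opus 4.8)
The plan is to deduce this as a two-sided application of Proposition~\ref{ncsc}. Since $g$ has constant negative scalar curvature $\mathrm{Sc}_g \equiv -R$ and $g'$ is a conformal metric with $\mathrm{Sc}_{g'} \equiv -R \ge -R$, Proposition~\ref{ncsc} gives $\mathrm{Vol}_{g'}(M) \ge \mathrm{Vol}_g(M)$. Now I reverse the roles: $g'$ also has constant negative scalar curvature $\mathrm{Sc}_{g'} \equiv -R$, and $g$ is a conformal metric with $\mathrm{Sc}_g \equiv -R \ge -R$, so the same proposition applied to the pair $(g', g)$ yields $\mathrm{Vol}_g(M) \ge \mathrm{Vol}_{g'}(M)$. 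Combining the two inequalities forces $\mathrm{Vol}_{g'}(M) = \mathrm{Vol}_g(M)$, and the equality case of Proposition~\ref{ncsc} then gives $g' \equiv g$, i.e.\ $g'$ is isometric to $g$. There is essentially no obstacle here beyond noticing that the hypotheses are symmetric in $g$ and $g'$; the conformal factor being smooth on a closed connected manifold is exactly what Proposition~\ref{ncsc} already requires.

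Alternatively, and perhaps worth recording as a remark, one can argue through uniqueness of Yamabe metrics. Writing $g'=e^{2f}g$, compute $S(g) = -R\,\mathrm{Vol}_g(M)^{2/n} < 0$, hence $Y(M^n,[g]) \le S(g) < 0$. Since $Y(M^n,[g]) < 0$, both $g$ and $g'$ are constant-scalar-curvature metrics in the conformal class $[g]$ and therefore both realize the Yamabe infimum, which is unique up to scaling; thus $g' = c\,g$ for some constant $c>0$. The scalar curvature then transforms by $\mathrm{Sc}_{g'} = c^{-1}\mathrm{Sc}_g$, so $-R = c^{-1}(-R)$ forces $c=1$ and $g'=g$. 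The only subtlety in this route is making sure the normalization of the scalar curvatures (both equal to $-R$, not merely proportional) is used to eliminate the scaling freedom.

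I would present the first argument as the proof, since it is self-contained given Proposition~\ref{ncsc} and does not invoke the existence/uniqueness theory for the Yamabe problem. The main point to state carefully is that the equality discussion in Proposition~\ref{ncsc} already yields the rigidity conclusion $f \equiv 0$, so no further work is needed once the volumes are shown to coincide.
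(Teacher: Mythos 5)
Your main argument is correct, but it takes a genuinely different route from the paper. The paper proves the proposition directly: writing $g'=u^{4/(n-2)}g$, it derives the Yamabe-type equation $\Delta_g u=\tfrac{n-2}{4(n-1)}Ru\bigl(1-u^{4/(n-2)}\bigr)$ and applies the maximum principle at a maximum and a minimum point of $u$ to get $u\le 1$ and $u\ge 1$, hence $u\equiv 1$. You instead exploit the symmetry of the hypotheses and apply Proposition~\ref{ncsc} twice, once with $g$ as the constant-curvature base metric and once with $g'$, to force $\mathrm{Vol}_{g'}(M)=\mathrm{Vol}_g(M)$, and then invoke the equality (rigidity) clause of Proposition~\ref{ncsc}. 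This is legitimate and not circular, since Proposition~\ref{ncsc} precedes the statement in the paper; it buys brevity and avoids any new PDE analysis. What the paper's pointwise argument buys in exchange is self-containedness: it does not depend on the rigidity case of Proposition~\ref{ncsc} (which rests on the H\"older equality analysis and the connectedness hypothesis that Proposition~\ref{ncsc} formally carries but the present statement does not explicitly state), and the maximum-principle proof works verbatim on each component. If you use your route, you should either note that connectedness is implicitly assumed or observe that Proposition~\ref{ncsc} extends componentwise. As for your alternative remark via uniqueness of the Yamabe metric when $Y(M,[g])\le 0$: be aware that within this paper that uniqueness is only asserted, and its standard proof is exactly the maximum-principle argument of this proposition, so presenting that route as the proof would be essentially circular; you were right to relegate it to a remark and to lead with the two-sided application of Proposition~\ref{ncsc}.
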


\begin{proof}
Let $g'=u^{\frac{4}{n-2}}g$ for some $0<u\in C^{\infty}(M)$.     Though the condition of $\mathrm{Sc}_{g'}=Sc_{g} \equiv R$,  we have 
\begin{align*}
\frac{4(n-1)}{(n-2)}\Delta_{g}u=\mathrm{Sc}_{g'}u^{\frac{n+2}{n-2}}-\mathrm{Sc}_{g}u=-R u(u^{\frac{4}{n-2}}-1).                                  
\end{align*}
 That is 
\begin{align*}
\Delta_{g}u=\frac{(n-2)}{4(n-1)} R u(1-u^{\frac{4}{n-2}}).                                  
\end{align*}

Since \( M \) is closed, \( u \) attains a maximum at some point \( p \in M \). Thus, 
\begin{align*}
\Delta_{g}u(p)=\frac{(n-2)}{4(n-1)} Ru(p)[1-u^{\frac{4}{n-2}}(p)]\leq 0.                                  
\end{align*}
Furthermore, we have $R>0$ and $u>0$. Therefore,   \( u(p) \leq 1 \) for all $p\in M$. Thus, we have $u\leq 1$. On the other hand, \( u \) attains a minimum at some point \( q \in M \) such that we have $u\geq 1$. That means $g' \equiv g$.
\end{proof}

 In addition, if $Y(M^n,[g])\leq 0$ ($n\geq 3$),  then for a $g'\in [g]$, we have 
\begin{equation*}
(\min \mathrm{Sc}_{g'})\mathrm{Vol}_{g'}(M)^{\frac{2}{n}}\leq Y(M,[g])\leq (\max  \mathrm{Sc}_{g'})\mathrm{Vol}_{g'}(M)^{\frac{2}{n}}.
\end{equation*}
Each of the two equalities implies that $\mathrm{Sc}_{g'}$ is constant. Hence, if $Sc_{g'}=-n(n-1)$, then
\begin{equation*}
\mathrm{Vol}_{g'}(M)=(\frac{ Y(M,[g])}{-n(n-1)})^\frac{n}{2}.
\end{equation*}
Thus, we  have \[ \sup_{g'\in[g]}(\min \mathrm{Sc}_{g'})\mathrm{Vol}_{g'}(M)^{\frac{2}{n}}=Y(M,[g]).\] This means that any metric \( g' \) with constant non-positive scalar curvature is a Yamabe metric in its conformal class \( [g] \), provided that \( Y(M, [g]) \leq 0 \). However, if $n\geq 3$ and $Y(M^n, [g])>0$, then \[ \sup_{g'\in[g]}(\min \mathrm{Sc}_{g'})\mathrm{Vol}_{g'}(M)^{\frac{2}{n}}=+\infty. \]

For $Y(M,[g])\leq 0$, assume $[g]$  contains an Einstein metric $g_E$. Then, if $g'\in[g]$ has constant scalar curvature and $\mathrm{Vol}_{g'}(M)=\mathrm{Vol}_{g_E}(M)$, then  $g'$ is isometric to the Einstein metric $g_E$.

The Yamabe invariant of a closed smooth $n$-manifold $M^n$ ($n\geq 3$) is defined as 
\[Y(M^n):= \sup_{[g]} Y(M^n,[g]),\] where the supermum is taken over all conformal classes of smooth metrics on $M^n$. Since $Y(S^n)=Y(S^n,[g_{st}])$ and  $Y(T^n)=0$ for a $n$-torus $T^n$,  it is natural to ask whether the hyperbolic metric achieves  $Y(M^n)$ for a closed manifold $M^n$ admitting a hyperbolic metric.  Because a closed hyperbolic manifold $M^n$ carries no metrics with positive scalar curvature (See \cite{zbMATH07375613} and the references therein), its Yamabe invariant satisfies $Y(M^n)\leq 0$. Let $X^n$ be a closed $n$-manifold ($n\geq 3$) with $Y(X^n)\leq 0$, then, we  have \[ \sup_{g}(\min \mathrm{Sc}_{g})\mathrm{Vol}_{g}(X^n)^{\frac{2}{n}}=Y(X^n)\leq 0,\]
where $g$ runs over all smooth Riemannian metrics on $X^n$. Furthermore, if $Y(X^n)=Y(X^n,[g])$ for some $[g]$, then $[g]$ contains an Einstein metric $g_E$, and with this metric $Y(X^n)=Y(X^n,[g])=\mathrm{Sc}_{g_E}\mathrm{Vol}_{g_E}(M)^{2/n}.$

While studying the Yamabe invariant, Schoen conjectures in \cite{MR994021} that if a Riemannian metric $g$ on a  closed hyperbolic $n$-manifold  ($M^n,g_H$) ($n\geq 3$) satisfies $\mathrm{Sc}_g\geq -n(n-1)$,   then, $\mathrm{Vol}_g(M)\geq \mathrm{Vol}_{g_H}(M)$.  If Schoen’s conjecture were true, then  its Yamabe invariant would actually be attained by the hyperbolic metric. Since \( Y(M^n, [g']) < 0 \), the Yamabe–Trudinger–Aubin–Schoen theorem guarantees that any Riemannian metric \( g' \) on \( M^n \) can be conformally deformed to a unique metric \( g \) with constant scalar curvature \( -n(n-1) \). In particular,
$Y(M^n, [g']) = -n(n-1)\left( \mathrm{Vol}(M, g) \right)^{2/n}$. If Schoen’s conjecture holds, then for every conformal class \( [g'] \) on \( M^n \), we have $Y(M^n, [g']) \leq Y(M^n, [g_H])$.  Therefore, the Yamabe invariant \( Y(M^n) \) is indeed realized by the hyperbolic metric. 

However, whether there exists a closed hyperbolic \( n \)-manifold \( M^n \) with \( Y(M^n) < 0 \) for \( n \geq 4 \) is not known in the literature. Furthermore, even if \( Y(M^n) < 0 \) can be verified, it is not clear to the author whether the inequality \( Y(M^n \# N^n) < 0 \) holds for an arbitrary closed smooth \( n \)-manifold \( N^n \), although it is known that \( Y(M^n \# N^n) \leq 0 \).

Studying  critical points  of  several functional on the space of Riemannian metrics,  Besson-Courtois-Gallot \cite{zbMATH04192565} show that  if $g$ is $C^2$-close enough to $g_H$, then  the condition  $\mathrm{Sc}_g\geq -n(n-1)$ implies $\mathrm{Vol}_g(M^n)\geq \mathrm{Vol}_{g_H}(M^n)$, and  the equality holds if and only if $g$ is isometric to $g_H$.  Later, using the barycenter method to study 
$\min_g \mathrm{Ent}(M^n, g)\mathrm{Vol}_g(M^n)^{\frac{1}{n}}$, where \( g \) runs over metrics satisfying \( \mathrm{Ric}_g \geq -(n-1) \) and \( \mathrm{Ent}(M^n, g) \) is the volume entropy of \( (M^n, g) \), they \cite{zbMATH00847576} show that the hyperbolic metric achieves this minimum and the minimizer is unique up to isometry. The condition \( \mathrm{Ric}_g \geq -(n-1) \) provides the Bishop-Gromov inequality, which ensures \( \mathrm{Ent}(M^n, g) \leq n-1 \). Consequently, we have \( \mathrm{Vol}_g(M^n) \geq \mathrm{Vol}_{g_H}(M^n) \)  and  the equality holds if and only if $g$ is isometric to $g_H$.

Thus, Proposition \ref{ncsc} implies that if a metric \( g \) satisfying \( \mathrm{Sc}_g \geq -n(n-1) \) can be conformally deformed to an Einstein metric \( g_E \) with \( \mathrm{Ric}_{g_E} = -(n-1)\), then \( \mathrm{Vol}_g(M^n) \geq  \mathrm{Vol}_{g_E}(M^n)  \geq \mathrm{Vol}_{g_H}(M^n) \).

\begin{proposition} \label{attained}
Let $(M^n,g_H)$ be a closed hyperbolic manifold. If there exists a conformal class \( [g] \) that attains \( Y(M^n) \), then the hyperbolic metric $g_H$  belongs to  the conformal class $[g]$.
\end{proposition}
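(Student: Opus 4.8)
\textit{Proof proposal.} The plan is to reduce the statement to the rigidity of the hyperbolic metric among negatively--Einstein metrics, for which we invoke the Besson--Courtois--Gallot volume inequality recalled above. Suppose $[g]$ attains $Y(M^n)$; implicitly $n\ge 3$, which is exactly the range in which the Yamabe invariant, Mostow rigidity and the Besson--Courtois--Gallot theorem are all available. Since $M^n$ carries no metric of positive scalar curvature, we have $Y(M^n)\le 0$, and by the fact recorded earlier in this section, the conformal class $[g]$ contains an Einstein metric $g_E$ with
\[
Y(M^n)=Y(M^n,[g])=\mathrm{Sc}_{g_E}\,\mathrm{Vol}_{g_E}(M^n)^{2/n},
\]
so in particular $\mathrm{Sc}_{g_E}\le 0$.

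The first real step is to exclude the scalar--flat case. If $\mathrm{Sc}_{g_E}=0$, then $g_E$ is Einstein with vanishing scalar curvature, hence Ricci--flat; but a closed hyperbolic manifold admits no Ricci--flat metric, since its fundamental group is non--elementary word--hyperbolic (in particular of exponential growth), whereas the fundamental group of a closed Ricci--flat manifold is virtually abelian by the Cheeger--Gromoll splitting theorem together with Bieberbach's theorems. Hence $\mathrm{Sc}_{g_E}<0$, and after rescaling $g_E$ by a positive constant --- which does not change $[g]$ --- we may assume $\mathrm{Ric}_{g_E}=-(n-1)g_E$, i.e. $\mathrm{Sc}_{g_E}=-n(n-1)$, so that $Y(M^n)=-n(n-1)\,\mathrm{Vol}_{g_E}(M^n)^{2/n}$.

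The second step is a two--sided volume comparison. On the one hand, the hyperbolic metric $g_H$ has constant scalar curvature $-n(n-1)<0$ and $Y(M^n,[g_H])\le Y(M^n)\le 0$, so $g_H$ is a Yamabe metric in its own conformal class and $Y(M^n,[g_H])=-n(n-1)\,\mathrm{Vol}_{g_H}(M^n)^{2/n}$; since $Y(M^n)=\sup_{[g']}Y(M^n,[g'])\ge Y(M^n,[g_H])$, this forces $\mathrm{Vol}_{g_E}(M^n)\le \mathrm{Vol}_{g_H}(M^n)$. On the other hand, $g_E$ satisfies $\mathrm{Ric}_{g_E}\ge -(n-1)g_E$, so the Besson--Courtois--Gallot inequality gives $\mathrm{Vol}_{g_E}(M^n)\ge \mathrm{Vol}_{g_H}(M^n)$, with equality if and only if $g_E$ is isometric to $g_H$. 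Combining the two inequalities yields $\mathrm{Vol}_{g_E}(M^n)=\mathrm{Vol}_{g_H}(M^n)$, hence $g_E$ is isometric to $g_H$; in particular $g_E\in[g]$ is a hyperbolic metric, and by Mostow rigidity this is, up to isometry, the hyperbolic metric $g_H$, so $g_H\in[g]$.

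The only ingredient that is not pure bookkeeping with the Yamabe functional and the already--quoted Besson--Courtois--Gallot rigidity is the exclusion of the Ricci--flat alternative in the second paragraph, i.e. the classical fact that a closed hyperbolic manifold carries no Ricci--flat metric; this is where I expect the only genuine care to be needed, precisely because it is not known whether $Y(M^n)<0$ for $n\ge 4$, so one cannot simply dismiss the case $\mathrm{Sc}_{g_E}=0$ a priori.
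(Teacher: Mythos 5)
Your proposal is correct and takes essentially the same route as the paper: attainment of $Y(M^n)\le 0$ yields an Einstein metric $g_E\in[g]$, the scalar-flat case is excluded via Cheeger--Gromoll, and after normalizing $\mathrm{Ric}_{g_E}=-(n-1)g_E$ the Besson--Courtois--Gallot inequality together with the comparison of Yamabe constants forces $[g]=[g_H]$. The only cosmetic differences are that you rule out the Ricci-flat alternative through the growth/virtual abelianness of $\pi_1$ (Cheeger--Gromoll plus Bieberbach) instead of the paper's splitting-to-flat contradiction, and that you make explicit the two-sided volume comparison and the BCG equality-rigidity step that the paper leaves terse.
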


\begin{proof}
Since \( Y(M^n) \leq 0 \), we first consider the case \( Y(M^n) = 0 \). In this case, the Yamabe invariant is not attained; that is, the supremum is not realized by any conformal metric of $g$.
 Indeed, suppose there exists a scalar-flat metric \( \hat{g} \) in some conformal class on \( M^n \), then \( \hat{g} \) must be Ricci-flat. By the Cheeger--Gromoll splitting theorem, the universal cover of \( (M^n, \hat{g}) \) endowed with the lifted metric must be isometric to Euclidean space. In particular, \( \hat{g} \) must be flat. However, since \( M^n \) is closed, it cannot simultaneously admit both a flat metric and a hyperbolic metric \( g_H \), yielding a contradiction. Thus, for a closed manifold \( M^n \) admitting a hyperbolic metric, the Yamabe constant satisfies \( Y(M^n, [g]) < 0 \) for every conformal class \([g]\). 

 Hence, the existence of a conformal class \( [g] \) attaining \( Y(M^n) \) implies that \( Y(M^n) < 0 \). By Proposition~\ref{Yamabe},    the constant negative scalar curvature metric in \( [g] \) is unique (up to rescaling), and the attainment of the Yamabe invariant implies that this metric is Einstein, denoted \( g_E \). The Einstein constant must be negative, and by rescaling we may assume \( \mathrm{Ric}_{g_E} = -(n-1)g_E \). In this case,  $\mathrm{Vol}_{g_E}(M^n) \geq \mathrm{Vol}_{g_H}(M^n)$. Consequently, we obtain $Y(M^n, [g]) \leq Y(M^n, [g_H])$. Since \( [g] \) attains the Yamabe invariant, it follows that \( [g] = [g_H] \).

\end{proof}

\begin{remark}
If Schoen's conjecture holds, then Proposition \ref{attained} implies that if the metric $g$ on  $(M^n,g_H)$ satisfies $\mathrm{Sc}_g=-n(n-1)$ and $\mathrm{Vol}_g(M^n)=\mathrm{Vol}_{g_H}(M^n)$, then the metric $g$ is isometric to the hyperbolic metric $g_H$.
\end{remark}

 However, Besson-Courtois-Gallot's barycenter method does not extend to the condition \( \mathrm{Sc}_g \geq -n(n-1) \). Since Kazaras–Song–Xu \cite{2023arXiv231200138K}  show that  there exist a closed hyperbolic $3$-manifold $M^3$ admitting a Riemannian metric $g$ with  $\mathrm{Sc}_g \geq -6$, has volume entropy of $g$ strictly larger than $2 $. 

In  dimension 3, Anderson \cite[Page 132]{MR2213687} considers Schoen's conjecture. 
He studies the metric invariant $S_{-}(g) := \mathrm{Sc}_{\min}(g)\, \mathrm{Vol}_g(M^3)^{2/3}$
on a closed $3$-manifold $(M^3,g)$, where $\mathrm{Sc}_{\min}(g) := \min_{x \in M^3} \mathrm{Sc}_g(x).$ 
Perelman's work implies that $S_{-}(g(t))$ is monotone non-decreasing in $t$ along the long-time solution $g(t)$ to the Ricci flow. Then, for a closed three-manifold $M^3$ admitting a hyperbolic metric $g_H$, Anderson's idea is to use Perelman’s monotonicity formula for the Ricci flow with
surgery  to show that $S_{-}(M^3) := \sup_g S_{-}(g) = -6\, \mathrm{Vol}_{g_H}(M^3)^{2/3},$
where the supremum is taken over all smooth metrics $g$ on $M^3$. Then, $\mathrm{Sc}_g\geq -6$ implies  $\mathrm{Vol}_{g}(M^3) \geq \mathrm{Vol}_{g_H}(M^3)$. 
  One application of the verified Schoen conjecture in three dimensions is Reiris’s new proof that an asymptotically flat Riemannian three-manifold with non-negative scalar curvature cannot have negative ADM mass \cite{zbMATH05607319}. It is not clear to the author whether the verification of Schoen's conjecture in higher dimensions can be used to establish the positivity of the ADM mass.

 \begin{remark}
For a closed $n$-dimensional manifold $M^n$ admitting a hyperbolic metric, the normalized Ricci flow
\[
\frac{\partial g(t)}{\partial t} = -2\, \mathrm{Ric}_{g(t)} + \frac{2}{n\, \mathrm{Vol}_{g(t)}(M^n)} \left( \int_{M^n} \mathrm{Sc}_{g(t)} \, dv_{g(t)} \right) g(t)
\]
preserves the volume of $M^n$, and the minimum of the scalar curvature, $\mathrm{Sc}_{\min}(g(t))$, is non-decreasing along the flow. We may rescale the initial metric to normalize the volume to one, allowing us to define
$S_{-}(g(t)) := \mathrm{Sc}_{\min}(g(t))$. Along the normalized Ricci flow, the quantity \( S_{-}(g(t)) \) is also non-decreasing. However, due to the lack of a general long-time existence result for the normalized Ricci flow in higher dimensions, extending Perelman's argument to this setting remains a significant challenge.

 \end{remark}

 Let $\mathcal{M}$ be the space of smooth Riemannian metrics on a closed, connected, and oriented $n$-manifold  $M^n$ ($n\geq 3$), then $\mathcal{M}$ is a smooth infinite-dimensional manifold. The space  $\mathcal{M}_{-1}:=\{g \in \mathcal{M}| \mathrm{Sc}_g=-1\}$ is a non-empty closed smooth infinite-dimensional sub-manifold of $\mathcal{M}$.  If $M^n$ ($n\geq 3$) admits a hyperbolic metric, the space $\mathcal{M}_{-1}$ is contractible.

Thus,   one may also consider the \emph{conformal Ricci flow} equations in a closed $n$-dimensional manifold $M^n$ with $n \geq 3$ admitting a hyperbolic metric:
\begin{align*}
\frac{\partial g(t)}{\partial t} + 2\left( \mathrm{Ric}_{g(t)} + \frac{1}{n}g(t) \right) &= -p(t)g(t), \\
\mathrm{Sc}_{g(t)} &= -1,
\end{align*}
where $p(t)\in C^{\infty}(M^n)$ for $t \in [0, T)$, $0<T\leq +\infty$. The existence of a short-time solution was shown by Fischer and Moncrief \cite[Theorem 4.1]{zbMATH02063627}.

 A metric $g$ with $\mathrm{Sc}_g=-1$ is an equilibrium point of the conformal Ricci flow equations if and only if $g$ is an Einstein metric with the negative Einstein constant $-1/n$. The conformal Ricci flow preserves the scalar curvature condition $\mathrm{Sc}_{g(t)} = -1$, and the volume is non-increasing along the flow. In particular, if the initial metric is not Einstein and the flow converge to an Einstein metric, then the volume $\mathrm{Vol}_{g_t}(M)$ decreases monotonically for all $0<t\leq t_0$ where $g_{t_0}$ is an Einstein metric. 
 
  \begin{remark}
 We propose the following strategy: Based on the above discussion, we may consider a non-Einstein metric \( g \) on \( M^n \) with scalar curvature \( \mathrm{Sc}_g = -n(n-1) \), and then rescale it so that its scalar curvature is $-1$. Using this as the initial metric, we can evolve the conformal Ricci flow. If the flow converges to an Einstein metric with Einstein constant \(-1/n \), then the volume decreases monotonically along the flow and converges to that of the Einstein metric. Rescaling both the initial metric and the limiting Einstein metric by the same factor to restore the original scalar curvature \( -n(n-1) \), we obtain
$\mathrm{Vol}_g(M^n) \geq \mathrm{Vol}_{g_H}(M^n)$.  However, the long-time existence of the flow remains an open problem, and the norm of the Riemann curvature tensor may blow up either in finite time or as time approaches infinity.
 \end{remark}

 For a closed smooth manifold $M^n$ ($n\geq 3$), Perelman also defines a differential-topological invariant,
 \begin{equation*}
 \bar{\lambda}(M)=\sup_g \lambda_g(\mathrm{Vol}_g(M))^{2/n},
 \end{equation*}
  where the supremum is taken over all smooth metrics $g$ on $M$ and  $\lambda_g$ is  the least eigenvalue  of the elliptic operator $4\Delta_g + \mathrm{Sc}_g$. Here  $\Delta_g = d^*d = -\nabla \cdot \nabla$ is the
positive-spectrum Laplace-Beltrami operator associated with $g$. In fact, $\lambda_g$  can also be defined using Rayleigh quotients as follows:
 \begin{equation*}
 \lambda_g:= \inf_{u\in C^{\infty}(M)} \frac{\int_M \mathrm{Sc}_g u^2+ 4|\nabla u|^2dv_g}{\int_Mu^2dv_g}.
 \end{equation*}

Perelman defines \( \bar{\lambda}(M^n) \) because he observes that whenever \( \lambda_{g_t} (\mathrm{Vol}_{g_t}(M))^{2/n} \leq 0 \), the scale-invariant quantity \(  \lambda_{g_t} (\mathrm{Vol}_{g_t}(M))^{2/n}  \) is non-decreasing under the Ricci flow \( g_t \).  For a $3$-manifold $M^3$  carrying no a metric of positive scalar curvature,  Perelman shows that  $-\bar{\lambda}(M^3)^{3/2}$ is proportional to the minimal volume of the manifold.  If $M^n$ ($n\geq 3$) admits a Riemannian metric with positive scalar curvature, then $\bar{\lambda}(M^n)=+\infty$.  If $Y(M^n)\leq 0$, Akutagawa, Ishida, and LeBrun \cite{zbMATH05130246} show that $\bar{\lambda}(M^n)=Y(M^n)$. 

On the other hand, for a connected closed  smooth $n$-manifold $M^n$,  we have the following smooth invariant 
\begin{equation*}
L(M^n)=\inf_g \int_{M^n} |\mathrm{Sc}_g|^{n/2}dv_g,
\end{equation*} 
 where the infimum is taken over all smooth Riemannian metrics $g$ on  $M^n$. The invariant \( L(M^n) \) can diverge to infinity for certain smooth, closed, simply connected 4-manifolds, while it vanishes for all closed, simply connected \( n \)-manifolds with \( n \geq 3 \) and \( n \neq 4 \).   If a manifold \( M^n \) ($n\geq 3$)  satisfies \( Y(M^n) \geq 0 \), then \( L(M^n)=0\).  However, if a manifold \( M^n \) ($n\geq 3$)  satisfies \( Y(M^n) \leq 0 \), then \( L(M^n) = |Y(M^n)|^{n/2} \).

For a closed manifold $M^n$ admitting a hyperbolic metric $g_H$,   confirming  Schoen's conjecture in higher dimensions,  namely 
\begin{equation*}
Y(M)=-n(n-1)(\mathrm{Vol}_{g_H}(M^n))^{2/n},
\end{equation*}
  implies that
 \begin{equation*}
 \bar{\lambda}(M^n)=-n(n-1)(\mathrm{Vol}_{g_H}(M^n))^{2/n}
 \end{equation*}
 and 
 \begin{equation*}
 L(M)=[n(n-1)]^{n/2}\mathrm{Vol}_{g_H}(M^n).
 \end{equation*}
This means that the volume of a hyperbolic metric on a closed manifold is not only a topological invariant given by the simplicial volume,  but also a  smooth invariant associated with the Yamabe invariant or Perelman's invariant.  Here, we present a partial result in  dimension 4.

The Weyl tensor $W$ in  a Riemannian $4$-manifold can be  decomposed as $W=W^+ + W^-$, where $W^+$ is the self-dual  Weyl tensor and $W^-$ is the anti-self-dual Weyl tensor. Thus, we have $|W|^2=|W^+|^2+|W^-|^2$. Here we view $W$ as a section of $\otimes (T^*M)^4$ and the normal is chosen as   $|W|^2:=W_{ijkl}W^{ijkl}$. Indeed, the squared norm \( | \cdot |^2 \) of a \((k,l)\)-tensor field \( T \) in this paper, expressed in coordinates, is given by  
 \[|T|^2:=g^{i_1m_1}\cdots g^{i_lm_l}g_{j_1n_1}\cdots g_{j_kn_k}T^{j_1 \cdots j_k}_{i_1 \cdots i_l} T^{n_1 \cdots n_k}_{m_1 \cdots m_l}.\]

 An oriented Riemannian $4$-manifold  $(M^4,g)$ is called self-dual (resp. anti-self-dual), if $W^- \equiv 0$ or $W^+ \equiv 0$ on $M^4$.

\begin{theorem}\label{LCF}
Let \( g \) be a self-dual or anti-self-dual metric with scalar curvature \( \mathrm{Sc}_g \geq -12 \) on a closed hyperbolic $4$-manifold \( (M^4, g_H) \),  then $\mathrm{Vol}_g(M^4)\geq \mathrm{Vol}_{g_H}(M^4)$. If $\mathrm{Vol}_g(M^4)= \mathrm{Vol}_{g_H}(M^4)$, then $g \equiv g_H$.
\end{theorem}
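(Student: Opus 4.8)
The plan is to reduce the statement to Proposition~\ref{ncsc} by showing that a self-dual or anti-self-dual metric $g$ on a closed hyperbolic $4$-manifold forces the \emph{Yamabe} metric $g_Y \in [g]$ (the unique one, up to scaling, with $\mathrm{Sc}_{g_Y} = -12$, which exists because $Y(M^4,[g]) < 0$ by the discussion preceding Proposition~\ref{attained}) to be Einstein; once that is known, Proposition~\ref{ncsc} gives $\mathrm{Vol}_g(M^4) \ge \mathrm{Vol}_{g_Y}(M^4)$, and the Einstein comparison $\mathrm{Vol}_{g_Y}(M^4) \ge \mathrm{Vol}_{g_H}(M^4)$ (Besson--Courtois--Gallot, as recalled in the text, since $\mathrm{Ric}_{g_Y} = -3\,g_Y$) finishes the inequality. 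The key analytic input is the Gauss--Bonnet--Chern formula in dimension $4$,
\begin{equation*}
\chi(M^4) = \frac{1}{8\pi^2} \int_{M^4} \left( \frac{\mathrm{Sc}_g^2}{24} + |W_g|^2 - \frac{|\mathring{\mathrm{Ric}}_g|^2}{2} \right) dv_g,
\end{equation*}
where $\mathring{\mathrm{Ric}}_g$ is the traceless Ricci tensor. The point is that self-duality or anti-self-duality of $g$ is a conformally invariant condition, so the Yamabe metric $g_Y$ is still (anti-)self-dual, i.e. $|W_{g_Y}|^2$ equals $|W^{\pm}_{g_Y}|^2$ and only one half of the Weyl tensor survives; moreover the sign of the signature term $\tau(M^4) = \frac{1}{12\pi^2}\int (|W^+|^2 - |W^-|^2)\,dv$ is then controlled.

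The steps, in order: (1) pass to the Yamabe metric $g_Y \in [g]$ with $\mathrm{Sc}_{g_Y} \equiv -12$; note $g_Y$ is (anti-)self-dual since the vanishing of $W^{\mp}$ is conformally invariant. (2) Combine Gauss--Bonnet--Chern with the signature formula to eliminate the surviving $|W^{\pm}|^2$ term: adding or subtracting appropriate multiples of $\chi$ and $\tau$ yields, for a closed hyperbolic $4$-manifold (where $\chi > 0$ and $\tau = 0$ by a result of Chern, so in fact $\tau(M^4)=0$ for \emph{any} closed hyperbolic $4$-manifold), an inequality of the shape
\begin{equation*}
0 \le 8\pi^2 \chi(M^4) = \int_{M^4} \left( \frac{\mathrm{Sc}_{g_Y}^2}{24} + |W^{\pm}_{g_Y}|^2 - \frac{|\mathring{\mathrm{Ric}}_{g_Y}|^2}{2} \right) dv_{g_Y},
\end{equation*}
and separately, since $\tau(M^4)=0$, $\int |W^+_{g_Y}|^2 = \int |W^-_{g_Y}|^2$, which together with $W^{\mp}_{g_Y} \equiv 0$ forces $W_{g_Y} \equiv 0$ entirely — so $g_Y$ is in fact \emph{locally conformally flat}. (3) With $W_{g_Y} \equiv 0$ and $\mathrm{Sc}_{g_Y} = -12$ constant, Gauss--Bonnet--Chern reads $8\pi^2\chi(M^4) = \int (6 - \tfrac12|\mathring{\mathrm{Ric}}_{g_Y}|^2)\,dv_{g_Y}$; comparing with the Gauss--Bonnet--Chern identity for the hyperbolic metric $g_H$ itself (for which $W \equiv 0$, $\mathring{\mathrm{Ric}} \equiv 0$, $\mathrm{Sc} = -12$), namely $8\pi^2\chi(M^4) = 6\,\mathrm{Vol}_{g_H}(M^4)$, and using $\mathrm{Vol}_{g_Y}(M^4) \ge \mathrm{Vol}_{g_H}(M^4)$ from Proposition~\ref{ncsc} (applied with $g$ the hyperbolic metric and $g' = g_Y$ — note this direction is the subtle one), one deduces $\int |\mathring{\mathrm{Ric}}_{g_Y}|^2\,dv_{g_Y} \le 12(\mathrm{Vol}_{g_Y}(M^4) - \mathrm{Vol}_{g_H}(M^4))$; but running the volume comparison the other way via the Einstein rigidity statement pins down $\mathring{\mathrm{Ric}}_{g_Y} \equiv 0$, so $g_Y$ is Einstein, hence hyperbolic up to scaling, hence (by its normalization $\mathrm{Sc}_{g_Y} = -12 = \mathrm{Sc}_{g_H}$) isometric to $g_H$. (4) Finally apply Proposition~\ref{ncsc} to $g \in [g_Y] = [g_H]$: $\mathrm{Sc}_g \ge -12 = \mathrm{Sc}_{g_H}$ gives $\mathrm{Vol}_g(M^4) \ge \mathrm{Vol}_{g_H}(M^4)$, with equality forcing $g \equiv g_H$ by the rigidity clause of that proposition.

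The main obstacle I expect is step~(3): closing the loop between the Gauss--Bonnet--Chern volume identity for $g_Y$ and the a priori inequality $\mathrm{Vol}_{g_Y}(M^4) \ge \mathrm{Vol}_{g_H}(M^4)$ without circularity. One must be careful that Proposition~\ref{ncsc} as stated compares a constant-scalar-curvature metric with a conformal competitor of no-smaller scalar curvature, whereas here both $g_Y$ and $g_H$ have \emph{constant} scalar curvature $-12$ and lie in the same conformal class; in that situation Proposition~\ref{Yamabe} already forces $g_Y$ isometric to $g_H$ directly, provided one first knows $g_H \in [g_Y]$. So the real content is establishing that the (anti-)self-dual Yamabe metric must be conformal to the hyperbolic metric — equivalently locally conformally flat with the right curvature — and the Weyl-vanishing argument in step~(2) together with the rigidity case of Besson--Courtois--Gallot is what carries that weight. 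A secondary subtlety is justifying $\tau(M^4) = 0$ for all closed hyperbolic $4$-manifolds (true because hyperbolic space is conformally flat, so the Hirzebruch signature density vanishes pointwise for $g_H$), which is what makes the self-dual and anti-self-dual cases symmetric and lets us conclude $W_{g_Y}\equiv 0$ rather than merely $W^{\mp}_{g_Y}\equiv 0$.
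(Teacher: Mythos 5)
Your steps (1)--(2) match the paper's proof: conformal invariance of (anti-)self-duality, $\tau(M^4)=0$ for a closed hyperbolic $4$-manifold (the signature density vanishes pointwise for the conformally flat $g_H$), hence $\int_M|W^+_{g_Y}|^2\,dv=\int_M|W^-_{g_Y}|^2\,dv$ and the surviving half of the Weyl tensor vanishes as well, so the Yamabe metric $g_Y$ with $\mathrm{Sc}_{g_Y}\equiv-12$ is locally conformally flat; and Proposition \ref{ncsc} applied inside $[g]$ gives $\mathrm{Vol}_g(M)\geq\mathrm{Vol}_{g_Y}(M)$. The genuine gap is in your step (3), which is where the comparison $\mathrm{Vol}_{g_Y}(M)\geq\mathrm{Vol}_{g_H}(M)$ has to come from. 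You import it by ``applying Proposition \ref{ncsc} with $g=g_H$ and $g'=g_Y$,'' but that proposition only compares metrics in a fixed conformal class, and $g_Y\in[g]$ is not known to be conformal to $g_H$ --- that is exactly what would need proving, so the citation is circular. Worse, you then try to ``run the comparison the other way'' to force $\mathring{\mathrm{Ric}}_{g_Y}\equiv0$ unconditionally, concluding that $g_Y$ is Einstein, hence isometric to $g_H$, hence $[g]=[g_H]$. No such reverse inequality is available (it is essentially Schoen's conjecture), and the unconditional conclusion is strictly stronger than the theorem: it would assert that every self-dual or anti-self-dual conformal class on a closed hyperbolic $4$-manifold is $[g_H]$, contradicting the paper's own remark that such conformal classes can form a positive-dimensional family.

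The repair is short and is what the paper does. With $W_{g_Y}\equiv0$ and $\mathrm{Sc}_{g_Y}\equiv-12$, evaluating the Gauss--Bonnet--Chern formula on $g_Y$ and on $g_H$ yields the identity $24\,\mathrm{Vol}_{g_H}(M)=32\pi^2\chi(M)=24\,\mathrm{Vol}_{g_Y}(M)-2\int_M|\mathring{\mathrm{Ric}}_{g_Y}|^2\,dv_{g_Y}$, so $\mathrm{Vol}_{g_Y}(M)\geq\mathrm{Vol}_{g_H}(M)$ follows at once from nonnegativity of the traceless-Ricci term --- no Einstein condition, no Besson--Courtois--Gallot, and no second invocation of Proposition \ref{ncsc} are needed. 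Chaining with $\mathrm{Vol}_g(M)\geq\mathrm{Vol}_{g_Y}(M)$ gives the stated inequality. Only under the equality assumption does the same identity force $\mathring{\mathrm{Ric}}_{g_Y}\equiv0$; then $W_{g_Y}\equiv0$ plus Einstein gives constant negative curvature, Mostow rigidity gives $g_Y\equiv g_H$, and the rigidity clause of Proposition \ref{ncsc} inside $[g]$ gives $g\equiv g_H$. Your step (4) is correct once it is placed after, not before, the equality assumption.
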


\begin{proof}

Since  the signature $\tau(M^4)$ of  a closed, oriented Riemannian $4$-manifold $(M^4,g)$   is given by
\begin{equation*}
\tau(M)=\frac{1}{48\pi^2}\int_M (|W^+|^2-|W^-|^2)dv_g,
\end{equation*}
and   $\tau(M)=0$ for a  closed  manifold admitting  a hyperbolic metric,   self-dual (or anti-self-dual) of $g$ implies locally conformally flatness of $g$, i.e., $W_g \equiv 0$.

Since \( Y(M, [g]) < 0 \), the Yamabe--Trudinger--Aubin--Schoen theorem guarantees that \( g \) can be conformally deformed to a metric \( g_1 \)  with $\mathrm{Sc}_{g_1} = -12$. Then, by Proposition \ref{ncsc}, it follows that $\mathrm{Vol}_{g}(M) \geq \mathrm{Vol}_{g_1}(M)$. As the locally conformally flatness is conformally invariant, we  have $W_{g_1} \equiv 0$.

       The Gauss-Bonnet-Chern integral for a closed, oriented $(M^4,g)$ is given by
\begin{equation}\label{4d GBC}
32\pi^2 \chi(M)=\frac{1}{6}\int_M\mathrm{Sc}_g^2dv_g -2\int_M |\mathring{\mathrm{Ric}}_g|^2dv_g + \int_M|W_g|^2dv_g,
\end{equation}
to 
where $\mathring{\mathrm{Ric}}$ denotes the traceless Ricci tensor, i.e., $\mathring{\mathrm{Ric}} = \mathrm{Ric}_g - (\mathrm{Sc}_g/4 \cdot g
)$.  Applying it to $(M^4,g_1)$, we obtain

\begin{equation}\label{4D}
24\mathrm{Vol}_{g_H}(M)=32\pi^2 \chi(M)= 24\mathrm{Vol}_{g_1}(M)-2\int_M|\mathring{\mathrm{Ric}}_{g_1}|^2dv_{g_1}.
\end{equation}

Thus,  the inequality $\mathrm{Vol}_{g_1}(M) \geq \mathrm{Vol}_{g_H}(M)$ holds. Hence, $\mathrm{Vol}_g(M)\geq   \mathrm{Vol}_{g_1}(M) \geq      \mathrm{Vol}_{g_H}(M)$. 

Now assume $\mathrm{Vol}_g(M)= \mathrm{Vol}_{g_H}(M)$, then we have $\mathrm{Vol}_{g_1}(M)= \mathrm{Vol}_{g_H}(M)$. This implies $\mathring{\mathrm{Ric}}_{g_1}\equiv 0$ through the integral of the Gauss-Bonnet-Chern formula. The Mostow rigidity theorem implies   $g_1 \equiv g_H$. Finally, the rigidity part of Proposition \ref{ncsc} concludes the proof.
\end{proof}

\begin{remark}
Let $M^4$ be a closed connected manifold admitting  a hyperbolic metric. 
The  Besson-Courtois-Gallot's result and Equation (\ref{4d GBC}) together demonstrate that, for a  Einstein metric $g$ with $\mathrm{Ric}_g=-3\mathrm{Sc}_g$ on $M^4$, its Weyl tensor vanishes. Consequently, the metric $g$ is isometric to the hyperbolic metric $g_H$.
\end{remark}

\begin{remark}
Compactness is necessary in the proof, as there exist complete noncompact manifolds with negative scalar curvature but positive Yamabe constant. For example, a complete, noncompact, simply connected, locally conformally flat Riemannian manifold has positive Yamabe constant, and there exists  a complete scalar-flat Riemannian manifold  with  positive Yamabe invariant \cite{zbMATH00124945}.
\end{remark}

\begin{remark}
There exists a closed \( 4 \)-manifold admitting a hyperbolic metric such that the topological dimension of the space of conformal classes containing self-dual or anti-self-dual metrics is nonzero. So far, there are no known examples of closed hyperbolic \( 4 \)-manifolds for which there is a unique conformal class containing a self-dual or anti-self-dual metric.
\end{remark}

\begin{remark}
For a smooth 4-manifold \( M^4 \) admitting a self-dual metric with constant scalar curvature, Meyers and Periwal developed a theory of topological gravity in \cite{1991NuPhB.361..290M}. Moreover, this theory yields non-trivial smooth invariants of the manifold.
\end{remark}

Since $\chi (M^4)= 3\mathrm{Vol}_{g_H}(M^4)/4\pi^2\geq 1$, it follows that the volume of any closed, oriented hyperbolic \( 4 \)-manifold is uniformly bounded below by  $4\pi^2/3$. Thus, the set of volumes admits a positive lower bound and exhibits a gap.

The proof method of Theorem \ref{LCF} also extends to locally conformal flat manifolds with positive scalar curvature  and a vanishing  first homology group in dimensions four and six.

\begin{corollary} \label{LCF on spheres}
Let $n=4$ or $6$, and let $(M^n,g)$ be a closed, oriented,  locally conformally flat metric  with $\mathrm{Sc}_g = n(n-1)$  and $H_1(M^n;\mathbb{Z})=0$, then  \( \mathrm{Vol}_g(M^n) \geq \mathrm{Vol}_{g_{st}}(S^n) \). If the equality holds,  then  $(M^n,g)$  is isometric to  $(S^n,g_{st})$.
\end{corollary}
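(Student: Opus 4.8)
The plan is to run the scheme of Theorem~\ref{LCF} with the order of steps reversed: since $\mathrm{Sc}_g\equiv n(n-1)$ is already constant no Yamabe deformation is needed, and local conformal flatness gives $W_g\equiv 0$, so one simply feeds $g$ into the Gauss--Bonnet--Chern formula. For $n=4$ I would substitute $\mathrm{Sc}_g\equiv 12$ and $W_g\equiv 0$ into identity~(\ref{4d GBC}), getting
\begin{equation*}
32\pi^2\chi(M^4)=24\,\mathrm{Vol}_g(M^4)-2\int_{M^4}|\mathring{\mathrm{Ric}}_g|^2\,dv_g\le 24\,\mathrm{Vol}_g(M^4),
\end{equation*}
so $\mathrm{Vol}_g(M^4)\ge \tfrac{4\pi^2}{3}\chi(M^4)$. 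The hypothesis $H_1(M^4;\mathbb{Z})=0$ forces $b_1(M^4)=0$ and, by Poincar\'e duality, $b_3(M^4)=0$, whence $\chi(M^4)=2+b_2(M^4)\ge 2=\chi(S^4)$; since $\mathrm{Vol}_{g_{st}}(S^4)=\tfrac{8\pi^2}{3}=\tfrac{4\pi^2}{3}\chi(S^4)$, this gives $\mathrm{Vol}_g(M^4)\ge\mathrm{Vol}_{g_{st}}(S^4)$.

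For $n=6$ I would use the six-dimensional Gauss--Bonnet--Chern formula; because $W_g\equiv 0$, its Pfaffian integrand is a universal positive constant times $\sigma_3$ of the Schouten tensor $P=\tfrac14\bigl(\mathrm{Ric}_g-\tfrac{\mathrm{Sc}_g}{10}g\bigr)$. As $\mathrm{Sc}_g\equiv 30$, the trace $\mathrm{tr}_g P\equiv 3$ is constant, and writing $P=\tfrac12 g+\mathring P$ with $\mathring P=\tfrac14\mathring{\mathrm{Ric}}_g$ traceless, one computes $\sigma_3(g^{-1}P)=\tfrac52-|\mathring P|^2+\tfrac13\mathrm{tr}(\mathring P^3)$; pinning the constant down by evaluating on $(S^6,g_{st})$, where $\mathring P\equiv 0$, reduces the desired volume inequality to the two assertions $\chi(M^6)\ge 2$ and $\int_{M^6}\bigl(\tfrac13\mathrm{tr}(\mathring P^3)-|\mathring P|^2\bigr)\,dv_g\le 0$. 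For the cubic term I would exploit that local conformal flatness makes $P$ a Codazzi tensor while constant scalar curvature makes it divergence free, and insert these facts into the Weitzenb\"ock identity for Codazzi tensors; integrating by parts and using $W_g\equiv 0$ to rewrite every curvature contraction in terms of $P$ alone should produce an integral identity relating $\int\mathrm{tr}(\mathring P^3)$, $\int|\nabla\mathring P|^2$ and $\int|\mathring P|^2$ from which the required sign follows. For $\chi(M^6)\ge 2$ I would invoke the Schoen--Yau theorem on compact locally conformally flat manifolds of positive scalar curvature: the universal cover is $S^6\setminus\Lambda$ with $\dim_{\mathcal H}(\Lambda)\le 2$, hence $2$-connected by Alexander duality, so $H_1(M^6)\cong\pi_1(M^6)^{\mathrm{ab}}$; when $\pi_1(M^6)$ is finite the cover is compact, $\Lambda=\varnothing$, and $M^6$ is a free finite conformal quotient of $S^6$, hence $S^6$ or $\mathbb{RP}^6$ with the latter excluded by $H_1(\mathbb{RP}^6;\mathbb{Z})\neq 0$; when $\pi_1(M^6)$ is infinite the homological restrictions on $S^6\setminus\Lambda$, pushed through the Lyndon--Hochschild--Serre spectral sequence of $S^6\setminus\Lambda\to M^6$, force $b_3(M^6)\le 2b_2(M^6)$ and thus $\chi(M^6)=2+2b_2(M^6)-b_3(M^6)\ge 2$.

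For the rigidity statement in either dimension, $\mathrm{Vol}_g(M^n)=\mathrm{Vol}_{g_{st}}(S^n)$ forces, through the Gauss--Bonnet--Chern identity, both $\mathring{\mathrm{Ric}}_g\equiv 0$ and $\chi(M^n)=2$. An Einstein, locally conformally flat metric of positive scalar curvature has constant positive sectional curvature, so $(M^n,g)$ is a spherical space form $S^n/\Gamma$ with $\Gamma\subset O(n+1)$ acting freely; for $n$ even the only possibilities are $\Gamma$ trivial or $\Gamma=\mathbb{Z}/2$, and $\mathbb{RP}^n$ is ruled out because $H_1(\mathbb{RP}^n;\mathbb{Z})=\mathbb{Z}/2\neq 0$. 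Hence $M^n=S^n$, and since the constant sectional curvature equals $\mathrm{Sc}_g/(n(n-1))=1$, $(M^n,g)$ is isometric to $(S^n,g_{st})$.

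The crux is the $n=6$ step. The cubic invariant $\mathrm{tr}(\mathring{\mathrm{Ric}}_g^3)$ is not sign definite pointwise (a traceless symmetric endomorphism can have $\mathrm{tr}(\cdot^3)>0$), so the non-positivity of its total mass has to be squeezed out of the Codazzi structure of the Schouten tensor by a Bochner-type computation, and that is the delicate calculation; a secondary difficulty is the lower bound $\chi(M^6)\ge 2$ in the case of infinite fundamental group, where one must combine the Schoen--Yau bound $\dim_{\mathcal H}(\Lambda)\le 2$ with the spectral sequence of the universal covering to control $b_3(M^6)$. By contrast, the $n=4$ case and all of the rigidity part are routine once the Gauss--Bonnet--Chern formula and Poincar\'e duality are in hand.
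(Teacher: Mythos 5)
Your $n=4$ argument is essentially the paper's and is fine (indeed, for the volume inequality you only need $\chi(M^4)\ge 2$, which follows from $H_1=0$ and Poincar\'e duality as you say); the rigidity discussion in both dimensions is also in the right spirit. The problem is the $n=6$ case, where the two steps you yourself flag as the crux are left as assertions, and one of them is aimed in the wrong direction.

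First, the sign of the cubic term. Your reduction of the Pfaffian to $\sigma_3$ of the Schouten tensor and the algebra $\sigma_3(g^{-1}P)=\tfrac52-|\mathring P|^2+\tfrac13\mathrm{tr}(\mathring P^3)$ are correct, but the entire difficulty of the six-dimensional case is precisely the inequality $\int_M\bigl(\tfrac13\mathrm{tr}(\mathring P^3)-|\mathring P|^2\bigr)dv_g\le 0$, and your proposal only says that a Codazzi/Weitzenb\"ock computation ``should produce an integral identity from which the required sign follows.'' That identity is exactly what the paper imports from Gursky (his Lemma~1.3 and Eq.~(1.16), quoted as equation~(\ref{trace-free})), which for constant scalar curvature yields $\int_M\mathrm{tr}(\mathring{\mathrm{Ric}}^3)\,dv_g=-\tfrac23\int_M|\nabla\mathring{\mathrm{Ric}}|^2dv_g-4\int_M|\mathring{\mathrm{Ric}}|^2dv_g\le 0$; without carrying out that computation (or citing it), your proof of the volume inequality for $n=6$ is incomplete. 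Second, the bound $\chi(M^6)\ge 2$. Your route---Alexander duality to get a $2$-connected universal cover and then a Lyndon--Hochschild--Serre/Cartan--Leray argument claimed to force $b_3(M^6)\le 2\,b_2(M^6)$---is not substantiated: with infinite $\pi_1$ the $E_2$-page contains $H_3(\pi_1;\mathbb{Z})$ and $H_0(\pi_1;H_3(\widetilde M))$ contributions, and no inequality of the form $b_3\le 2b_2$ drops out of it. The paper's (standard) mechanism is different and immediate: on a closed locally conformally flat $2m$-manifold of positive scalar curvature the Bochner--Weitzenb\"ock formula for $m$-forms, $\Delta=\nabla^*\nabla+\tfrac{m}{4(m-1)}\mathrm{Sc}_g$, kills all harmonic middle-degree forms, so $b_3(M^6)=0$ and, with $b_1=b_5=0$ from $H_1=0$, $\chi(M^6)=2+2b_2\ge 2$. (The paper additionally invokes Gursky's Theorem~A to identify $M$ conformally with the round sphere, which you bypass; that is harmless for the inequality, but your substitute for the Euler characteristic bound does not work as stated.) Until these two points are actually proved, the $n=6$ half of Corollary~\ref{LCF on spheres} is not established by your argument.
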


\begin{proof}
For a closed, oriented, locally conformally flat Riemannian \( 2m \)-manifold \( (N^{2m}, g) \), the Bochner--Weitzenböck formula for \( m \)-forms reads
\[
\Delta = \nabla^{\ast} \nabla + \frac{m}{4(m-1)}\, \mathrm{Sc}_g .
\]
Hence, if \( (N^{2m}, g) \) has positive scalar curvature, it admits no non-zero harmonic \( m \)-forms; that is, \( H_m(N^{2m}; \mathbb{R}) = 0 \). Applying this together with the additional condition \( H_1(M^n; \mathbb{Z}) = 0 \), one obtains \( \chi(M^4) = 2 \) (respectively, \( \chi(M^6) \ge 2 \)).

  One the other hand, Gursky \cite[Theorem A]{zbMATH00750638} shows that, if a closed $4$- or $6$-dimensional manifold $M$ carries a locally flat metric $\bar{g}$ with non-negative scalar curvature, then  $\chi (M)\leq 2$. Moreover, $\chi (M)=2$ if and only if $(M,\bar{g})$ is conformally equivalent to the sphere with its standard round metric. Thus, $(M^4,g)$ (resp. $(M^6,g)$) is conformally equivalent to $(S^4,g_{st})$ (resp. $(S^6,g_{st})$).

Combining with  $\chi(S^4)=2$,  $\mathrm{Vol}_{g_{st}}(S^4)=8\pi^2/3$, $\mathrm{Sc}_g \equiv 12$, and the Gauss-Bonnet-Chern formula (\ref{4d GBC}) in  dimension 4, we have:  
\begin{equation*}
24\mathrm{Vol}_{g_{st}}(S^4)= 32\pi^2 \chi(M^4)= 24\mathrm{Vol}_g(M^4)  -2\int_M|\mathring{\mathrm{Ric}}|^2dv_g 
\end{equation*} 
Hence, \( \mathrm{Vol}_g(M^4) \geq \mathrm{Vol}_{g_{st}}(S^4) \) and holding  the equality implies $\mathring{\mathrm{Ric}} \equiv 0$. If the dimension of the manifold is at least 4, then, $W\equiv 0 \equiv \mathring{\mathrm{Ric}}$ implies the sectional curvature of $g$ is constant. Therefore, $g$ is isometric to  $g_{st}$ in $S^4$.

Similarly, for a  locally conformally flat $6$-manifold $(M^6,g)$,  the Gauss-Bonnet-Chern formula is given by
\begin{equation}\label{6D}
64\pi^3 \chi(M^6)=\frac{1}{225}\int_M\mathrm{Sc}_g^3dv_g -\frac{1}{10}\int_M \mathrm{Sc}_g |\mathring{\mathrm{Ric}}
|^2dv_g + \frac{1}{4}\int_M \mathrm{tr}(\mathring{\mathrm{Ric}}^3)dv_g,
\end{equation}
where $\mathrm{tr}(\mathring{\mathrm{Ric}}^3) = \mathring{\mathrm{Ric}}_{ij}\mathring{\mathrm{Ric}}_{jk}\mathring{\mathrm{Ric}}_{ki}$ with respect to an orthonormal frame.  Using Gursky's  computation of the Laplacian of $\mathring{\mathrm{Ric}}$ for a   locally conformally flat metric  in a local coordinate system, we obtain the following formula for $(M^6,g)$ \cite[Lemma 1.3 and Eq. (1.16)]{zbMATH00750638}:
\begin{equation}\label{trace-free}
\int_M|\nabla \mathring{\mathrm{Ric}}|^2dv_g= \frac{2}{15}\int_M|\nabla \mathrm{Sc}_g|^2dv_g -\frac{3}{2}\int_M \mathrm{tr}(\mathring{\mathrm{Ric}}^3)dv_g - \frac{1}{5}\int_M \mathrm{Sc}_g |\mathring{\mathrm{Ric}}|^2dv_g.
\end{equation}

In our case, we have $M^6=S^6$, $\chi(S^6)=2$,  $\mathrm{Vol}_{g_{st}}(S^6)=16\pi^3/15$, and  $\mathrm{Sc}_g \equiv 30$. substituting them to (\ref{6D}) and (\ref{trace-free}) gives
\begin{equation*}
120 \mathrm{Vol}_{g_{st}}(S^6)=120\mathrm{Vol}_g(M^6) +  \frac{1}{4}\int_M \mathrm{tr}(\mathring{\mathrm{Ric}}^3)dv_g-  3\int_M |\mathring{\mathrm{Ric}}|^2dv_g
\end{equation*}
and 
\begin{equation*}
\int_M \mathrm{tr}(\mathring{\mathrm{Ric}}^3)dv_g= -\frac{2}{3}\int_M|\nabla \mathring{\mathrm{Ric}}|^2dv_g -4\int_M  |\mathring{\mathrm{Ric}}|^2dv_g  \leq 0. 
\end{equation*}
Thus, we obtain  $\mathrm{Vol}_g(M^6) \geq \mathrm{Vol}_{g_{st}}(S^6)$ and holding  the equality implies $\mathring{\mathrm{Ric}} \equiv 0$.  Therefore, $g$ is isometric to  $g_{st}$ in $S^6$.
\end{proof}

\begin{proof}[An alternative proof in dimension 4]
Let \( M^4 \) be a closed Riemannian 4-manifold equipped with a locally conformally flat metric of positive scalar curvature. In dimension 4, such a metric necessarily has positive isotropic curvature. By applying the Ricci flow with surgery, Hamilton–Chen–Tang–Zhu classified all compact 4-manifolds with positive isotropic curvature~\cite{zbMATH06081388}. According to their result, every closed $4$-manifold $M$ admitting an LCF metric with PSC is diffeomorphic to one of:$$S^4, \quad \mathbb{RP}^4, \quad (S^3 \times \mathbb{R})/G,$$or a connected sum of such spaces, where $G$ is a cocompact, fixed-point-free, discrete subgroup of $\mathrm{Isom}(S^3 \times \mathbb{R})$.

It follows that a finite cover of \( M^4 \) is diffeomorphic to either \( S^4 \), \( S^3 \times S^1 \), or a connected sum of such manifolds. In particular, the assumption \( H_1(M^4; \mathbb{Z}) = 0 \) rules out the presence of any \( S^3 \times S^1 \) summands, so \( M^4 \) must be diffeomorphic to \( S^4 \). 

Indeed, suppose there exists a finite covering 
$p : \widetilde{M} \longrightarrow M$ of degree \( d \) such that 
$\widetilde{M} \cong S^4 \;\#\; k\,(S^3 \times S^1)$ for some integer \( k \geq 0 \). For closed \( 4 \)-manifolds \( X \) and \( Y \), one has 
\[
\chi(X \# Y) = \chi(X) + \chi(Y) - 2.
\]
Therefore,
\[
\chi(\widetilde{M}) = \chi(S^4) + k \,\chi(S^3 \times S^1) - 2k = 2 - 2k.
\]
Since the Euler characteristic multiplies under finite coverings, 
$\chi(\widetilde{M}) = d \,\chi(M),$ so
\[
2 - 2k = d \,\chi(M).
\]
By Poincaré duality and the assumption \( H_1(M; \mathbb{Z}) = 0 \), we have \( b_1(M) = b_3(M) = 0 \), hence 
\[
\chi(M) = 2 + b_2(M) \geq 2.
\]
Consequently,
\[
d \,\chi(M) \geq 2d > 0,
\]
which forces \( 2 - 2k > 0 \), so \( k = 0 \). Thus \( \widetilde{M} \cong S^4 \).

Now \( p : S^4 \to M \) is a finite covering. Since \( S^4 \) is simply connected, we have 
$M \cong S^4 / \Gamma,$
where \( \Gamma = \pi_1(M) \) is finite of order \( d \). Moreover,
\[
\chi(M) = \frac{\chi(S^4)}{|\Gamma|} = \frac{2}{|\Gamma|} \geq 2,
\]
which implies \( |\Gamma| = 1 \). Therefore, \( \Gamma \) is trivial and \( M^4 \) is diffeomorphic to \( S^4 \).

By Kuiper’s theorem, a closed, simply connected, locally conformally flat manifold is conformally equivalent to the standard sphere. Since the metric has constant scalar curvature \(\mathrm{Sc}_g \equiv 12\), it must be isometric to the round sphere \( (S^4, g_{st}) \).

This provides an alternative proof in dimension 4 that avoids the use of Gursky’s result.
\end{proof}

\begin{remark}
The condition \( H_1(M^n; \mathbb{R}) = 0 \) in Corollary~\ref{LCF on spheres} cannot be replaced by the  assumption \( \chi(M^n) = 0 \). For instance, consider the product manifold \( S^1 \times S^3 \) equipped with the metric $g_\lambda = \lambda\,g_{S^1} \oplus \tfrac{1}{2}g_{st}$,
where \( \lambda > 0 \), \( g_{S^1} \) is the round metric on \( S^1 \), and \( g_{st} \) is the standard round metric on \( S^3 \). Then \( (S^1 \times S^3, g_\lambda) \) is a locally conformally flat manifold with constant scalar curvature \( \mathrm{Sc}_{g_\lambda} = 12 \), and Euler characteristic \( \chi(S^1 \times S^3) = 0 \). However, the volume of \( (S^1 \times S^3, g_\lambda) \) can be made arbitrarily small or arbitrarily large by taking \( \lambda \to 0 \) or \( \lambda \to \infty \), respectively, while keeping the scalar curvature fixed. 

\end{remark}

\begin{remark}
Let \( (M^4, g) \) be a smooth Riemannian 4-manifold. The biorthogonal curvature of a 2-plane \( \sigma \subset T_pM \) is defined by
\[
K^{\perp}_g(\sigma) := \frac{1}{2} \left( K_g(\sigma) + K_g(\sigma^{\perp}) \right),
\]
where \( K_g(\sigma) \) denotes the sectional curvature of \( \sigma \), and \( \sigma^{\perp} \) is the orthogonal complement of \( \sigma \) in \( T_pM \). The existence of a metric with positive biorthogonal curvature does not imply the existence of a metric with positive Ricci curvature. For instance, the product manifold \( S^3 \times S^1 \), equipped with the product metric \( g_{st} \oplus g \) (where \( g_{st} \) is the standard round metric on \( S^3 \) and \( g \) is the standard metric on \( S^1 \)), has positive biorthogonal curvature. However, \( S^3 \times S^1 \) does not admit any metric with positive Ricci curvature, as its fundamental group is infinite. Conversely, \( K^{\perp}_g \geq k \) implies \( \mathrm{Sc}_g \geq 12k \). Thus, the condition \( \mathrm{Sc}_g = 12 \) in Corollary~\ref{LCF on spheres} can be replaced by \( K^{\perp}_g = 1 \).
\end{remark}

\begin{remark} \label{4D bio}
A compact Riemannian 4-manifold \( (M^4, g) \) is locally conformally flat if and only if \( K^{\perp}_g = \mathrm{Sc}_g / 12 \). In particular, Schur's lemma does not extend to biorthogonal curvature, as the pointwise constancy of \( K^{\perp}_g \) does not imply that it is constant globally. Indeed, by a theorem of Kulkarni, a Riemannian \( n \)-manifold is locally conformally flat if and only if, for any orthonormal set of tangent vectors \( e_1, e_2, e_3, e_4 \), the sectional curvatures satisfy
\[K_{12} + K_{34} = K_{13} + K_{24} = K_{14} + K_{23},\]
where \( K_{ij} \) denotes the sectional curvature of the plane spanned by \( e_i \) and \( e_j \). A straightforward computation shows that this condition is equivalent to \( K^{\perp}_g = \mathrm{Sc}_g / 12 \) in dimension 4. 
\end{remark}

\begin{remark}
 It remains unclear whether the hyperbolic metric assumption in Theorem~\ref{LCF} can be replaced by the weaker condition \( K^{\perp}_g = -1 \), as this does not necessarily imply constant sectional curvature \( K_g = -1 \). For example, consider the product manifold \( M^3 \times S^1 \) equipped with the product metric \( \frac{1}{2} g_H \oplus g \), where \( g_H \) is the hyperbolic metric on \( M^3 \) and \( g \) is the standard flat metric on \( S^1 \). This metric satisfies \( K^{\perp}_{\frac{1}{2} g_H \oplus g} = -1 \), yet \( M^3 \times S^1 \) does not admit any hyperbolic metric, in accordance with Preissmann’s theorem. Moreover, since \( \chi(M^3 \times S^1) = 0 \), the argument used in the proof of Theorem~\ref{LCF} does not apply in this case.
\end{remark}

By Proposition \ref{ncsc}, it suffices to consider the Yamabe metric within the conformal class, as the strategy employed in the proof of Theorem \ref{LCF} demonstrates the effectiveness of this approach. 

\begin{theorem} \label{local}
Let \( g \) be a Riemannian metric with constant scalar curvature \( \mathrm{Sc}_g \equiv -12 \) on a closed, oriented hyperbolic 4-manifold \( (M^4, g_H) \). Assume that for every point \( x \in M^4 \), there exists a sufficiently small radius \( r(x) > 0 \) such that for all \( 0 < r \leq r(x) \), the volumes of the geodesic \( r \)-balls \( B_r(x) \) satisfy
\[
\mathrm{Vol}_g(B_r(x)) \geq \mathrm{Vol}_{g_H}(B_r(x)).
\]
Then it follows that
$ \mathrm{Vol}_g(M^4) \geq \mathrm{Vol}_{g_H}(M^4)$. Moreover, if equality holds, i.e., \( \mathrm{Vol}_g(M^4) = \mathrm{Vol}_{g_H}(M^4) \), then \( g \equiv g_H \).

\end{theorem}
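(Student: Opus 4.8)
The plan is to convert the local volume hypothesis into a pointwise curvature inequality and then feed it into the Gauss--Bonnet--Chern formula (\ref{4d GBC}), in the same spirit as the proof of Theorem~\ref{LCF}. Here no conformal deformation is needed, since $g$ is already a metric of constant scalar curvature $-12$, hence a Yamabe metric in its class.

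\emph{Step 1: from small balls to a pointwise inequality.} For a Riemannian $n$-manifold the volume of a geodesic $r$-ball about $x$ has the classical asymptotic expansion of Gray,
\[
\mathrm{Vol}(B_r(x)) = \omega_n r^n\left(1 - \frac{\mathrm{Sc}(x)}{6(n+2)}\,r^2 + \frac{-3|\mathrm{Riem}|^2 + 8|\mathrm{Ric}|^2 + 5\,\mathrm{Sc}^2 - 18\,\Delta\mathrm{Sc}}{360(n+2)(n+4)}(x)\,r^4 + O(r^6)\right),
\]
where $\omega_n$ is the Euclidean unit-ball volume (the precise sign convention of the $\Delta\mathrm{Sc}$ term is irrelevant here). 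Since $g_H$ is locally isometric to $\mathbb{H}^4$, the $g_H$-ball $B_r(x)$ has the volume of a hyperbolic $r$-ball, for which $\mathrm{Sc}\equiv -12$, $|\mathrm{Ric}|^2\equiv 36$, $|\mathrm{Riem}|^2\equiv 24$. As $\mathrm{Sc}_g\equiv -12$ is constant, the $r^0$ and $r^2$ coefficients of the two expansions coincide and the $\Delta\mathrm{Sc}$ term drops, so
\[
\mathrm{Vol}_g(B_r(x)) - \mathrm{Vol}_{g_H}(B_r(x)) = \frac{\omega_4}{17280}\Bigl(-3|\mathrm{Riem}_g|^2(x) + 8|\mathrm{Ric}_g|^2(x) - 216\Bigr)\,r^8 + O(r^{10}).
\]
If the coefficient in parentheses were negative at some $x_0$, this difference would be negative for all small $r$, contradicting the hypothesis; hence $-3|\mathrm{Riem}_g|^2 + 8|\mathrm{Ric}_g|^2 \ge 216$ everywhere on $M^4$.

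\emph{Step 2: rewriting in terms of $W$ and $\mathring{\mathrm{Ric}}$.} In dimension $4$ the orthogonal decomposition of the curvature tensor gives $|\mathrm{Riem}_g|^2 = |W_g|^2 + 2|\mathring{\mathrm{Ric}}_g|^2 + \mathrm{Sc}_g^2/6$ and $|\mathrm{Ric}_g|^2 = |\mathring{\mathrm{Ric}}_g|^2 + \mathrm{Sc}_g^2/4$. Substituting $\mathrm{Sc}_g\equiv -12$, the inequality of Step~1 becomes
\[
2\,|\mathring{\mathrm{Ric}}_g|^2 \;\ge\; 3\,|W_g|^2 \qquad\text{pointwise on } M^4 .
\]

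\emph{Step 3: Gauss--Bonnet--Chern and rigidity.} Applying (\ref{4d GBC}) to $g$ (with $\mathrm{Sc}_g\equiv -12$) and to $g_H$ (for which $\mathring{\mathrm{Ric}}_{g_H}\equiv 0$, $W_{g_H}\equiv 0$, so $32\pi^2\chi(M^4)=24\,\mathrm{Vol}_{g_H}(M^4)$), we get
\[
24\bigl(\mathrm{Vol}_g(M^4) - \mathrm{Vol}_{g_H}(M^4)\bigr) = \int_{M^4}\bigl(2|\mathring{\mathrm{Ric}}_g|^2 - |W_g|^2\bigr)\,dv_g .
\]
By Step~2 the integrand satisfies $2|\mathring{\mathrm{Ric}}_g|^2 - |W_g|^2 \ge 2|W_g|^2 \ge 0$, so $\mathrm{Vol}_g(M^4)\ge \mathrm{Vol}_{g_H}(M^4)$. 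If equality holds, the integrand vanishes identically, which together with $2|\mathring{\mathrm{Ric}}_g|^2\ge 3|W_g|^2$ forces $W_g\equiv 0$ and $\mathring{\mathrm{Ric}}_g\equiv 0$; in dimension $4$ this means $g$ has constant sectional curvature (as used in the proof of Corollary~\ref{LCF on spheres}), and $\mathrm{Sc}_g\equiv -12$ fixes the curvature to be $-1$, so $g$ is hyperbolic. Mostow rigidity then identifies $g$ with $g_H$ up to isometry. The main obstacle is Step~1: pinning down the normalization and conventions of the $r^4$ coefficient in Gray's expansion, and observing that equality of the (constant) scalar curvatures is exactly what promotes the comparison from the automatically-equal $r^2$ order to the decisive $r^4$ order; Steps~2 and~3 are then the algebraic identity for the curvature norms and the Gauss--Bonnet bookkeeping already carried out for Theorem~\ref{LCF}.
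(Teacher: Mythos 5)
Your proposal is correct and follows essentially the same route as the paper's own proof: Gray's expansion of small geodesic balls yields the pointwise bound $-3|\mathrm{Rm}_g|^2+8|\mathrm{Ric}_g|^2\ge 216$, the four-dimensional curvature decomposition turns this into $2|\mathring{\mathrm{Ric}}_g|^2\ge 3|W_g|^2$, and the Gauss--Bonnet--Chern formula then gives the volume inequality and the rigidity via $\mathring{\mathrm{Ric}}_g\equiv 0$, $W_g\equiv 0$, constant curvature, and Mostow rigidity. Your rewriting of the Gauss--Bonnet step as the identity $24(\mathrm{Vol}_g-\mathrm{Vol}_{g_H})=\int_M(2|\mathring{\mathrm{Ric}}_g|^2-|W_g|^2)\,dv_g$ is only a cosmetic rearrangement of the paper's argument.
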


\begin{proof}
The asymptotic expansion of the volume of a small geodesic \( r \)-ball \( B_x(r) \) in a Riemannian $n$-manifold \( (M^n, g) \) is given by~\cite[Theorem 3.3]{zbMATH03667577}:

\begin{align*}
\mathrm{Vol}_g(B_r(x)) &= \mathrm{Vol}_{\mathbb{E}}(B_r) \Bigg[ 1 - \frac{\mathrm{Sc}_g(x)}{6(n+2)} r^2 \\
&\quad + \frac{1}{360(n+2)(n+4)} \left( -3|\mathrm{Rm}_g|^2 + 8|\mathrm{Ric}_g|^2 + 5\,\mathrm{Sc}_g^2 - 18\,\Delta_g \mathrm{Sc}_g \right) r^4 + O(r^6), \Bigg]
\end{align*}
as $r \to 0$, where $\mathrm{Rm}_g$ is the Riemannian curvature tensor of the metric $g$.  On the other hand, the norm of the Riemann curvature tensor is given by
  \begin{equation*}
 |\mathrm{Rm}_g|^2= |W_g|^2+\frac{4}{n-2}|\mathrm{Ric}_g|^2 - \frac{2}{(n-1)(n-2)}\mathrm{Sc}_g^2,
  \end{equation*}
 or, 
  
  \begin{equation*}
 |\mathrm{Rm}_g|^2= |W_g|^2+\frac{4}{n-2}|\mathring{\mathrm{Ric}}_g|^2 + \frac{2}{n(n-1)}\mathrm{Sc}_g^2,
  \end{equation*}
  since \[|\mathring{\mathrm{Ric}}_g|^2=|\mathrm{Ric}_g|^2-\frac{\mathrm{Sc}_g^2}{n}.\]

 In the case of \( (M^4, g_H) \), we have the identity
\[
-3|\mathrm{Rm}_{g_H}|^2 + 8|\mathrm{Ric}_{g_H}|^2 = 216.
\]
Assuming that \( \mathrm{Sc}_g \equiv -12 \) and that the volume of geodesic balls satisfies
\[
\mathrm{Vol}_g(B_r(x)) \geq \mathrm{Vol}_{g_H}(B_r(x)),
\]
it follows that
\[
-3|\mathrm{Rm}_g|^2 + 8|\mathrm{Ric}_g|^2 \geq 216.
\]

Substituting the curvature decompositions  
\[
\bigl| \mathrm{Rm}_{g} \bigr|^{2}
  = \bigl| W_{g} \bigr|^{2}
    + 2 \bigl| \mathring{\mathrm{Ric}}_g|^{2}
    + \frac{1}{6}\,\mathrm{Sc}_{g}^{2},
\qquad
\bigl| \mathrm{Ric}_{g} \bigr|^{2}
  = \bigl| \mathring{\mathrm{Ric}}_g|^{2}
    + \frac{1}{4}\,\mathrm{Sc}_{g}^{2},
\]
and the assumption \(\mathrm{Sc}_{g}\equiv -12\) into  
\[
-3\bigl| \mathrm{Rm}_{g} \bigr|^{2}
  + 8\bigl| \mathring{\mathrm{Ric}}_g|^2\;\ge\;216,
\]
yields
\[
-3|W_{g}|^{2} - 6| \mathring{\mathrm{Ric}}_g|^{2} - 72
  + 8\bigl(| \mathring{\mathrm{Ric}}_g|^{2} + 36\bigr)
  \;\ge\;216 .
\]
Simplifying gives
\[
-3\,|W_{g}|^{2} + 2\,| \mathring{\mathrm{Ric}}_g|^{2} \;\ge\;0
\quad\Longrightarrow\quad
|W_{g}|^{2}\;\le\;\frac{2}{3}\,| \mathring{\mathrm{Ric}}_g|^{2}.
\]

The Gauss-Bonnet-Chern formula (\ref{4d GBC}) implies 
\begin{equation*}
32\pi^2 \chi(M)\leq \frac{1}{6}\int_M\mathrm{Sc}_g^2dv_g -2\int_M |\mathring{\mathrm{Ric}}_g|^2dv_g + \int_M\frac{2}{3}\,| \mathring{\mathrm{Ric}}_g|^{2}dv_g.
\end{equation*}
That means

\begin{equation*}
24\mathrm{Vol}_{g_H}(M)=32\pi^2 \chi(M) \leq  24\mathrm{Vol}_{g}(M)- \frac{4}{3}\int_M|\mathring{\mathrm{Ric}}_g|^2dv_g.
\end{equation*}

Consequently, we have $ \mathrm{Vol}_g(M^4) \geq \mathrm{Vol}_{g_H}(M^4)$. If the equality holds, i.e., \( \mathrm{Vol}_g(M^4) = \mathrm{Vol}_{g_H}(M^4) \), then $|\mathring{\mathrm{Ric}}_g|^2 \equiv 0$ and  $|\mathring{\mathrm{Ric}}_g|^2 \equiv 0$ implies $W_g \equiv 0$. That means $g\equiv g_H$. 

\end{proof}

\begin{remark}
If a metric \( g \) satisfies only the local volume comparison condition, namely,  for all sufficiently small \( r > 0 \), the volumes of geodesic balls satisfy $\mathrm{Vol}_g(B_r(x)) \geq \mathrm{Vol}_{g_H}(B_r(x))$, then the global volume comparison $\mathrm{Vol}_g(M^4) \geq \mathrm{Vol}_{g_H}(M^4)$ may not hold in general. As a counterexample, consider the metric \( g = g_H/2 \). Under this scaling, the scalar curvature becomes \( \mathrm{Sc}_g = -24 \).  Locally, for small \( r \), the geodesic balls still satisfy
$\mathrm{Vol}_g(B_r(x)) \geq \mathrm{Vol}_{g_H}(B_r(x))$, However, the total volume scales as
$\mathrm{Vol}_g(M^4) =  \mathrm{Vol}_{g_H}(M^4)/4$.
\end{remark}

The argument used in the proof of Theorem~\ref{local} also applies to the case of the \(4\)-sphere.

\begin{corollary}
Let \( g \) be a Riemannian metric with constant scalar curvature \( \mathrm{Sc}_g \equiv 12 \) on a $(S^4,g_{st})$. Assume that for every point \( x \in S^4 \), there exists a sufficiently small radius \( r(x) > 0 \) such that for all \( 0 < r \leq r(x) \), the volumes of the geodesic \( r \)-balls \( B_r(x) \) satisfy
\[
\mathrm{Vol}_g(B_r(x)) \geq \mathrm{Vol}_{g_{st}}(B_r(x)).
\]
Then it follows that
$ \mathrm{Vol}_g(S^4) \geq \mathrm{Vol}_{g_{st}}(S^4)$. Moreover, if equality holds, i.e., \( \mathrm{Vol}_g(S^4) = \mathrm{Vol}_{g_{st}}(S^4) \), then \( g \) is isometric to $g_{st}$.

\end{corollary}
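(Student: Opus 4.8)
The plan is to run the proof of Theorem~\ref{local} essentially verbatim, with $g_H$ replaced by $g_{st}$ and $\mathrm{Sc}\equiv -12$ replaced by $\mathrm{Sc}\equiv 12$. The point that makes this transfer work without change is a numerical coincidence: on a $4$-dimensional space form of constant sectional curvature $\kappa$ one has $|\mathrm{Rm}|^2 = 24\kappa^2$ and $|\mathrm{Ric}|^2 = 36\kappa^2$, so $-3|\mathrm{Rm}|^2 + 8|\mathrm{Ric}|^2 = 216\kappa^2$, which equals $216$ for $\kappa = +1$ just as it did for $\kappa = -1$. First I would write down the asymptotic expansion of $\mathrm{Vol}_g(B_r(x))$ (from \cite{zbMATH03667577}, as in Theorem~\ref{local}) for both $g$ and $g_{st}$. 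Because $\mathrm{Sc}_g\equiv 12 = \mathrm{Sc}_{g_{st}}$, the $r^2$-coefficients coincide, and $\Delta_g\mathrm{Sc}_g = 0$ since $\mathrm{Sc}_g$ is constant; hence the hypothesis $\mathrm{Vol}_g(B_r(x)) \geq \mathrm{Vol}_{g_{st}}(B_r(x))$ for all sufficiently small $r$ forces the pointwise inequality
\[
-3|\mathrm{Rm}_g|^2 + 8|\mathrm{Ric}_g|^2 \;\geq\; -3|\mathrm{Rm}_{g_{st}}|^2 + 8|\mathrm{Ric}_{g_{st}}|^2 = 216
\]
everywhere on $S^4$.

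Next I would substitute the dimension-$4$ decompositions $|\mathrm{Rm}_g|^2 = |W_g|^2 + 2|\mathring{\mathrm{Ric}}_g|^2 + \tfrac16\mathrm{Sc}_g^2$ and $|\mathrm{Ric}_g|^2 = |\mathring{\mathrm{Ric}}_g|^2 + \tfrac14\mathrm{Sc}_g^2$, together with $\mathrm{Sc}_g\equiv 12$, into this inequality. Exactly as in the proof of Theorem~\ref{local}, it collapses to $-3|W_g|^2 + 2|\mathring{\mathrm{Ric}}_g|^2 \geq 0$, that is, $|W_g|^2 \leq \tfrac23|\mathring{\mathrm{Ric}}_g|^2$. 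Feeding this into the Gauss--Bonnet--Chern formula (\ref{4d GBC}) on $S^4$, with $\chi(S^4) = 2$ and $\mathrm{Sc}_g\equiv 12$, gives
\[
64\pi^2 = 32\pi^2\,\chi(S^4) \;\leq\; 24\,\mathrm{Vol}_g(S^4) - \tfrac43\int_{S^4}|\mathring{\mathrm{Ric}}_g|^2\,dv_g \;\leq\; 24\,\mathrm{Vol}_g(S^4).
\]
Since $24\,\mathrm{Vol}_{g_{st}}(S^4) = 24\cdot\tfrac{8\pi^2}{3} = 64\pi^2$, this yields $\mathrm{Vol}_g(S^4) \geq \mathrm{Vol}_{g_{st}}(S^4)$.

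For the rigidity claim, equality $\mathrm{Vol}_g(S^4) = \mathrm{Vol}_{g_{st}}(S^4)$ forces $\int_{S^4}|\mathring{\mathrm{Ric}}_g|^2\,dv_g = 0$, so $\mathring{\mathrm{Ric}}_g\equiv 0$; the pointwise bound then gives $W_g\equiv 0$ as well. In dimension~$4$, $W_g\equiv 0$ and $\mathring{\mathrm{Ric}}_g\equiv 0$ imply $g$ has constant sectional curvature, which the normalization $\mathrm{Sc}_g\equiv 12$ pins down to $1$; since $S^4$ is simply connected and closed, $(S^4,g)$ is therefore isometric to the unit round sphere $(S^4,g_{st})$. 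I do not anticipate a genuine obstacle: the only step needing care is the bookkeeping of the curvature norms of the round sphere to confirm that the constant appearing on the right-hand side of the volume comparison is again $216$ — precisely the coincidence $216\kappa^2|_{\kappa=1} = 216\kappa^2|_{\kappa=-1}$ that lets the hyperbolic argument carry over intact.
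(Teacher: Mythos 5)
Your proposal is correct and follows essentially the same route as the paper: the paper's proof likewise observes that the local volume comparison still yields the pointwise bound $|W_g|^2 \le \tfrac{2}{3}|\mathring{\mathrm{Ric}}_g|^2$ (precisely because $-3|\mathrm{Rm}_{g_{st}}|^2 + 8|\mathrm{Ric}_{g_{st}}|^2 = 216$, the same constant as in the hyperbolic case) and then concludes via the Gauss--Bonnet--Chern formula with $\chi(S^4)=2$, $\mathrm{Vol}_{g_{st}}(S^4)=8\pi^2/3$, plus the constant-curvature rigidity. Your explicit check of the numerical coincidence $216\kappa^2$ is exactly the implicit content of the paper's remark that the argument of Theorem~\ref{local} carries over.
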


\begin{proof}

The assumption of the corollary still implies $|W_{g}|^{2}\;\le\;\frac{2}{3}\,| \mathring{\mathrm{Ric}}_g|^{2}$.
Combining it with  $\chi(S^4)=2$,  $\mathrm{Vol}_{g_{st}}(S^4)=8\pi^2/3$, $\mathrm{Sc}_g \equiv 12$, and the Gauss-Bonnet-Chern formula (\ref{4d GBC}), we have:  
\begin{equation*}
24\mathrm{Vol}_{g_{st}}(S^4)= 32\pi^2 \chi(M^4)\leq  24\mathrm{Vol}_g(M^4)  - \frac{4}{3}\int_M|\mathring{\mathrm{Ric}}_g|^2dv_g 
\end{equation*} 
Hence, \( \mathrm{Vol}_g(M^4) \geq \mathrm{Vol}_{g_{st}}(S^4) \) and holding  the equality implies $\mathring{\mathrm{Ric}}_g \equiv 0$.  Then \( W_g \equiv 0 \equiv \mathring{\mathrm{Ric}}_g \) implies that the sectional curvature of \( g \) is constant. Therefore, \( g \) is isometric to the standard metric \( g_{st} \) on \( S^4 \).

\end{proof}

\begin{proof}[Proof of Theorem \ref{A}]
When the Yamabe metric is Einstein, Proposition \ref{ncsc} applies to complete the proof.
The remaining cases are handled by Theorem \ref{LCF} and Theorem \ref{local}.
\end{proof}

\section{The topology of  \(4\)-manifolds with PSC}\label{3}

Within the known classifications of closed, oriented \(3\)-manifolds admitting Riemannian metrics of positive scalar curvature and of locally conformally flat \(4\)-manifolds with positive scalar curvature (up to diffeomorphism type), one may further ask to classify closed, oriented Riemannian \(4\)-manifolds with positive scalar curvature up to homeomorphism type.  

However, Carr \cite{MR936805} shows that for any nontrivial finitely presented group \(\pi\), there exists a closed Riemannian \(4\)-manifold \(M\) with positive scalar curvature such that \(\pi_1(M)=\pi\). By Markov’s theorem, it follows that closed \(4\)-manifolds with positive scalar curvature cannot be classified up to homeomorphism. In fact, Carr’s result extends to all dimensions \(n>4\): for every finitely presented group \(\pi\) and \(n\ge4\), there exists a closed smooth stably parallelizable Riemannian \(n\)-manifold with positive scalar curvature; moreover, if \(\pi\) contains a subgroup of index two, there also exists a closed smooth non-orientable Riemannian \(n\)-manifold with positive scalar curvature \cite[Theorems 6 and 7]{zbMATH07432160}.  

To circumvent this group-theoretic obstruction, one may instead aim to classify those Riemannian \(n\)-manifolds with a fixed fundamental group. The simplest case is the simply connected one. For \(n\ge5\), it is known that a simply connected smooth manifold \(M^n\) admits a metric of positive scalar curvature if and only if \(M^n\) is non-spin, or it is spin and its \(\alpha\)-invariant vanishes in \(\mathrm{KO}_n\). In particular, since \(\mathrm{KO}_n = 0\) for \(n \equiv 3,5,6,7 \pmod{8}\) by Bott periodicity for the $\mathrm{KO}_n$-theory of a point, every simply connected smooth \(n\)-manifold in these dimensions admits a metric of positive scalar curvature. Consequently, in these cases, the geometric classification problem reduces entirely to a topological one.

The classical results of Freedman and Donaldson in  dimension 4 imply that if 
\( M^4 \) is a smoothable, closed, simply connected, topological \(4\)-manifold, 
then \( M^4 \) is homeomorphic to one of the following:
$S^4$, $\#^m \mathbb{C}P^2 \,\#^n \overline{\mathbb{C}P^2}$, or $  \#^{\pm m} M_{E_8} \,\#^n (S^2 \times S^2),$
for some integers \( m,n \geq 0 \), where \(M_{E_8}\) denotes the topological $4$–manifold whose intersection form is given by the $E_8$ lattice.

 For a simply connected closed \(4\)-manifold \(M^4\), it admits a spin structure if and only if its intersection form is even. The intersection form of $\#^{\pm m} M_{E_8} \,\#^n (S^2 \times S^2)$
has rank \(8m+n\), signature \(\pm 8m\), and is even. Furthermore, by the Hirzebruch signature theorem, the \(\hat{A}\)-genus of a \(4\)-manifold equals \(-\tfrac{1}{8}\) times its signature. Hence, 
\(\#^{\pm m} M_{E_8} \,\#^n (S^2 \times S^2)\) does not admit a metric of positive scalar curvature.  

Therefore, a closed simply connected \(4\)-manifold admitting a metric of positive scalar curvature is homeomorphic to one of the following:
\[
S^4, \quad 
\#^m \mathbb{C}P^2 \,\#^n \overline{\mathbb{C}P^2}, \quad 
\#^n (S^2 \times S^2),
\]
or a connected sum of these manifolds (see \cite[Theorem 7.8]{MR3450199}).  This classification cannot, in general, be improved to the diffeomorphism type.  Indeed, for some \(k\), Teicher \cite[Theorem~5.8]{MR1720873} constructs examples of simply connected complex surfaces of general type which are spin and have vanishing signature.  These manifolds are homeomorphic to \( \#^k (S^2 \times S^2) \), but, being of general type, they do not admit metrics of positive scalar curvature.

The next step is to analyze \(4\)-manifolds which are not simply connected. If its fundamental group is finite, we have the following result.

\begin{proposition}\label{finite}
Let $M^4$ be a connected, closed, non-simply connected Riemannian $4$-manifold with positive scalar curvature and finite fundamental group. Then:  
\begin{enumerate}
\item[(i)] If $M$ is spin, then its universal cover $\tilde{M}$ is homeomorphic to 
$\#^{k} (S^2 \times S^2)$ for some integer $k \ge 1$, where $k = \tfrac{1}{2} b_2(\tilde{M})$.

\item[(ii)] If $M$ is non-spin, then $\tilde{M}$ is homeomorphic to one of the following:
\begin{enumerate}
\item[(a)] $\#^{k} \mathbb{CP}^2 \,\#^{l} \overline{\mathbb{CP}}^{\,2}$, 
 for some non-negative integers $k+l \ge 1$, if $\tilde{M}$ is non-spin;
\item[(b)] $\#^{m} (S^2 \times S^2)$, for some integer $m \ge 0$, if $\tilde{M}$ is spin.
\end{enumerate}
\end{enumerate}
\end{proposition}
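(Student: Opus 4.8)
The plan is to pass to the universal cover and apply the simply connected classification recalled just above. Since $\Gamma:=\pi_1(M^4)$ is finite, the universal covering $p\colon\widetilde{M}\to M$ has finite degree $|\Gamma|$, so $\widetilde{M}$ is a closed simply connected smooth $4$-manifold, and pulling back $g$ along the local isometry $p$ gives a metric of positive scalar curvature on $\widetilde{M}$. Hence $\widetilde{M}$ is homeomorphic to $S^4$, to some $\#^m\mathbb{CP}^2\#^n\overline{\mathbb{CP}^2}$, to some $\#^n(S^2\times S^2)$, or to a connected sum of these; and every such connected sum is again homeomorphic to one of these three types, because a $\mathbb{CP}^2$- or $\overline{\mathbb{CP}^2}$-summand absorbs each $S^2\times S^2$-summand via the classical homeomorphism $\mathbb{CP}^2\#(S^2\times S^2)\cong\mathbb{CP}^2\#\mathbb{CP}^2\#\overline{\mathbb{CP}^2}$. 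Among these models, $S^4$ and the $\#^n(S^2\times S^2)$ carry even intersection form and are spin, whereas $\#^m\mathbb{CP}^2\#^n\overline{\mathbb{CP}^2}$ with $m+n\ge1$ carries odd intersection form and is non-spin.

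For part (i), assume $M$ is spin. Spin structures pull back along coverings, so $\widetilde{M}$ is spin, and hence $\widetilde{M}\cong\#^k(S^2\times S^2)$ for some $k\ge0$. To rule out $k=0$: if $\widetilde{M}\cong S^4$ then $M\cong S^4/\Gamma$, so $\chi(M)=\chi(S^4)/|\Gamma|=2/|\Gamma|$; but $M$ is spin, hence oriented, and $\pi_1(M)$ finite forces $b_1(M)=b_3(M)=0$, giving $\chi(M)=2+b_2(M)\ge2$, which contradicts $2/|\Gamma|\le1$ for $|\Gamma|\ge2$. Therefore $k\ge1$, and from $b_2(\#^k(S^2\times S^2))=2k$ we get $k=\tfrac12 b_2(\widetilde{M})$.

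For part (ii), assume $M$ is non-spin; then $\widetilde{M}$ may or may not be spin. If $\widetilde{M}$ is non-spin, it is neither $S^4$ nor any $\#^n(S^2\times S^2)$, so $\widetilde{M}\cong\#^k\mathbb{CP}^2\#^l\overline{\mathbb{CP}^2}$, and here $k+l\ge1$ is automatic since a non-spin closed oriented $4$-manifold has non-trivial intersection form. If $\widetilde{M}$ is spin, then $\widetilde{M}\cong\#^m(S^2\times S^2)$ for some $m\ge0$; the value $m=0$ (i.e. $\widetilde{M}\cong S^4$) does occur when $M$ is allowed to be non-orientable, e.g. $M=\mathbb{RP}^4$ with the round metric descended from $S^4$ — and if $M$ is required to be oriented, the Euler characteristic argument of (i) forces $m\ge1$, so the stated range $m\ge0$ is correct in either reading.

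The only step that is not purely formal is the rigidity argument excluding $\widetilde{M}\cong S^4$ in the spin case; the remainder is bookkeeping — naturality of spin structures under coverings, multiplicativity of the Euler characteristic, the classification of unimodular forms (indefinite forms are determined by rank, signature and parity), and Freedman's homeomorphism classification — and it uses no geometric input beyond the simply connected positive-scalar-curvature classification established above.
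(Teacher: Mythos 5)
Your proof is correct and follows essentially the same route as the paper: lift the positive scalar curvature metric and the spin structure to the finite universal cover, and filter the Freedman--Donaldson/PSC classification of closed simply connected $4$-manifolds by the parity of the intersection form. The only difference is in one sub-step: you exclude $\tilde{M} \cong S^4$ in the spin case by the Euler characteristic/Poincar\'e duality count, whereas the paper argues that $M$ would then be homeomorphic to $\mathbb{RP}^4$ and hence non-spin --- both work, and your version is if anything the more carefully justified one.
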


\begin{proof}
If $M$ is spin, then its universal cover $\tilde{M}$ is also spin  and the intersection form of $\tilde{M}$ is even. The lifted positive scalar curvature metric implies that the signature of $\tilde{M}$ vanishes. Hence, $\tilde{M}$ is homeomorphic to $\#^{k} (S^2 \times S^2)$.  
If $k = 0$, then $M$ would be homeomorphic to $\mathbb{RP}^4$. However, $\mathbb{RP}^4$ is non-spin, which leads to a contradiction. Therefore, $k \ge 1$.

Now suppose $M$ is non-spin.  
If $\tilde{M}$ is spin, then as above $\tilde{M}$ is homeomorphic to $\#^{m} (S^2 \times S^2)$ for some $m \ge 0$.  
If $\tilde{M}$ is non-spin, then the intersection form of $\tilde{M}$ is odd. By the classification of odd unimodular forms and Freedman's theorem, $\tilde{M}$ must be homeomorphic to a connected sum of copies of the complex projective plane $\mathbb{CP}^2$ and its orientation-reversed version $\overline{\mathbb{CP}}^{\,2}$ for some non-negative integers $k+l \ge 1$.
\end{proof}

\begin{remark}
A manifold satisfying condition (i) or (ii) in Proposition~\ref{finite} does not necessarily admit a metric of positive scalar curvature. There exist closed smooth  $4$-manifolds $M$ (both spin and non-spin) with finite cyclic fundamental group $\pi_1(M)$ that do not admit metrics of positive scalar curvature, while their universal covers do admit such metrics (see, \cite{zbMATH01985408} and \cite{zbMATH02078321}).
\end{remark}

\begin{proposition}\label{cobordant}
Every closed $4$-manifold is cobordant to one admitting a metric of positive scalar curvature.
\end{proposition}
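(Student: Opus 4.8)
The plan is to exploit the fact that the oriented cobordism ring $\Omega^{SO}_*$ is well understood in low degrees, and in particular that $\Omega^{SO}_4 \cong \mathbb{Z}$, generated by $\mathbb{C}P^2$. Since $\mathbb{C}P^2$ carries the Fubini--Study metric, which has positive scalar curvature (indeed positive sectional curvature), every oriented cobordism class in dimension $4$ has a representative admitting a metric of positive scalar curvature: for a class $k[\mathbb{C}P^2]$ with $k \ge 0$ one takes $\#^k \mathbb{C}P^2$, and for $k < 0$ one takes $\#^{|k|}\overline{\mathbb{C}P}^2$ (orientation reversal does not affect the scalar curvature), and these connected sums admit positive scalar curvature metrics by the Gromov--Lawson / Schoen--Yau surgery theorem, since connected sum is surgery in codimension $4$. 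Thus, given a closed oriented $4$-manifold $M^4$, it is cobordant to $\#^{k}\mathbb{C}P^2$ (or $\#^{|k|}\overline{\mathbb{C}P}^2$) where $k = \sigma(M^4)/\sigma(\mathbb{C}P^2) = \sigma(M^4)$, and the latter admits a metric of positive scalar curvature.

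For the non-orientable case (if one wishes to state the proposition for all closed $4$-manifolds, not merely oriented ones), one argues with the unoriented cobordism ring $\Omega^{O}_* = \mathfrak{N}_*$. Here $\mathfrak{N}_4$ is generated by $\mathbb{R}P^2 \times \mathbb{R}P^2$ and $\mathbb{R}P^4$ together with $\mathbb{C}P^2$; each of these carries a homogeneous metric of positive scalar curvature (the standard metrics on $\mathbb{R}P^2$ and $\mathbb{R}P^4$ have positive sectional curvature, hence positive scalar curvature, and a Riemannian product of two positively-curved factors has positive scalar curvature). Disjoint union represents addition in $\mathfrak{N}_*$, and a disjoint union of closed manifolds with positive scalar curvature again has positive scalar curvature, so every unoriented cobordism class in dimension $4$ has a representative with a metric of positive scalar curvature.

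The steps, in order, are: (1) recall $\Omega^{SO}_4 \cong \mathbb{Z}\langle[\mathbb{C}P^2]\rangle$ (and, in the non-orientable case, the structure of $\mathfrak{N}_4$); (2) identify, for a given $M^4$, an explicit representative of its cobordism class built out of $\mathbb{C}P^2$, $\overline{\mathbb{C}P}^2$ (or the unoriented generators) via connected sum (resp. disjoint union); (3) observe that each generator carries an explicit positive scalar curvature metric; (4) invoke the surgery theorem of Gromov--Lawson and Schoen--Yau to propagate positive scalar curvature across the connected sums (the disjoint-union case needs no surgery). I expect step (4) to be the only substantive point, and it is entirely standard: connected sum of $n$-manifolds with $n \ge 3$ is a surgery of codimension $n \ge 3$, so positive scalar curvature is preserved. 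Everything else is bookkeeping in the cobordism ring.
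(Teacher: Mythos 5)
Your proposal is correct and follows essentially the same route as the paper: identify $\Omega_4^{\mathrm{SO}}\cong\mathbb{Z}$ (via the signature, generated by $\mathbb{CP}^2$) and $\Omega_4^{\mathrm{O}}\cong\mathbb{Z}/2\oplus\mathbb{Z}/2$ (generated by $\mathbb{RP}^4$ and $\mathbb{RP}^2\times\mathbb{RP}^2$), then exhibit in each class a representative built from these generators (connected sums of $\mathbb{CP}^2$ or $\overline{\mathbb{CP}}^2$, $S^4$ for the zero class, disjoint unions in the unoriented case), all of which carry positive scalar curvature. The only cosmetic differences are your explicit appeal to the Gromov--Lawson/Schoen--Yau surgery theorem for the connected sums and the (harmless, in fact redundant, since $[\mathbb{CP}^2]=[\mathbb{RP}^2\times\mathbb{RP}^2]$ in $\mathfrak{N}_4$) inclusion of $\mathbb{CP}^2$ among the unoriented generators.
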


\begin{proof}
Rohlin shows that the oriented cobordism group $\Omega_4^{\mathrm{SO}} \cong \mathbb{Z}$ is generated by $\mathbb{CP}^2$, with the isomorphism given by the signature $\sigma(M)$. Hence, two closed oriented $4$-manifolds are cobordant if and only if they have the same signature.  
Manifolds with $\sigma(M) = k > 0$ are cobordant to $\#_k \mathbb{CP}^2$, those with $\sigma(M) = -k < 0$ to $\#_k \overline{\mathbb{CP}}^{\,2}$, and those with $\sigma(M) = 0$ to $S^4$.  
Since these manifolds all admit metrics of positive scalar curvature, it follows that every oriented cobordism class contains a representative with positive scalar curvature.  

For unoriented manifolds, the unoriented cobordism group $\Omega_4^{\mathrm{O}} \cong \mathbb{Z}/2\mathbb{Z} \oplus \mathbb{Z}/2\mathbb{Z}$ admits an analogous description.  
It is generated by the standard representatives $\mathbb{RP}^4$ and $\mathbb{RP}^2 \times \mathbb{RP}^2$, both of which carry metrics of positive scalar curvature.  
The trivial class (the zero element) is represented by $S^4$, or equivalently by any bounding manifold.  
The fourth class, given by the disjoint union $\mathbb{RP}^4 \sqcup (\mathbb{RP}^2 \times \mathbb{RP}^2)$, is cobordant to other manifolds in that class and likewise admits a representative with positive scalar curvature.  

Therefore, in both the oriented and unoriented settings, every $4$-dimensional cobordism class contains a representative admitting a metric of positive scalar curvature.
\end{proof}

\begin{remark}
 Proposition~\ref{cobordant} extends to higher dimensions in the sense that the generators of the \( n \)-dimensional oriented and unoriented cobordism groups admit metrics of positive scalar curvature.  For instance, the Wu manifold \( \mathrm{SU}(3)/\mathrm{SO}(3) \) generates the group \( \Omega_5^{\mathrm{SO}} \). It is non-spin and simply connected. Thus,  it carries a metric of positive scalar curvature. Consequently, any closed, connected, oriented \( 5 \)-manifold is cobordant to a manifold admitting a metric of positive scalar curvature.
\end{remark}

In  dimension 4, Whitney disks may fail to be embedded into a $4$-manifold. However, by taking the connected sum with a sufficient number of copies of $S^2 \times S^2$, one can eliminate the excess intersections, allowing the surgery theory to proceed analogously to the higher-dimensional case.  For a closed oriented $4$-manifold $M$ admitting a metric of positive scalar curvature, one may take the connected sum with sufficiently many copies of $S^2 \times S^2$ and then perform $1$-dimensional surgeries to kill the fundamental group. The resulting simply connected manifold lies in one of the above homeomorphism classes. Hence, the intersection form of $M$ is stably equivalent to one of the canonical forms described below.

\begin{proposition}\label{intersection form}
Let $M$ be a closed oriented $4$-manifold admitting a metric of positive scalar curvature. Then, after stabilization by connected sums with copies of $S^2 \times S^2$ (which adjoin hyperbolic pairs $\mathcal{H}$ to the intersection form), the intersection form of $M$ falls into one of the following stable types:
\begin{itemize}
    \item If the form is \emph{odd} (i.e., not all quadratic values are even), it is stably isomorphic to 
   $ m \langle 1 \rangle \oplus n \langle -1 \rangle$
    for some nonnegative integers $m,n$, with $m - n = \sigma(M)$, where $\sigma(M)$ denotes the signature of $M$.
    \item If the form is \emph{even} (i.e., all quadratic values are even), it is stably isomorphic to 
    $ k \mathcal{H}$  for some nonnegative integer $k$, and $\sigma(M) = 0$.
\end{itemize}
\end{proposition}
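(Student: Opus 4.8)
The plan is to dichotomize on the parity of the intersection form $Q_M$. The odd case turns out to be pure lattice theory and uses nothing about the curvature; the even case is where the positive scalar curvature hypothesis is essential, and it is handled by killing the fundamental group by surgery and invoking the simply connected classification recorded above.

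\emph{Odd case.} If $Q_M$ is odd, then $b_2(M)\ge 1$, so $Q_M\oplus\mathcal{H}$ is an odd, indefinite, unimodular symmetric bilinear form of rank $b_2(M)+2$ and signature $\sigma(M)$. By the classification of indefinite unimodular forms over $\mathbb{Z}$ it is therefore isometric to $m\langle 1\rangle\oplus n\langle -1\rangle$ with $m+n=b_2(M)+2$ and $m-n=\sigma(M)$. Since connected sum with one copy of $S^2\times S^2$ adjoins exactly $\mathcal{H}$ to the intersection form, this is the asserted stable type.

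\emph{Even case.} Assume $Q_M$ is even. Represent a finite generating set of $\pi_1(M)$ by embedded circles --- embeddedness is automatic by general position in dimension four, and each normal $3$-plane bundle is trivial since $M$ is oriented --- and perform the corresponding surgeries one at a time. Each such surgery has codimension $3$, so by the Gromov--Lawson/Schoen--Yau surgery theorem it can be carried out while preserving positive scalar curvature \emph{for either choice of framing}; after finitely many steps one arrives at a closed, oriented, simply connected $4$-manifold $N$ still admitting a positive scalar curvature metric. (One may also stabilize $M$ beforehand by connected sums with copies of $S^2\times S^2$, as in the discussion preceding the statement; this only adjoins $\mathcal{H}$'s.) The point is now to follow $Q$ through the surgeries. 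Arranging the surgery circles to represent primitive, torsion, or trivial homology classes at each stage, a Mayer--Vietoris computation gives: surgery on a circle of infinite order in $H_1(\,\cdot\,;\mathbb{Z})$ leaves the intersection form unchanged (it lowers $b_1$ by one and fixes $b_2$ and $Q$), whereas surgery on a circle of finite order --- zero included --- enlarges $Q$ by a rank-$2$, signature-$0$ block $\bigl(\begin{smallmatrix}0&1\\1&\epsilon\end{smallmatrix}\bigr)$, and the two admissible framings (framings form a $\mathbb{Z}/2$-torsor, the local model being the unknotted circle whose two surgeries produce $S^2\times S^2$ and $\mathbb{CP}^2\#\overline{\mathbb{CP}^2}$) realize $\epsilon=0$ and $\epsilon=1$. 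Taking $\epsilon=0$ at every step, the surgeries adjoin only hyperbolic planes, so $Q_N\cong Q_M\oplus l\,\mathcal{H}$ for some $l\ge 0$; in particular $Q_N$ is again even. But $N$ is closed, oriented, simply connected and admits positive scalar curvature, so by the classification recalled above $Q_N\cong m\langle 1\rangle\oplus n\langle -1\rangle$ with $m+n\ge 1$ or $Q_N\cong k\,\mathcal{H}$; the first alternative is odd, so evenness of $Q_N$ forces $Q_N\cong k\,\mathcal{H}$. Hence $Q_M\oplus l\,\mathcal{H}\cong k\,\mathcal{H}$, which gives $\sigma(M)=\sigma(Q_N)=0$ and, since an even unimodular form of vanishing signature is a sum of hyperbolic planes, $Q_M\cong (k-l)\,\mathcal{H}$.

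The step I expect to be the main obstacle is the framing analysis in the even case: one must guarantee that the $1$-surgeries killing $\pi_1(M)$ contribute only \emph{even} (hyperbolic) blocks to the intersection form. This is exactly what excludes $\pm E_8$ summands and thereby forces $\sigma(M)=0$ once $Q_M$ is even; without it, the signature of a positive scalar curvature $4$-manifold with even intersection form would not visibly vanish. The verification rests on the local model for surgery on a framed $1$-sphere in a $4$-manifold --- the $S^2\times S^2$ versus $\mathbb{CP}^2\#\overline{\mathbb{CP}^2}$ alternative --- together with the insensitivity of the Gromov--Lawson metric construction to the framing; the infinite-order case is a routine Mayer--Vietoris bookkeeping.
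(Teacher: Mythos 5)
Your odd case is correct and is in fact cleaner than the paper's treatment: since $Q_M\oplus\mathcal H$ is odd, indefinite and unimodular, the classification of indefinite unimodular forms gives $m\langle1\rangle\oplus n\langle-1\rangle$ with $m-n=\sigma(M)$ directly, with no use of positive scalar curvature, whereas the paper runs both parities through the surgery argument. Your even case follows the same route as the paper (stabilize, kill $\pi_1$ by Gromov--Lawson surgeries on circles, invoke the simply connected PSC classification), and you correctly isolate the crux: one must show that the surgeries adjoin only even (hyperbolic) blocks, i.e.\ $Q_N\cong Q_M\oplus l\,\mathcal H$. The paper simply asserts this; you try to prove it by a framing analysis, and that is where the gap lies.

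The framing claim does not hold. The local model you invoke ($S^2\times S^2$ versus $\mathbb{CP}^2\#\overline{\mathbb{CP}^2}$) is valid only for nullhomotopic circles; for an essential circle the two framed surgeries need not produce blocks of different parity (they can even yield diffeomorphic manifolds, e.g.\ both surgeries on $S^1\times\{pt\}\subset S^1\times S^3$ give $S^4$, since a rotation twist of $S^1\times S^3$ interchanges the two framings), and for a class of finite order $d\ge2$ the geometric dual surface meets the belt sphere $d$ times, so the naive new block is $\bigl(\begin{smallmatrix}0&d\\ d&\epsilon\end{smallmatrix}\bigr)$ rather than the unimodular block you wrote. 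More decisively, the conclusion you want is impossible in general: if $Q_M$ is even but $M$ is not spin (possible since $\pi_1(M)\neq1$), then $w_2\neq0$ survives every surgery on circles --- the restriction $H^2(M;\mathbb Z/2)\to H^2\bigl(M\setminus\bigsqcup S^1\times D^3;\mathbb Z/2\bigr)$ is injective because $H^2(S^1\times D^3,\partial;\mathbb Z/2)\cong H_2(S^1\times D^3;\mathbb Z/2)=0$ --- so the resulting simply connected $N$ is non-spin and $Q_N$ is odd, while $Q_M\oplus l\,\mathcal H$ is even. Hence no choice of framings can make all added blocks hyperbolic, and your argument (like the paper's unproved assertion $Q_N=Q_M\oplus k\mathcal H$, which has the same defect) establishes the even case only when $M$ is spin. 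In that situation, however, no surgery is needed at all: Lichnerowicz gives $\hat A(M)=-\sigma(M)/8=0$, and an even indefinite unimodular form of signature zero is a sum of hyperbolic planes, so $Q_M\oplus\mathcal H\cong k\,\mathcal H$. The remaining configuration --- even intersection form but non-spin $M$ --- is not covered by your proof (nor by the paper's as written), and would need a genuinely different argument.
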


\begin{proof}
Taking the connected sum $M \# (S^2 \times S^2)$ adjoins a hyperbolic pair 
\[
\mathcal{H} = \begin{pmatrix} 0 & 1 \\ 1 & 0 \end{pmatrix}
\]
to the intersection form $Q_M$ of $M$, preserving both the signature $\sigma(M)$ and the parity (even or odd) of the form.  
After taking the connected sum of \( M \) with sufficiently many copies of \( S^2 \times S^2 \), one can embed the Whitney disks and resolve the framing obstructions.

Since $\pi_1(M)$ is finitely presented, one can represent its generators by disjoint embedded circles in $M$ and perform $1$-dimensional surgeries along framed circles.  
Each surgery eliminates a generator of $\pi_1(M)$, producing a simply connected manifold $N$.  
These surgeries preserve the positive scalar curvature condition so that  its intersection form $Q_N$ must be one of the allowable types: diagonal $\langle \pm 1 \rangle$-forms in the odd case, or a hyperbolic form in the even case with $\sigma(N)=0$.  
Since $Q_N = Q_M \oplus k\mathcal{H}$ for some $k \ge 0$, the stable intersection form of $M$ has the following classification:
\[
\begin{cases}
Q_M \oplus k\mathcal{H} \cong m \langle 1 \rangle \oplus n \langle -1 \rangle, & \text{if $Q_M$ is odd, with } m-n = \sigma(M),\\[6pt]
Q_M \oplus k\mathcal{H} \cong l\mathcal{H}, & \text{if $Q_M$ is even, with } \sigma(M)=0.
\end{cases}
\]
In particular, manifolds with positive scalar curvature cannot contain even definite forms such as $E_8$.  
This completes the proof.

\end{proof}

\begin{remark}
Since the $4$-torus $T^4$ does not admit any metric of positive scalar curvature and its intersection form is $Q_{T^4} = 3\mathcal{H},$  Proposition~\ref{intersection form} does not yield any new obstruction to the existence of a positive scalar curvature metric.
\end{remark}

 A closed manifold \( M \) is called a \emph{splitting manifold} if it is homeomorphic to a product  $M \cong M_1 \times M_2,$ where the topological dimensions of  the manifolds \( M_1 \) and \( M_2 \) are both at least one.

\begin{proposition}\label{split4}
Let \( M^4 \) be a connected, closed, oriented, splitting \(4\)-manifold.  
If \( M^4 \) admits a Riemannian metric of positive scalar curvature, then \( M^4 \) is homeomorphic to one of the following:
\begin{enumerate}
    \item \( S^2 \times \Sigma \), where \( \Sigma \) is a closed oriented surface, or
    \item \( S^1 \times X^3 \), where \( X^3 \) is a connected sum of spherical \(3\)-manifolds, or a  connected sum of spherical manifolds and \( S^2 \times S^1 \) summands.
\end{enumerate}
\end{proposition}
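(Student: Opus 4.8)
The plan is to run through the possible dimensions of the two factors. Write $M^4 \cong M_1 \times M_2$ with $\dim M_i \ge 1$ and $\dim M_1 + \dim M_2 = 4$. Since $M^4$ is closed and oriented, each $M_i$ is closed and oriented, so after reordering we are in one of two cases: $(\dim M_1,\dim M_2) = (2,2)$, where $M_1 = \Sigma_{g_1}$ and $M_2 = \Sigma_{g_2}$ are closed oriented surfaces, or $(\dim M_1,\dim M_2) = (1,3)$, where $M_1 = S^1$ and $M_2 = X^3$ is a closed oriented $3$-manifold.

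In the case $M^4 \cong \Sigma_{g_1}\times\Sigma_{g_2}$, suppose for contradiction that $g_1 \ge 1$ and $g_2 \ge 1$. Each $\Sigma_{g_i}$ then admits a degree-one map onto $T^2$ (collapse all but one handle), so $M^4$ admits a degree-one map onto $T^2 \times T^2 = T^4$; moreover $M^4$, being a product of orientable surfaces, is spin. A closed spin manifold admitting a nonzero-degree map to a torus is enlargeable, and enlargeable manifolds carry no metric of positive scalar curvature (Gromov--Lawson), contradicting the hypothesis. Hence one factor is $S^2$, so $M^4 \cong S^2 \times \Sigma$ for a closed oriented surface $\Sigma$, which is conclusion (1). (Conversely every such product admits positive scalar curvature, obtained by shrinking the round $S^2$ factor until its curvature dominates, so this class is sharp.)

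In the case $M^4 \cong S^1 \times X^3$, conclusion (2) is equivalent to the statement that $X^3$ itself admits a metric of positive scalar curvature: granting this, the classification of closed oriented $3$-manifolds of positive scalar curvature (Gromov--Lawson and Schoen--Yau, together with Perelman's geometrization) identifies $X^3$ with a connected sum of spherical space forms and copies of $S^2\times S^1$, exactly the list in (2). To prove that $X^3$ admits positive scalar curvature I argue by contraposition. If $X^3$ does not, then by the same classification its prime decomposition contains an aspherical summand $K^3$; every closed aspherical orientable $3$-manifold is enlargeable (a consequence of geometrization together with the results of Gromov--Lawson and Schoen--Yau), and the degree-one collapse $X^3 \to K^3$ then makes $X^3$ enlargeable as well. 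Consequently $S^1 \times X^3$, a product of enlargeable manifolds, is enlargeable; being spin (a product of the spin manifold $S^1$ and the parallelizable $X^3$) it therefore admits no metric of positive scalar curvature, contradicting the hypothesis on $M^4$. As an alternative to the enlargeability argument, one may instead run the Schoen--Yau minimal hypersurface descent inside the positive scalar curvature $4$-manifold $S^1 \times X^3$ applied to the homology class Poincar\'e dual to the circle factor, producing a smooth embedded stable minimal $3$-manifold (regular, since $3 < 7$) that carries positive scalar curvature and maps onto $X^3$ with degree one, and then invoke the structure theory of degree-one maps between $3$-manifolds.

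The main obstacle is exactly the implication ``$S^1 \times X^3$ admits positive scalar curvature $\Longrightarrow$ $X^3$ admits positive scalar curvature''; the rest is bookkeeping with the classification of surfaces and of closed oriented PSC $3$-manifolds, together with the routine observation that smoothability, the spin condition, and enlargeability all transport across the given homeomorphism, so the smooth--topological distinction in dimension $4$ causes no difficulty. Both routes through the key step rely on substantial input --- geometrization, to know that every closed aspherical orientable $3$-manifold is enlargeable, or the regularity theory of area-minimizing hypersurfaces in ambient dimension below $8$ --- and one should check that enlargeability is inherited under the degree-one maps that appear, which is unproblematic since all the $3$- and $4$-manifolds involved are spin.
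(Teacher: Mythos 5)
Your proof is correct, but it follows a genuinely different route from the paper's. For the surface--times--surface case the paper rules out $\Sigma_{g_1}\times\Sigma_{g_2}$ with $g_1,g_2\ge 1$ by observing the product is aspherical and citing the Schoen--Yau theorem that closed aspherical $4$-manifolds carry no PSC metric, whereas you use a degree-one collapse onto $T^4$ together with the spin condition and the Gromov--Lawson enlargeability obstruction; both work, and your handling of the homeomorphism-versus-diffeomorphism issue (Stiefel--Whitney classes and nonzero-degree maps are homotopy invariants, continuous maps can be smoothed) is adequate. The real divergence is in the $S^1\times X^3$ case: the paper simply cites Agol's virtual-Haken-based result that $S^1\times X^3$ admits PSC if and only if $X^3$ does, while you prove the needed implication yourself, either by showing that an aspherical prime summand of $X^3$ forces $S^1\times X^3$ to be spin and enlargeable, or by the Schoen--Yau minimal-hypersurface descent in the class Poincar\'e dual to the circle factor followed by the fact that a degree-one image of a PSC (equivalently rationally inessential) $3$-manifold is again rationally inessential. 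The second alternative is essentially the argument the paper itself uses later for mapping tori (its Proposition on the base-$S^1$ case), so it fits the paper's toolkit well; the first alternative is also sound but rests on the heaviest input you invoke, namely that every closed orientable aspherical $3$-manifold is enlargeable, which indeed follows from geometrization combined with Gromov--Lawson's results on Haken $K(\pi,1)$'s, solvmanifolds, and negatively curved manifolds, but requires assembling several cited theorems rather than one. What the paper's citations buy is brevity; what your argument buys is a self-contained proof of the key implication and independence from Agol's theorem. One small point of care: in the enlargeability route you should (as you implicitly do) pass the enlargeability and spin conditions through the homeomorphism via a degree-one map, since the product smooth structure need not be the given one in dimension $4$.
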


\begin{proof}
Without loss of generality, since \( M^4 \) is a splitting \(4\)-manifold, it suffices to consider the cases where \( \dim M_1 = 2 \) or \( \dim M_1 = 1 \).  
Note that for closed manifolds \( M_1 \) and \( M_2 \), the product \( M_1 \times M_2 \) is orientable if and only if both \( M_1 \) and \( M_2 \) are orientable.

\emph{Case 1:} \(\dim M_1 = 2\).  
Then \(\dim M_2 = 2\). Closed, oriented surfaces are classified by their genus. If both \( M_1 \) and \( M_2 \) have genus at least \(1\), then \( M_1 \times M_2 \) is aspherical.  
However, Schoen and Yau \cite[Theorem~6]{zbMATH04075990} (see also \cite{zbMATH07817078} for further details) show that closed aspherical \(4\)-manifolds cannot admit Riemannian metrics of positive scalar curvature.  Therefore, at least one of \( M_1 \) or \( M_2 \) must be \( S^2 \). Without loss of generality, let \( M_1 = S^2 \). In this case \( M^4 \cong S^2 \times \Sigma \), where \( \Sigma \) is a closed surface, and such a product admits a Riemannian metric of positive scalar curvature.

\emph{Case 2:} \(\dim M_1 = 1\).  
Then \( M_1 = S^1 \) and \( M_2 \) is a closed, oriented \(3\)-manifold. By applying the virtual Haken theorem, Agol \cite{215872} proves that \( S^1 \times M_2 \) admits a Riemannian metric of positive scalar curvature if and only if \( M_2 \) does.  
By the geometrization theorem, \( M_2 \) is diffeomorphic to a spherical \(3\)-manifold, to \( S^2 \times S^1 \), or to a connected sum of such manifolds. If \( M_2 = S^2 \times S^1 \), then \( S^1 \times M_2 \) falls into Case~1. Thus, in the present case, \( M^4 \) is homeomorphic to  $S^1 \times X^3,$ where \( X^3 \) is either a connected sum of spherical \(3\)-manifolds, or a connected sum of spherical \(3\)-manifolds together with \( S^2 \times S^1 \) summands.
\end{proof}

\begin{remark}
Proposition~\ref{split4} does not directly generalize to dimension five, even under the assumption that the classification of closed \(4\)-manifolds admitting metrics of positive scalar curvature is known. Indeed, there exist closed \(4\)-manifolds \(M^4\) which do not admit metrics of positive scalar curvature, but for which \(M^4 \times S^1\) does admit such a metric.  For example, let \(M^4\) be a smooth complex hypersurface of odd degree \(\geq 5\) in \(\mathbb{C}P^3\). Then \(M^4\) is a non-spin, simply connected, smooth \(4\)-manifold with a Kähler structure and \(b^+_2 > 1\). Hence, its Seiberg--Witten invariant does not vanish for some \(\mathrm{Spin}^c\)-structure, which implies that \(M^4\) does not admit a metric of positive scalar curvature.  On the other hand, \(M^4 \times S^1\) is a closed, oriented \(5\)-manifold with fundamental group \(\pi_1(M^4 \times S^1) = \mathbb{Z}\). Since this class represents \(0\) in 
$H_5(B\mathbb{Z}; \mathbb{Z}) = 0,$ it follows that \(M^4 \times S^1\) admits a metric of positive scalar curvature (see \cite[Remark~1.25]{zbMATH05342785}). 
\end{remark}

The splitting of the $4$–manifold can be regarded as the total space of a trivial bundle. In the remainder of this subsection, we focus on the non-trivial case.

\begin{proposition}\label{S^2}
The total space of any $S^2$–bundle over a closed, oriented surface, as well as any closed, oriented surface bundle over $S^2$, admits a Riemannian metric of positive scalar curvature.
\end{proposition}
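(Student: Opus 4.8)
The plan is to reduce both assertions to two standard consequences of O'Neill's submersion identities, together with the known homotopy types of the diffeomorphism groups of closed oriented surfaces.

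The first tool is the \emph{linear sphere bundle lemma}: if $E\to B$ is the unit sphere bundle of a Euclidean vector bundle of rank $k\ge 3$ over a closed manifold $B$, then $E$ carries a metric of positive scalar curvature. One fixes a metric on $B$ and a metric connection, equips the $S^{k-1}$-fibres with round metrics of radius $t$, and applies O'Neill's identities to the resulting Riemannian submersion: under this canonical variation $\mathrm{Sc}_{g_t}=t^{-2}(k-1)(k-2)+O(1)$ as $t\to 0$, uniformly on the compact total space, so $\mathrm{Sc}_{g_t}>0$ for $t$ small. The second tool is the \emph{circle bundle lemma}: if $E\to B$ is a principal $S^{1}$-bundle over a closed manifold $B$ admitting a metric $g_B$ of positive scalar curvature, then $E$ carries such a metric; here the Kaluza--Klein metrics $g_t=\pi^{*}g_B+t^{2}\theta^{2}$ (with $\theta$ a principal connection form) have totally geodesic $1$-dimensional fibres, so O'Neill gives $\mathrm{Sc}_{g_t}=\mathrm{Sc}_{g_B}\circ\pi-t^{2}|A|^{2}\to\mathrm{Sc}_{g_B}\circ\pi>0$ as $t\to 0$. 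Both statements are due in essence to Gromov--Lawson and B\'erard Bergery.

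For the first assertion, let $S^{2}\to E\to\Sigma$ be an $S^{2}$-bundle over a closed oriented surface. By Smale's theorem $\mathrm{Diff}^{+}(S^{2})\simeq\mathrm{SO}(3)$, so the structure group reduces to $\mathrm{SO}(3)$ and $E$ is the unit sphere bundle of an oriented Euclidean $3$-plane bundle over $\Sigma$; the linear sphere bundle lemma (with $k=3$) then yields a metric of positive scalar curvature on $E$. This treats the trivial and the twisted $S^{2}$-bundle at once, and with $\Sigma=S^{2}$ it also treats all $S^{2}$-bundles over $S^{2}$. For the second assertion, let $\Sigma_{g}\to E\to S^{2}$ be a closed oriented surface bundle over $S^{2}$; since $S^{2}$ is simply connected, such bundles are classified by $\pi_{1}(\mathrm{Diff}^{+}(\Sigma_{g}))=\pi_{1}(\mathrm{Diff}_{0}(\Sigma_{g}))$. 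If $g=0$ this is an $S^{2}$-bundle over $S^{2}$, already covered. If $g\ge 2$, the Earle--Eells theorem says $\mathrm{Diff}_{0}(\Sigma_{g})$ is contractible, so the only such bundle is the product $\Sigma_{g}\times S^{2}$; the metric $h\oplus t^{2}g_{st}$, with $h$ of constant curvature on $\Sigma_{g}$, has $\mathrm{Sc}=\mathrm{Sc}_{h}+2t^{-2}>0$ for small $t$. If $g=1$, then $\mathrm{Diff}_{0}(T^{2})$ deformation retracts onto the translation torus, so the classifying map $S^{2}\to B\mathrm{Diff}^{+}(T^{2})$ lifts to $B\mathrm{Diff}_{0}(T^{2})\simeq BT^{2}$, and $E$ is the total space of a principal $T^{2}$-bundle over $S^{2}$, hence a fibre product $L_{1}\times_{S^{2}}L_{2}$ of two principal circle bundles over $S^{2}$. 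Now $L_{1}\to S^{2}$ is a principal circle bundle over $S^{2}$, which admits positive scalar curvature, so $L_{1}$ admits positive scalar curvature by the circle bundle lemma; and $E\to L_{1}$ (the pullback of $L_{2}\to S^{2}$) is a principal circle bundle over $L_{1}$, so $E$ admits positive scalar curvature by the circle bundle lemma again. This completes the plan.

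The main obstacle is not a single delicate estimate but rather stating and quoting the two submersion lemmas accurately. In the sphere bundle case one must make precise that the remainder in O'Neill's identity for the canonical variation stays uniformly bounded on the compact total space as the fibres shrink (the fibres of $S(V)\to B$ are \emph{not} totally geodesic, so one uses the general O'Neill formula rather than the simpler totally geodesic version); in the circle bundle case one verifies that the principal $S^{1}$-fibres are totally geodesic for the Kaluza--Klein metric. I would handle these by appealing to the Gromov--Lawson/B\'erard Bergery treatment rather than recomputing. A secondary subtlety is the genus-one identification of an arbitrary $T^{2}$-bundle over $S^{2}$ with a principal $T^{2}$-bundle (equivalently, that its classifying map lifts through $B\mathrm{Diff}_{0}(T^{2})\simeq BT^{2}$ because $\pi_{1}(S^{2})=0$), which is what legitimizes the iterated circle bundle argument.
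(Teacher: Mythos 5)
Your argument is correct, and for the $S^2$-fibre case and the genus $\ge 2$ case it runs parallel to the paper (Smale's theorem to reduce the structure group, Earle--Eells to trivialize surface bundles over $S^2$, genus $0$ folded into the first assertion). Where you genuinely diverge is the torus-bundle case, which is also the longest part of the paper's proof. The paper classifies oriented $T^2$-bundles over $S^2$ by the invariant $d=\gcd(m,n)$ of the Euler class modulo $\mathrm{SL}(2,\mathbb{Z})$ and then identifies the nontrivial total spaces explicitly with $S^1\times L(d,1)$, verifying by hand that $S^1\times L(d,1)$ carries the required principal $T^2$-bundle structure; positive scalar curvature is then read off from the explicit diffeomorphism type. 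You instead reduce the structure group to the translation torus via $\mathrm{Diff}_0(T^2)\simeq T^2$ and the simple connectivity of $S^2$, so the bundle becomes a principal $T^2$-bundle, i.e.\ an iterated principal circle bundle $E\to L_1\to S^2$, and you apply the shrinking-fibre Kaluza--Klein estimate twice --- which is exactly the lemma the paper proves inside Proposition~\ref{S^1-fiber}, so the tool is already available. Your route avoids the diffeomorphism classification of the total spaces altogether and is shorter and more robust (it would work verbatim over any simply connected PSC base); the paper's route costs more work but yields the explicit diffeomorphism types $S^1\times L(d,1)$ as a by-product.

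Two small repairs. First, for the first assertion you reduce to $\mathrm{Diff}^{+}(S^2)\simeq\mathrm{SO}(3)$, which only covers fibre-orientable $S^2$-bundles; over a base of genus $\ge 1$ there exist $S^2$-bundles whose monodromy reverses the orientation of the fibre (nontrivial $w_1$ of the associated rank-$3$ bundle), and the statement claims \emph{any} $S^2$-bundle. Use Smale's theorem in the form $\mathrm{Diff}(S^2)\simeq\mathrm{O}(3)$, as the paper does; your linear-sphere-bundle lemma does not require the Euclidean $3$-plane bundle to be orientable, so nothing else in your argument changes. Second, your parenthetical concern that the fibres of $S(V)\to B$ are not totally geodesic is unnecessary: for the connection metric on an associated bundle whose structure group acts by isometries the fibres are totally geodesic (Vilms), though your $O(1)$ remainder estimate is valid either way, so this does not affect the proof.
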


\begin{proof}
It is known that if $(F,g_F)$ is a compact Riemannian manifold with $\mathrm{Sc}_{g_F}>0$, and if a compact Lie group $G$ acts on $F$ by isometries, then for any compact manifold $B$ and any smooth fibre bundle $E \to B$ with fibre $F$ and structure group $G$, the total space $E$ admits a Riemannian metric of positive scalar curvature.

In  dimension 2, any pair of homeomorphic smooth manifolds are in fact diffeomorphic. 
Consequently, the homeomorphism group \( \mathrm{Homeo}(S^2) \) is homotopy equivalent to the diffeomorphism group \( \mathrm{Diff}(S^2) \).  On the other hand, by Smale’s theorem, $\mathrm{Diff}(S^2)$ is homotopy equivalent to $\mathrm{O}(3)$. This equivalence implies that the classifying space $B\mathrm{Diff}(S^2)$ is homotopy equivalent to $B\mathrm{O}(3)$. Consequently, every smooth $S^2$–bundle over a base manifold, classified by a map into $B\mathrm{Diff}(S^2)$, admits a reduction of its structure group to $\mathrm{O}(3)$.

Since the isometry group of the standard round metric on $S^2$ is precisely the compact Lie group $\mathrm{O}(3)$, the structure group of any $S^2$–bundle over a closed oriented surface can be reduced to $\mathrm{O}(3)$. Therefore,  its total space admits a Riemannian metric of positive scalar curvature.

For the total space of a $\Sigma_g$–bundle over $S^2$, if $g \ge 2$, then the bundle is trivial.
Since $S^1$ is connected, the clutching map $\phi(S^1)$ lies in a single path component 
$C$ of the group of orientation-preserving diffeomorphisms $\mathrm{Diff}^+(\Sigma_g)$. 
Pick any $h \in C$ and multiply $g$ on the left by the constant map $h^{-1}$ (which extends over the upper hemisphere); the resulting clutching loop $h^{-1} g$ now lies in the identity component 
$\mathrm{Diff}_0^+(\Sigma_g)$. 
Similarly, multiplying on the right by a constant if necessary, we may assume $g(1) = \mathrm{id}$. 
Thus, isomorphism classes of $\Sigma_g$–bundles over $S^2$ are classified by $\pi_1(\mathrm{Diff}_0(\Sigma_g)).$ By the Earle–Eells theorem \cite[Theorem 1]{MR212840}, $\mathrm{Diff}_0^+(\Sigma_g)$ is contractible for $g \ge 2$. 
Hence, $\pi_1(\mathrm{Diff}_0(\Sigma_g)) = 0.$ Consequently, there are no nontrivial smooth bundles over $S^2$ with fiber $\Sigma_g$ when $g \ge 2$.

 The total spaces of  non-trivial oriented $T^2$-bundles over $S^2$, classified by the positive integer invariant $d \geq 1$, are diffeomorphic to $S^1 \times L(d, 1)$, where $L(d, 1)$ denotes the lens space $S^3 / \mathbb{Z}_d$. Indeed, oriented $T^2$–bundles over $S^2$ are classified up to isomorphism by non-negative integers $\mathbb{Z}_{\ge 0}$.  The invariant $d$ is the greatest common divisor of $m$ and $n$, where $(m, n) \in H^2(S^2; \mathbb{Z}^2) \cong \mathbb{Z}^2$ is the Euler class, considered modulo the action of $\mathrm{SL}(2, \mathbb{Z})$.  The case $d = 0$ corresponds to the trivial bundle with total space diffeomorphic to $T^2 \times S^2$.  For each $d \ge 1$, there is a unique isomorphism class of non-trivial bundles (see \cite[Section~5.1]{2024arXiv240614138K}).

On the other hand, for $d\geq 1$,  the manifold $S^1 \times L(d,1)$ is the total space of a principal $T^2$–bundle over $S^2$ with Euler class  $(0,d) \in H^2(S^2;\mathbb{Z}^2) \cong \mathbb{Z}^2$.  Indeed,
the lens space $L(d,1) \to S^2$ is the principal $S^1$–bundle with Euler class $d$, obtained from the Hopf fibration $S^1 \to S^3 \xrightarrow{h} S^2$ with $e(h)=1$ by quotienting $S^3$ by the cyclic subgroup 
$\mu_d = \{e^{2\pi i k/d} : k=0,1,\dots,d-1\} \subset S^1,$ which acts freely and diagonally by 
$\zeta \cdot (z_1,z_2) = (\zeta z_1,\zeta z_2).$
The quotient $S^3/\mu_d = L(d,1)$ defines a principal circle bundle $\pi_L : L(d,1) \to S^2$ with Euler class $e(\pi_L)=d$.  

Let $E = S^1 \times L(d,1)$ and $p : E \to S^2$ be $p(u,y) = \pi_L(y)$.  
Writing $T^2 = S^1_a \times S^1_b$, define a right $T^2$–action by $(u,y)\cdot(a,b) = (ua,\, y\cdot b),$
where $y\cdot b$ denotes the principal $S^1_b$–action on $L(d,1)$.  
This action is free and fibrewise transitive, hence $p : E \to S^2$ is a principal $T^2$–bundle.  

Trivializing over discs $D_\pm$ covering $S^2$, with transition on $S^1 = D_+ \cap D_-$ given by $\theta \mapsto e^{id\theta}$ in the $S^1_b$–factor, the clutching map of $p$ is 
$\gamma(\theta) = (1, e^{i d \theta}) \in S^1_a \times S^1_b.$
Thus the classifying map $S^2 \to BT^2 = BS^1 \times BS^1$ has components $(0,d)$, so the Euler class of the bundle is $(0,d)$ (up to orientation).  For $d=1$,  the total space is $S^1\times S^3$ and not diffeomorphic to the trivial product $S^2 \times T^2$.  Hence $S^1 \times L(d,1)$ realizes the unique isomorphism class of oriented principal $T^2$–bundles over $S^2$ with invariant $d$, corresponding—up to an $\mathrm{SL}(2,\mathbb{Z})$ change of fibre basis—to the Euler class  $(d,0)$.

Since the classification is up to bundle isomorphism, and a bundle isomorphism is a fiber-preserving diffeomorphism covering $\mathrm{Id}_{S^2}$, two isomorphic bundles have diffeomorphic total spaces. 
Hence, any oriented $T^2$–bundle over $S^2$ with invariant $d \ge 1$ has total space diffeomorphic to 
$S^1 \times L(d,1)$.

Thus, the total space of any closed, oriented surface $\Sigma_g$–bundle over $S^2$ admits a Riemannian metric of positive scalar curvature.
\end{proof}

\begin{remark}
For the non-orientable case, Gramain gives a new proof of the Earle–Eells theorem and further shows in \cite[Theorem~1]{MR326773} that $\mathrm{Diff}_0(\mathbb{RP}^2)$ is homotopy equivalent to $\mathrm{SO}(3)$, $\mathrm{Diff}_0(\mathcal{K})$ is homotopy equivalent to $\mathrm{SO}(2)$ for the Klein bottle $\mathcal{K}$, and $\mathrm{Diff}_0(N_g)$ is contractible for non-orientable closed surfaces of the non-orientable genus $g \ge 3$. Hence, by the same argument as above, any $N_g$–bundle over $S^2$ is trivial. Consequently, the total space of an $N_g$–bundle over $S^2$ also admits a Riemannian metric of positive scalar curvature.

\end{remark}

\begin{remark}
The above proof  shows that the total space of any \( S^2 \)-bundle over a closed oriented manifold admits a Riemannian metric of positive scalar curvature.   However, there exist manifolds which are total spaces of \( S^7 \)-bundles over \( S^2 \) that do not admit such metrics.  For example, let \( \Sigma^9 \) be the exotic \(9\)-sphere with \( \alpha(\Sigma^9) \neq 0 \). Then $(S^7 \times S^2) \# \Sigma^9$ is the total space of a fibre bundle over \( S^2 \) with fibre \( S^7 \). Nevertheless, \( (S^7 \times S^2) \# \Sigma^9 \) does not admit a Riemannian metric of positive scalar curvature.  On the other hand, the total space $M^4$ of a $\Sigma_g$–bundle over $\Sigma_{g_1}$, with $g > 0$ and $g_1 > 0$, is aspherical. Thus, it does not admit any Riemannian metric of positive scalar curvature. 
\end{remark}

\begin{proposition}\label{S^1-fiber}
Let $M^4$ be a closed, oriented manifold that is the total space of an $S^1$–bundle over a closed, oriented, connected $3$–manifold $N^3$. Then $M^4$ admits a metric of positive scalar curvature if and only if $N^3$ does.
\end{proposition}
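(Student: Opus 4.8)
The plan is to prove the two implications separately, reducing each to facts about $3$–manifolds already at our disposal. Since $M^4$ and $N^3$ are oriented, the bundle is fibrewise orientable, so its structure group reduces to $\mathrm{Diff}^{+}(S^1)\simeq SO(2)$ and we may view $\pi\colon M^4\to N^3$ as a principal $S^1$–bundle. For the implication ``$N^3$ admits a PSC metric $\Rightarrow M^4$ admits a PSC metric'', I would fix a PSC metric $g_N$ on $N^3$ and a principal connection $\eta$, and for $\delta>0$ form the connection metric $g_\delta=\pi^{*}g_N+\delta^{2}\,\eta\otimes\eta$. Each fibre is then a totally geodesic flat circle, so O'Neill's formula for a Riemannian submersion with totally geodesic fibres gives $\mathrm{Sc}_{g_\delta}=\pi^{*}\mathrm{Sc}_{g_N}-\delta^{2}|A|^{2}$, where $|A|^{2}$ is a bounded function determined by the curvature $d\eta$; since $N^3$ is compact with $\mathrm{Sc}_{g_N}>0$, the metric $g_\delta$ has positive scalar curvature for all sufficiently small $\delta$. (Equivalently, writing $N^3$ as a connected sum of spherical space forms and copies of $S^2\times S^1$, one builds $g_\delta$ over the bundles restricted to the prime pieces and glues along the connecting circles by Gromov–Lawson surgery, which has codimension $3$ in $M^4$.)

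For the converse I would argue by contraposition, so assume $N^3$ admits no PSC metric. By the classification of closed oriented PSC $3$–manifolds (Gromov–Lawson, Schoen–Yau, Perelman), the prime decomposition of $N^3$ then contains a summand $N_0$ that is a closed aspherical $3$–manifold with infinite fundamental group. If $N^3=N_0$ is itself prime and aspherical, the homotopy exact sequence of $S^1\to M^4\to N_0$, together with $\pi_i(S^1)=0$ for $i\ge2$ and $\pi_2(N_0)=0$, forces $\pi_i(M^4)=0$ for all $i\ge2$; hence $M^4$ is a closed aspherical $4$–manifold and, by the theorem of Schoen–Yau used in Proposition~\ref{split4}, admits no PSC metric. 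In general, restrict the bundle over $N_0\setminus\mathring B^{3}$ and cap off with a trivial bundle over the removed ball to obtain a principal $S^1$–bundle $M_0^4\to N_0$; by the previous step $M_0^4$ is aspherical with universal cover $\widetilde{N_0}\times\mathbb{R}$, and since the universal cover of a closed aspherical $3$–manifold is hypereuclidean (Gromov–Lawson, via geometrization), $M_0^4$ is enlargeable and hence admits no PSC metric. Lifting the pinch map $N^3\to N_0$ to a map $M^4\to M_0^4$ of nonzero degree then forces $M^4$ to be enlargeable as well, so $M^4$ admits no PSC metric.

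The step I expect to be the main obstacle is the construction of this degree–nonzero lift $M^4\to M_0^4$ in the general case. Over the piece $N_0\setminus\mathring B^{3}$ the two bundles agree, so the lift can be taken to be the identity there; the difficulty is to collapse the part of $M^4$ lying over the complementary summand $N''$ onto a single fibre circle in $M_0^4$ while matching the identity on the common boundary $S^2\times S^1$, which amounts to the cohomological condition that the restricted bundle over $N''\setminus\mathring B^{3}$ carry a map to $S^1$ restricting to the fibre projection on its boundary. This is immediate when the $N''$–component of the Euler class vanishes; when that component is torsion of order $k$ one can first apply the fibrewise $k$–fold covering $z\mapsto z^{k}$ (a map of degree $k$ that multiplies the Euler class by $k$ and hence annihilates the $N''$–component) to reduce to the previous case; and the case of an Euler class of infinite order requires a separate argument still. (When the full Euler class is torsion there is in any event a clean alternative: the bundle pulled back to the universal cover of $N^3$ is trivial, so $\widetilde{M^4}\cong\widetilde{N^3}\times\mathbb{R}$ is hypereuclidean and $M^4$ is directly enlargeable.) Assembling these sub–cases into a uniform argument, and confirming the input that closed non–PSC $3$–manifolds have hypereuclidean universal covers — a consequence of geometrization together with Gromov–Lawson's theory of enlargeability — is where the bulk of the work lies.
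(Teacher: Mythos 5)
Your ``$N^3$ PSC $\Rightarrow M^4$ PSC'' direction is essentially the paper's proof: reduce to a principal $S^1$--bundle using fibrewise orientability, take the connection metric $\pi^*g_N+\delta^2\,\eta\otimes\eta$, and apply O'Neill's formula with totally geodesic flat fibres, shrinking $\delta$. That half is fine.

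The converse is where there is a genuine gap, and it is the one you yourself flag: the construction of a nonzero-degree map $M^4\to M_0^4$ covering the pinch map $N^3\to N_0$ does not exist in general. Write $N^3=N_0\# N''$ and $e=e_0+e''\in H^2(N_0)\oplus H^2(N'')$ for the Euler class. Your collapsing strategy needs a map from $M''=M|_{N''\setminus \mathring B^3}$ to $S^1$ that has nonzero degree on the fibres, i.e.\ a class $\alpha\in H^1(M'';\mathbb{Z})$ pairing nontrivially with the fibre. The Gysin sequence $0\to H^1(N'')\to H^1(M'')\to H^0(N'')\xrightarrow{\,\cup e''\,}H^2(N'')$ shows the image of the fibre-evaluation map is exactly $\{m\in\mathbb{Z}: m\,e''=0\}$; since $H^2(N'';\mathbb{Z})\cong H_1(N'';\mathbb{Z})$ can have free part, $e''$ may have infinite order, and then every class in $H^1(M'')$ vanishes on the fibre. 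So no such collapse exists, and the fibrewise $k$-fold covering trick cannot repair it (it only rescales the evaluation), nor does your parenthetical ``full Euler class torsion'' alternative cover this case. Hence the argument is incomplete precisely in the case you defer, and it is not a routine technicality: some different idea is needed there. There are also secondary points you would have to nail down even in the cases you do treat -- that $M^4$ (spin, since $TM\cong\pi^*TN\oplus\underline{\mathbb{R}}$ and $N^3$ is parallelizable) is in a setting where enlargeability obstructs PSC, and that universal covers of the relevant aspherical pieces (and of connected sums, in your torsion alternative) are hypereuclidean.

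For comparison, the paper avoids the Euler-class case analysis entirely: assuming $N^3$ is not PSC, it writes $N^3=K\# X$ with $K$ aspherical and invokes Hanke's argument (as in Kumar--Sen's Theorem B): $\pi_1$ of the base is a torsion-free $3$-manifold group satisfying the Strong Novikov conjecture, $M^4$ is spin, and the classifying map $M\to K(\pi_1(M),1)$ cannot be homotoped into the $2$-skeleton, so the index-theoretic obstruction rules out PSC on $M^4$ uniformly in the Euler class. If you want to salvage your more geometric route, you would need either a replacement for the degree-nonzero lift when $e''$ has infinite order, or an obstruction (as in the paper) that does not require mapping onto a fixed aspherical total space.
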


\begin{proof}
Assume that $M^4$ admits a metric of positive scalar curvature. For contradiction, suppose that $N^3$ does not admit a metric of positive scalar curvature. Since $N^3$ is closed, oriented, and connected, the classification of $3$–manifolds admitting metrics of positive scalar curvature implies that $N^3$ must contain an aspherical summand in its Kneser–Milnor prime decomposition. In other words, $N^3 = K \# X,$ where $K$ is a closed aspherical $3$–manifold. Since $M^4$ and $N^3$  are  oriented, the $S^1$–bundle is fiberwise orientable.   Hanke’s argument in the proof of Theorem~B of Kumar–Sen \cite{KumarSen2025} applies in this setting and shows that the total space $M^4$ cannot admit a metric of positive scalar curvature. This is a contradiction. For the reader’s convenience, we outline Hanke’s argument as follows.  
Since the fibers of the fibration \( M^4 \to N^3 \) are \(\pi_1\)-null in the total space \(M^4\) and $N^3 = K \# X$, the classifying map  $f_M : M \to K(\pi_1(M),1)$
for the universal cover of \(M\) cannot be homotoped into the \(2\)-skeleton of \(K(\pi_1(M),1)\).  
On the other hand, \(\pi_1(M) = \pi_1(N)\) is a torsion-free \(3\)-manifold group and satisfies the Strong Novikov conjecture.  
Since \(M^4\) is orientable and \(N^3\) is spin, it follows that \(M\) is also spin.  
Therefore, if \(M\) admitted a metric of positive scalar curvature, then the classifying map  
$f_M : M \to K(\pi_1(M),1)$ would be homotopic into the \(2\)-skeleton of \(K(\pi_1(M),1)\), which yields a contradiction.

Suppose that $N^3$ admits a metric of positive scalar curvature. Then one can construct a metric of positive scalar curvature on $M^4$. In fact, this holds in all higher dimensions:
\begin{lemma}
 let $\pi : E \to B$ be a principal $S^1$–bundle over a closed Riemannian manifold $B^n$ with $n \ge 2$. If $B$ admits a Riemannian metric of positive scalar curvature, then the total space $E$ also admits a Riemannian metric of positive scalar curvature.
\end{lemma}

\begin{proof}
Assume $B$ carries a Riemannian metric $g_B$ with $\mathrm{Sc}(g_B)\geq \sigma>0$. Choose a principal connection on $\pi$, given by a connection 
$1$–form $\theta\in\Omega^1(E)$ satisfying 
$\theta(\xi)=1$ on the fundamental vertical vector field $\xi$ and 
$\theta$ is $S^1$–invariant.  
Since the structure group $S^1$ is abelian, its curvature is
$\Omega = d\theta,$ and $\Omega$ is horizontal and basic.  
Hence there exists a unique $2$–form $\omega\in\Omega^2(B)$ such that
$\Omega = \pi^*\omega.$ For $\varepsilon>0$, define the connection metric
$g_\varepsilon := \pi^* g_B + \varepsilon^2\,\theta\otimes\theta.$
Let $H=\ker\theta$ and $V=\ker(d\pi)$ denote the horizontal and vertical 
distributions.  Then $T_pE = H_p\oplus V_p$, and
$\pi_*:H_p\to T_{\pi(p)}B$ is an isometry, so 
$\pi:(E,g_\varepsilon)\to(B,g_B)$ is a Riemannian submersion.

The vertical direction is spanned by $\xi$, and
$g_\varepsilon(\xi,\xi)=\varepsilon^2$, so 
$V:=\xi/\varepsilon$ is vertical unit length.
Since $g_\varepsilon$ and $\theta$ are $S^1$–invariant, $\xi$ is a Killing
field of constant length.  
A standard identity for Killing fields implies 
$\nabla_\xi\xi=0$, hence each fiber $S^1$ is a geodesic submanifold.
Thus the fibers have intrinsic scalar curvature $0$, and O’Neill’s 
$T$–tensor vanishes.

Let $A$ denote O’Neill’s $A$–tensor,
$A_{X}Y := \tfrac12([\tilde X,\tilde Y]^{\mathrm{vert}})$, where \( \tilde X \) and \( \tilde Y \) denote the horizontal lifts of \( X \) and \( Y \), respectively.
For vector fields $X,Y$ on $B$, using $\Omega=d\theta$ and 
$\Omega=\pi^*\omega$, we have
\[
\pi^*\omega(\tilde X,\tilde Y)
= \Omega(\tilde X,\tilde Y)
= -\theta([\tilde X,\tilde Y]).
\]
Since $\theta(\xi)=1$ and $\xi$ spans the vertical bundle,
$
[\tilde X,\tilde Y]^{\mathrm{vert}}
= -\,\pi^*\omega(\tilde X,\tilde Y)\,\xi.
$ Hence
\[
A_{\tilde X}\tilde Y
= -\tfrac12\,\pi^*\omega(\tilde X,\tilde Y)\,\xi
= -\frac{\varepsilon}{2}\,\pi^*\omega(\tilde X,\tilde Y)\,V.
\]

Let $\{e_i\}_{i=1}^n$ be a local $g_B$–orthonormal frame on $B$, and
$\{\tilde e_i\}$ their horizontal lifts.  Then
\[
\|A_{\tilde e_i}\tilde e_j\|_{g_\varepsilon}^2
= \frac{\varepsilon^2}{4}\,\omega(e_i,e_j)^2\circ\pi.
\]
Define the pointwise norm $\|\omega\|_{g_B}^2 := \sum_{i,j=1}^n \omega(e_i,e_j)^2.$
The Hilbert–Schmidt norm of $A$ is then
\[
\|A\|^2 
:= \sum_{i,j=1}^n \|A_{\tilde e_i}\tilde e_j\|_{g_\varepsilon}^2
= \frac{\varepsilon^2}{4}\,\|\omega\|_{g_B}^2\circ\pi.
\]

O’Neill’s scalar curvature formula for a Riemannian submersion with
totally geodesic fibers states:
\[
\mathrm{Sc}(g_\varepsilon)
= (\mathrm{Sc}(g_B)\circ\pi)
  + \mathrm{Sc}_{\mathrm{vert}}
  - \|A\|^2 - \|T\|^2.
\]
Here $\mathrm{Sc}_{\mathrm{vert}}=0$ and $T\equiv0$,
hence
\[
\mathrm{Sc}(g_\varepsilon)
= \Bigl(\mathrm{Sc}(g_B)
        - \tfrac{\varepsilon^2}{4}\,\|\omega\|_{g_B}^2\Bigr)\circ\pi.
\]

Since $B$ is compact, 
$\|\omega\|_{g_B}^2\le K^2$ for some $K>0$.  
Thus for all $p\in E$,
\[
\mathrm{Sc}(g_\varepsilon)(p)
\;\ge\; \sigma - \frac{\varepsilon^2}{4}K^2.
\]
Choose $\varepsilon>0$ small enough that $\frac{\varepsilon^2}{4}K^2 < \frac{\sigma}{2}.$
Then $\mathrm{Sc}(g_\varepsilon) > \sigma/2 > 0$
everywhere on $E$.
Thus $E$ admits a Riemannian metric of positive scalar curvature. 
\end{proof}

Since both $M^4$ and $N^3$ are oriented, the $S^1$–bundle is fiberwise orientable, and every fiberwise orientable $S^1$–bundle admits the structure of a principal $S^1$–bundle. Hence, if $N^3$ admits a metric of positive scalar curvature, the total space $M^4$ also admits a metric of positive scalar curvature.
\end{proof}

\begin{remark}
However, for $n \ge 5$, there exist  $S^1$–bundles whose total spaces $E^n$ admit metrics of positive scalar curvature, while their base manifolds do not.
 For example,  let \( E \) be a nontrivial \( S^1 \)-bundle over the K3 surface with a non-divisible Euler class.  Then \( E \) is a closed oriented  simply connected 5-manifold.  Thus, \( E \) admits a Riemannian metric of positive scalar curvature.   However, the K3 surface itself does not admit such a metric, due to the nonvanishing of its \(\hat{A}\)-genus. Furthermore, for any $n \ge 5$, Kumar and Sen \cite[Theorem~A]{KumarSen2025} use Donaldson divisors to construct closed manifolds serving as base spaces that do not admit metrics of positive scalar curvature, while the total spaces of certain associated $S^1$–bundles do admit metrics of positive scalar curvature.
\end{remark}

\begin{proposition}\label{base manifold}
Let $N^3$ be a closed, oriented, connected $3$–manifold, and let
\[
M^4 = M_\varphi := N \times [0,1] \,/\, (x,1) \sim (\varphi(x),0)
\]
be its mapping torus associated to an orientation-preserving  diffeomorphism $\varphi : N \to N$.  
If $M^4$ admits a Riemannian metric of positive scalar curvature, then $N$ also admits a Riemannian metric of positive scalar curvature.
\end{proposition}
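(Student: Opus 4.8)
The plan is to prove the contrapositive: assuming $N^3$ carries no metric of positive scalar curvature, I show that $M_\varphi$ carries none. By the classification of closed oriented $3$–manifolds of positive scalar curvature (Gromov–Lawson and Schoen–Yau, together with Perelman's geometrization, which identifies the prime pieces with finite fundamental group as spherical space forms), this hypothesis means that the Kneser–Milnor decomposition of $N$ has a prime summand $K$ which is aspherical.

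\emph{Step 1: reduce to a monodromy fixing $K$.} Every self-diffeomorphism of a reducible $3$–manifold is isotopic to one compatible with a fixed prime decomposition, inducing a permutation of the summands. Replacing $\varphi$ by a power $\varphi^m$ --- equivalently, passing to the $m$-fold cyclic cover $M_{\varphi^m}\to M_\varphi$, along which a positive scalar curvature metric would pull back --- I may assume $\varphi^m$ fixes $K$ and, after isotopy and capping off the connected-sum sphere, restricts to an orientation-preserving diffeomorphism $\psi\colon K\to K$. Collapsing the complementary summands fiberwise gives a degree-one map $p\colon M_{\varphi^m}\to M_\psi$ onto the mapping torus of $\psi$.

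\emph{Step 2: $M_\psi$, and hence $M_{\varphi^m}$, is enlargeable.} Because $K$ is aspherical, $M_\psi$ fibers over $S^1=K(\mathbb Z,1)$ with aspherical fiber, so it is a closed aspherical $4$–manifold. Closed aspherical $3$–manifolds are enlargeable (by geometrization: hyperbolic pieces admit nonpositively curved metrics, Seifert and Sol pieces are almost flat or fiber over enlargeable bases, and graph manifolds are assembled from such pieces along $\pi_1$–injective tori). Working in the infinite cyclic cover $\widehat{M_\psi}\cong K\times\mathbb R$ one upgrades this to $M_\psi$: for a fixed metric and $\varepsilon>0$, compose an $\varepsilon$–contracting nonzero-degree map $K'\to S^3$ from a finite cover $K'\to K$ (enlargeability of $K$, applied uniformly to the $\mathbb Z$–periodic family of slice metrics) with a strongly contracting proper map $\mathbb R\to(-1,1)$ and collapse $S^3\times[-1,1]$ to $S^4$, obtaining an $\varepsilon$–contracting map of nonzero compactly supported degree from the cover $K'\times\mathbb R$ of $M_\psi$ to $S^4$. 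Enlargeability is inherited under the degree-one map $p$ by pulling these maps back along the induced cover of $M_{\varphi^m}$, so $M_{\varphi^m}$ is enlargeable.

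\emph{Step 3: conclude.} A mapping torus of an orientable $3$–manifold becomes spin after passing, if necessary, to a double cover (its second Stiefel–Whitney class lies in the image of the Wang connecting map from $H^1$ of the fiber, which a double cover kills), and this cover remains enlargeable; by the Gromov–Lawson theorem a closed spin enlargeable manifold admits no metric of positive scalar curvature. Therefore $M_{\varphi^m}$, and with it $M_\varphi$, admits none, giving the contradiction. I expect the real work to lie in Step 1 --- making precise, via the mapping class group of a reducible $3$–manifold, that a power of $\varphi$ genuinely restricts to a diffeomorphism of $K$ with an honest degree-one map to $M_\psi$ --- and in the enlargeability input of Step 2, both the classical fact that closed aspherical $3$–manifolds are enlargeable and its transfer through the mapping-torus construction. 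An alternative to Steps 2–3 is a codimension-one Schoen–Yau minimal hypersurface descent on $(M_\varphi,g)$, producing a two-sided stable minimal $\Sigma^3$ dual to the fiber that carries positive scalar curvature and hence is a connected sum of spherical space forms and copies of $S^2\times S^1$; extracting a contradiction with the aspherical summand of $N$ from the relation $[\Sigma]=[N]$ in $H_3(M_\varphi;\mathbb Z)$ seems harder to make clean.
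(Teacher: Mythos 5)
Your route is genuinely different from the paper's. You argue the contrapositive via an aspherical prime summand $K\subset N$, a mapping–class–group reduction to the mapping torus $M_\psi$ of $K$, enlargeability of $M_\psi$ transferred through a degree-one map, and finally the Gromov--Lawson spin/Dirac obstruction. The paper instead runs exactly the ``alternative'' you set aside: a Schoen--Yau descent on $(M_\varphi,g)$ produces a two-sided stable minimal hypersurface $\widehat S$ Poincar\'e dual to $f^*\beta$, each component of which carries positive scalar curvature; the clean extraction you worried about is done not through prime decompositions but through rational inessentiality. Concretely: a codimension-one lemma shows $f\circ i$ is null-homotopic on $\widehat S$, the homotopy lifting property pushes $\widehat S$ into the fibre, the Wang sequence (using that $\varphi$ is orientation-preserving, so $\varphi_*=\mathrm{id}$ on $H_3(N;\mathbb{Z})$) gives $j_*[\widehat S]=[N]$ in $H_3(N;\mathbb{Z})$, and naturality of classifying maps then forces $(c_N)_*[N]=0$ in $H_3(B\pi_1(N);\mathbb{Q})$, i.e.\ $N$ is rationally inessential, hence PSC. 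That argument needs no spin structures, no index theory, and no structure theory of $\mathrm{Diff}$ of reducible $3$-manifolds, which is what your Step~1 would require (Hendriks--Laudenbach/C\'esar de S\'a--Rourke type results plus a homotopy-commuting square to get the degree-one map of mapping tori).

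Two points in your sketch are genuine gaps as written, though I believe both are repairable. First, the enlargeability transfer in Step~2: the slice metrics on the infinite cyclic cover $K\times\mathbb{R}$ are only $\mathbb{Z}$-periodic up to isometries built from powers of the monodromy, so the fibre projection $K'\times\mathbb{R}\to K'$ is \emph{not} uniformly Lipschitz for the lifted metric (its norm on $K\times[n,n+1]$ grows like $\mathrm{Lip}(\psi^{\pm n})$); a product-type map ``applied uniformly to the periodic family of slice metrics'' is therefore not $\varepsilon$-contracting globally. The fix is an order-of-quantifiers argument: given $\varepsilon$ and the metric, first fix the window $[-T,T]$ with $T\sim 1/\varepsilon$ on which the circle coordinate must be spread, then absorb the finite distortion $\max_{|n|\le T}\mathrm{Lip}(\psi^{\pm n})$ into the choice of the contraction constant and of the finite cover $K'\to K$. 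Second, the spin claim in Step~3: mapping tori of orientation-preserving diffeomorphisms of oriented $3$-manifolds can fail to be spin (the twisted $S^2$-bundle over $T^2$ is the mapping torus of a rotation family on $S^2\times S^1$), and ``a double cover kills it'' is not justified; what is true is that $w_2(M_{\varphi^k})=\delta_k\bigl((1+\varphi^*+\cdots+(\varphi^*)^{k-1})\theta\bigr)$ for a class $\theta\in H^1(N;\mathbb{Z}/2)$, so a cyclic cover of degree $k$ equal to twice the order of $\varphi^*$ on $H^1(N;\mathbb{Z}/2)$ is spin; you should also say explicitly that PSC lifts to these covers so the contradiction descends. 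Finally, your justification that all closed aspherical $3$-manifolds are enlargeable is too loose: gluing along incompressible tori is not a formal operation on enlargeability, and aspherical graph manifolds need not carry nonpositively curved metrics, so this input needs a genuine citation (e.g.\ the Gromov--Lawson treatment of $K(\pi,1)$ $3$-manifolds and incompressible tori, with infinite spin coverings allowed) rather than the sketch given.
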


\begin{proof}
By the classification of closed oriented $3$--manifolds admitting positive scalar curvature, a closed oriented $3$--manifold $X$ carries a positive scalar curvature metric if and only if it is \emph{rationally inessential}, i.e.
\[
(c_X)_*[X] \;=\; 0 \;\in\; H_3(B\pi_1(X);\mathbb{Q}),
\]
where $c_X:X\to B\pi_1(X)$ is the classifying map of the universal cover.
We will use this characterization for $N$ and for the hypersurfaces arising in the proof.

We begin with a general codimension--$1$ lemma.

\begin{lemma}\label{lem:codim1-PD}
Let $X^n$ be a closed oriented manifold and $a\in H^1(X;\mathbb{Z})$.
Suppose that the Poincaré dual of $a$, $\mathrm{PD}(a)\in H_{n-1}(X;\mathbb{Z})$, is represented by an embedded, oriented, two--sided hypersurface
$i:S\hookrightarrow X,$ where $S$ is a finite disjoint union of connected components.
Then $i^*a = 0 \;\in\; H^1(S;\mathbb{Z}).$
\end{lemma}

\begin{proof}
Since $S$ is an oriented hypersurface in the oriented manifold $X^n$, its normal line bundle is oriented, hence trivial. Thus there exists a global nonvanishing normal vector field along $S$, which we use to push $S$ slightly to obtain a parallel copy $S'$ disjoint from $S$. The submanifolds $S$ and $S'$ are homologous in $X$, so
$[S'] = [S] \;\in\; H_{n-1}(X;\mathbb{Z}).$

Let $[\gamma]\in H_1(S;\mathbb{Z})$ be represented by a loop $\gamma\subset S$, and consider its image $i_*[\gamma]\in H_1(X;\mathbb{Z})$. Since $S'$ and $\gamma$ are disjoint, the intersection number satisfies
$[S']\cdot i_*[\gamma] \;=\; 0.$ Because $[S']=[S]$, we also have $[S]\cdot i_*[\gamma]=0$.

By Poincaré duality, the pairing of $a$ with $i_*[\gamma]$ can be written as
\[
\langle a, i_*[\gamma]\rangle
\;=\;
\mathrm{PD}(a)\cdot i_*[\gamma]
\;=\;
[S]\cdot i_*[\gamma]
\;=\;
0.
\]
On the other hand $\langle i^*a,[\gamma]\rangle = \langle a,i_*[\gamma]\rangle$, so we obtain
$\langle i^*a,[\gamma]\rangle = 0$ for every $[\gamma]\in H_1(S;\mathbb{Z})$. Hence $i^*a=0$ in $H^1(S;\mathbb{Z})$, as claimed.
\end{proof}

Let $f:M^4\to S^1$ be the bundle projection of the mapping torus, and let $\beta\in H^1(S^1;\mathbb{Z})$ be the standard generator. Set
\[
\alpha := f^*\beta \;\in\; H^1(M;\mathbb{Z}).
\]

For a regular value $t_0\in S^1$, the fibre
$N_{t_0} := f^{-1}(t_0) \cong N$
is an embedded oriented $3$--dimensional submanifold of $M^4$. By transversality and Poincaré duality, the pushforward fundamental class $[N_{t_0}]_M\in H_3(M;\mathbb{Z})$ represents $\mathrm{PD}(\alpha)$ up to sign. We fix orientations so that
\[
\mathrm{PD}(\alpha) = [N]_M := \iota_*[N]_N \in H_3(M;\mathbb{Z}),
\]
where $\iota:N\hookrightarrow M$ is the inclusion of a fibre, $[N]_N\in H_3(N;\mathbb{Z})$ is the fundamental class of $N$, and $[N]_M=\iota_*[N]_N$ is its image.

Assume $M$ admits a metric $g$ of positive scalar curvature. By the Schoen--Yau minimal hypersurface argument in dimension $4$, there exists an embedded, oriented, two--sided, stable minimal hypersurface $i:\widehat S\hookrightarrow (M,g)$
such that
\[
i_*[\widehat S] = \mathrm{PD}(\alpha) = [N]_M \in H_3(M;\mathbb{Z}).
\]

The hypersurface $\widehat S$ may be disconnected:
\[
\widehat S = S_1\sqcup\cdots\sqcup S_r,
\]
where each $S_k$ is a closed, oriented, connected $3$--manifold. Each $S_k$ is itself a stable minimal hypersurface in $M$, so (by Schoen--Yau) each $S_k$ admits a metric of positive scalar curvature.

If $[S_k]_M\in H_3(M;\mathbb{Z})$ denotes the pushforward of the fundamental class $[S_k]\in H_3(S_k;\mathbb{Z})$, then
\[
[\widehat S]_M
= i_*[\widehat S]
= \sum_{k=1}^r [S_k]_M
= [N]_M.
\]

Apply Lemma~\ref{lem:codim1-PD} to $a=\alpha$ and the two--sided hypersurface $\widehat S\subset M$. We obtain
\[
i^*\alpha = 0 \;\in\; H^1(\widehat S;\mathbb{Z}).
\]
Since $\alpha=f^*\beta$, this implies
\[
(f\circ i)^*\beta = i^* f^*\beta = i^*\alpha = 0 \;\in\; H^1(\widehat S;\mathbb{Z}).
\]

We have a decomposition
\[
H^1(\widehat S;\mathbb{Z}) \;\cong\; \bigoplus_{k=1}^r H^1(S_k;\mathbb{Z}),
\]
and $(f\circ i)|_{S_k} = f\circ i_k$, where $i_k:S_k\hookrightarrow M$ is the inclusion of the $k$-th component. The equality $(f\circ i)^*\beta = 0$ then yields
\[
(f\circ i_k)^*\beta = 0 \;\in\; H^1(S_k;\mathbb{Z})
\quad\text{for all }k.
\]

For any connected space $X$, homotopy classes of maps $X\to S^1$ are in bijection with $H^1(X;\mathbb{Z})$, via $h\mapsto h^*\beta$. Thus a map $h:X\to S^1$ is null-homotopic if and only if $h^*\beta=0$. It follows that each map
\[
f\circ i_k : S_k\longrightarrow S^1
\]
is null-homotopic. Choosing null-homotopies $H_k:S_k\times[0,1]\to S^1$ and assembling them, we obtain a continuous map
\[
H:\widehat S\times[0,1]\to S^1
\]
such that
\[
H(\cdot,0) = f\circ i,
\qquad
H(\cdot,1)\equiv t_0.
\]
Hence the map $f\circ i:\widehat S\to S^1$ is globally null-homotopic.

Since $f:M\to S^1$ is a fibre bundle, it has the homotopy lifting property. The null-homotopy $H$ lifts to a map
\[
\widetilde H:\widehat S\times[0,1]\longrightarrow M
\]
such that
\[
f\circ \widetilde H = H,
\qquad
\widetilde H(\cdot,0) = i.
\]
Define
\[
j := \widetilde H(\cdot,1): \widehat S\longrightarrow f^{-1}(t_0) =: N_0 \cong N.
\]
Let $\iota:N_0\hookrightarrow M$ denote the inclusion of the fibre. Then $\widetilde H$ is a homotopy between
\[
i \quad\text{and}\quad \iota\circ j : \widehat S\to M.
\]

Passing to $H_3(-;\mathbb{Z})$, the homotopy $i\simeq \iota\circ j$ yields
\[
i_*[\widehat S]
= (\iota\circ j)_*[\widehat S]
= \iota_*\, (j_*[\widehat S])_N,
\]
where $[\widehat S]\in H_3(\widehat S;\mathbb{Z})$ is the fundamental class and $(j_*[\widehat S])_N\in H_3(N;\mathbb{Z})$ its image.

Since
\[
i_*[\widehat S] = [\widehat S]_M = [N]_M = \iota_*[N]_N,
\]
we obtain
\[
\iota_*\, j_*[\widehat S]_N \;=\; \iota_*[N]_N
\quad\in\; H_3(M;\mathbb{Z}).
\]

For the mapping torus \(M_\varphi\) of a fiber bundle \(N^3 \to M^4 \to S^1\), 
the Wang exact sequence gives
\[
\dots \longrightarrow 
H_4(M,\mathbb{Z}) \longrightarrow 
H_3(N,\mathbb{Z}) \xrightarrow{\,1 - \varphi_*\;} 
H_3(N,\mathbb{Z}) \xrightarrow{\,\iota_*\;} 
H_3(M,\mathbb{Z}) 
\longrightarrow \dots
\]
Since \( \varphi : N^3 \to N^3 \) is an orientation-preserving diffeomorphism,  we have $\varphi_* = \text{id}: H_3(N,\mathbb{Z})\to H_3(N,\mathbb{Z})$. Hence, the Wang exact sequence implies that the induced map $\iota_* : H_3(N;\mathbb{Z}) \longrightarrow H_3(M;\mathbb{Z})$
is injective. Therefore we can cancel $\iota_*$ and obtain
\[
(j_*[\widehat S])_N = [N]_N \;\in\; H_3(N;\mathbb{Z}).
\]

Writing $\widehat S = \bigsqcup_{k=1}^r S_k$ and letting $j_k := j|_{S_k}:S_k\to N$, we have
\[
H_3(\widehat S;\mathbb{Z}) \cong \bigoplus_{k=1}^r H_3(S_k;\mathbb{Z}),\qquad
[\widehat S] = \sum_{k=1}^r [S_k],
\]
so that
\[
(j_*[\widehat S])_N
= \sum_{k=1}^r (j_k)_*[S_k]
= [N]_N.
\]

For each $k$, the closed $3$--manifold $S_k$ admits a  metric of positive scalar curvature, hence by the $3$--dimensional classification it is rationally inessential. Writing
\[
c_{S_k}:S_k\longrightarrow B\pi_1(S_k)
\]
for the classifying map of the universal cover, we have
\[
(c_{S_k})_*[S_k] = 0 \;\in\; H_3(B\pi_1(S_k);\mathbb{Q}).
\]

Let $c_N:N\to B\pi_1(N)$ be the classifying map for $N$. Each map $j_k:S_k\to N$ induces a group homomorphism
\[
(j_k)_*:\pi_1(S_k)\to\pi_1(N),
\]
and hence a continuous map
\[
B((j_k)_*):B\pi_1(S_k)\to B\pi_1(N)
\]
such that, up to homotopy,
$c_N\circ j_k \simeq B((j_k)_*)\circ c_{S_k}.$ By naturality on homology (with $\mathbb{Q}$--coefficients),
\[
(c_N)_*\circ (j_k)_*
= (B((j_k)_*))_*\circ (c_{S_k})_*.
\]
Thus, we obtain
\[
(c_N)_*[N]_N
= (c_N)_*\!\Bigl(\sum_{k=1}^r (j_k)_*[S_k]\Bigr)
= \sum_{k=1}^r (c_N)_*(j_k)_*[S_k].
\]
For each $k$,
\[
(c_N)_*(j_k)_*[S_k]
= (B((j_k)_*))_*(c_{S_k})_*[S_k]
= (B((j_k)_*))_*(0)
= 0.
\]
Thus every summand vanishes and
\[
(c_N)_*[N]_N = 0 \;\in\; H_3(B\pi_1(N);\mathbb{Q}).
\]

Hence $N$ is rationally inessential. By the $3$--dimensional classification, this is equivalent to the existence of a positive scalar curvature metric on $N$. This completes the proof.

\end{proof}

On the other hand, if the base \(3\)-manifold \(N\) admits a metric of positive scalar curvature, then one can use  Bamler and Kleiner's result \cite[Theorem 1]{2019arXiv190908710B} to  construct a metric of positive scalar curvature on its mapping torus.

\begin{proposition}\label{mapping torus}
Let $N$ be a closed, oriented $3$-manifold that admits a Riemannian metric of positive scalar curvature. Let $\varphi : N \to N$ be an orientation-preserving diffeomorphism, and let $M = M_\varphi$ be its mapping torus. Then $M$ admits a Riemannian metric of strictly positive scalar curvature.
\end{proposition}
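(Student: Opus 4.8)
The plan is to build a metric of positive scalar curvature on the mapping torus $M_\varphi$ directly, by gluing together a carefully chosen family of positive scalar curvature metrics on $N\times[0,1]$ whose boundary values are compatible with the identification $(x,1)\sim(\varphi(x),0)$. Concretely, fix a metric $g_0$ on $N$ with $\mathrm{Sc}_{g_0}>0$. The pulled-back metric $\varphi^*g_0$ also has positive scalar curvature, so both $g_0$ and $\varphi^*g_0$ lie in the space $\mathcal{R}^+(N)$ of positive scalar curvature metrics on $N$. The key input is the Bamler--Kleiner theorem \cite{2019arXiv190908710B} asserting that $\mathcal{R}^+(N)$ is contractible, and in particular path-connected: there is a smooth path $t\mapsto g_t$, $t\in[0,1]$, in $\mathcal{R}^+(N)$ with $g_0$ at one end and $\varphi^*g_0$ at the other.

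Next I would promote this path to a metric on the cylinder. Using the standard observation that a continuous (hence, after smoothing, smooth) path in $\mathcal{R}^+(N)$ can be reparametrised to be constant near the endpoints, choose $g_t$ to equal $g_0$ for $t\in[0,\varepsilon]$ and $\varphi^*g_0$ for $t\in[1-\varepsilon,1]$. On $N\times[0,1]$ consider the metric $G_\lambda := g_t + \lambda^2\,dt^2$. A direct computation of the scalar curvature of such a "warped-in-$t$'' family shows that $\mathrm{Sc}_{G_\lambda} = \mathrm{Sc}_{g_t} + O(\lambda^2)$ uniformly on the compact manifold $N\times[0,1]$ (the $\lambda^{-2}$-order terms cancel because $dt$ has constant length and $\partial_t$ is a geodesic field for $G_\lambda$, and the remaining corrections involve $\partial_t g_t$, $\partial_t^2 g_t$ and are multiplied by $\lambda^{-2}\cdot\lambda^{?}$ — more precisely, writing the submersion $N\times[0,1]\to[0,1]$ one gets $\mathrm{Sc}_{G_\lambda}=\mathrm{Sc}_{g_t}-\lambda^{-2}\big(\text{terms in }\partial_t g_t,\partial_t^2 g_t\big)$, so one instead rescales the base, i.e. takes $\lambda$ large, equivalently replaces $[0,1]$ by $[0,L]$ with $L$ large and the path correspondingly slowed down). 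In any case, for $L$ sufficiently large the metric $g_{t/L}+dt^2$ on $N\times[0,L]$ has $\mathrm{Sc}>0$ everywhere, and it is a product metric $g_0+dt^2$ near $t=0$ and $\varphi^*g_0+dt^2$ near $t=L$.

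Finally I would glue. The diffeomorphism $\Phi\colon N\times\{L\}\to N\times\{0\}$, $(x,L)\mapsto(\varphi(x),0)$, pulls the product metric $g_0+dt^2$ near $t=0$ back to $\varphi^*g_0+dt^2$ near $t=L$, which is exactly the metric we arranged there; hence the metric on $N\times[0,L]$ descends to a smooth metric on $M_\varphi = N\times[0,L]/(x,L)\sim(\varphi(x),0)$, and it has positive scalar curvature because that property is local and holds on the cylinder. (Orientation-preservation of $\varphi$ is used only to know $\varphi^*g_0$ is again an honest Riemannian metric in the same component and, if desired, to keep $M_\varphi$ oriented; it is not otherwise essential to the scalar-curvature argument.) The main obstacle is the curvature bookkeeping in the second step: one must verify that slowing the path down (taking $L$ large) genuinely controls all the derivative terms $\partial_t g_t$ and $\partial_t^2 g_t$ appearing in $\mathrm{Sc}_{G}$ — this is where one needs the path $t\mapsto g_t$ to be smooth (not merely continuous) and where the contractibility statement of Bamler--Kleiner, rather than mere path-connectedness, is convenient, since it lets one smooth the path and adjust it near the endpoints without leaving $\mathcal{R}^+(N)$. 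Once that estimate is in hand, the gluing is formal.
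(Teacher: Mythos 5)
Your proposal is correct and follows essentially the same route as the paper: use Bamler--Kleiner path-connectedness of $\mathcal{R}^+(N)$ to join $g_0$ to its pullback under $\varphi$, build a positive scalar curvature metric on the cylinder $N\times[0,L]$ that is a product near both ends, and glue via $\varphi$ acting as an isometry between the two boundary collars. The only difference is that where the paper cites the isotopy-implies-concordance result and a gluing/smoothing reference, you prove the concordance step directly by the standard Gromov--Lawson stretching argument ($g_{t/L}+dt^2$ with $L$ large and the path constant near the endpoints), which is a valid and self-contained substitute.
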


\begin{proof}
 Let $\mathcal{R}^+(X)$ be the space of smooth Riemannian metrics with positive scalar curvature, endowed with the $C^\infty$ topology.  Bamler and Kleiner \cite[Theorem 1]{2019arXiv190908710B} show that, for a connected, oriented, closed smooth $3$-manifold $X^3$, the space $\mathcal{R}^+(X^3)$ is either empty or contractible in the $C^\infty$ topology. Hence, $\mathcal{R}^+(N)$ is  path-connected.

Fix a reference  metric $h \in \mathcal{R}^+(N)$. Because $\varphi$ is a diffeomorphism, the pullback $\varphi^{-1*} h$ also lies in $\mathcal{R}^+(N)$.  Since $\mathcal{R}^+(N)$ is  path-connected, one has  a continuous path
$ \gamma : [0,1] \to \mathcal{R}^+(N),$ $
  \gamma(0) = h,$ $\gamma(1) = \varphi^{-1*}h,$
continuous with respect to the $C^\infty$ topology.  Since any continuous path of positive scalar curvature metrics in   $\mathcal{R}^+(N)$ can be approximated by a smooth one, the metrics $h$ and  $\varphi^{-1*}h$ are isotopic.

Two metrics \( g_0, g_1 \in \mathcal{R}^+(N) \) are said to be \emph{concordant} if there exists a positive scalar curvature metric  $\bar{g}$ on $ N\times I$ such that \( \bar{g}|_{N \times \{0\}} = g_0 \), \( \bar{g}|_{N \times \{1\}} = g_1 \), and \( \bar{g} \) is a product metric in neighborhoods of the boundary components \( N \times \{0\} \sqcup N \times \{1\} \).

Since \( h \) and \( \varphi^{-1*}h \) are isotopic, they are also concordant (see \cite[Proposition~3.3]{MR1818778}).  
Thus, there exists a metric of positive scalar curvature \( \bar{g} \) on \( N \times I \) such that  $\bar{g}|_{N \times \{0\}} = h,$ $\bar{g}|_{N \times \{1\}} = \varphi^{-1*}h,$
and \( \bar{g} \) is a product metric near the boundary  components \( N \times \{0\} \sqcup N \times \{1\} \).  
In particular, the boundary components are totally geodesic, i.e., their second fundamental form vanishes.  

Moreover, since \( \varphi : (N,h) \to (N,\varphi^{-1*}h) \) is an isometry, we can glue  
\((N \times I, \bar{g})\) along the identification \((x,0) \sim (\varphi(x),1)\).  
Equivalently, we glue \( (N \times [0,\epsilon), h\oplus dt^2) \) and \( (N \times (1-\epsilon,1], \varphi^{-1*}h \oplus dt^2)) \) along their boundaries via the isometry \( \varphi \).  
This produces a Riemannian metric on the mapping torus \( M_\varphi \).  
In fact, the resulting gluing metric can be smoothed to a Riemannian metric of positive scalar curvature on \( M_\varphi \) (see \cite[Theorem~A]{2023arXiv230806996R}).
\end{proof}

\begin{remark}
There exists a closed, connected, simply connected smooth \(4\)-manifold \(X^4\) for which \(\mathcal{R}^+(X^4)\) has more than one path component.  
Hence the argument of Proposition~\ref{mapping torus} does not extend to dimension \(5\).
\end{remark}

\begin{proof}[Proof of Theorem \ref{B}]
If the bundle is trivial, Proposition \ref{split4} implies the conclusion.  
If the fiber or the base is $S^2$, Proposition \ref{S^2} implies the conclusion.  
If the fiber is $S^1$, Proposition \ref{S^1-fiber} implies the conclusion.  

Any smooth fiber bundle $\pi: M \to S^1$ with fiber $N$ is isomorphic to the mapping torus $M_\varphi$ of a diffeomorphism $\varphi: N \to N$.  
Thus, when the base is $S^1$, $M^4$ is the mapping torus of $N^3$, and Propositions \ref{base manifold} and \ref{mapping torus} complete the proof.
\end{proof}

It remains an open problem to classify the topological types of closed $4$–manifolds that admit scalar-flat metrics but no metrics of positive scalar curvature. In other words, the topological classification of Ricci-flat $4$–manifolds is still not fully understood. In particular, no examples are known of closed simply connected Ricci-flat manifolds with generic holonomy, and the existence of a Ricci-flat metric on $S^n$ for $n \ge 4$ remains a major open problem.

\section{Rigidity theorems about scalar curvature}\label{4}

Obata \cite[Proposition 6.1.]{zbMATH03374588}  shows that if $g \in [g_{st}]$ with $Sc_g\equiv n(n-1)$ on $S^n$, then the metric $g$ is isometric to  $g_{st}$. In fact, if $(M^n, g)$ is Einstein with positive scalar curvature, then $g$ is the unique constant scalar curvature metric in its conformal class (up to  a scaling factor). However, on $S^n$, if the conformal class of a metric does not contain  an Einstein metric, then the number of constant positive scalar curvature metrics within  the conformal class may  not be unique up to  a scaling factor.

The above result of Obata is a clue for Obata's solution of the conjecture on conformal transformation of Riemannian manifold and the proof of the sharp Sobolev inequality on ($S^n,g_{st}$) (see, \cite[P. 121, Theorem 5.1]{MR1688256}), which means that every smooth function $f \in C^{\infty}(S^n)$ satisfies  the following inequality:
\begin{equation*}
(\int_{S^n}|f|^{\frac{2n}{n-2}}dv_{g_{st}})^\frac{n-2}{n}\leq \frac{4}{n(n-2)\mathrm{Vol}_{g_{st}}(S^n)^{\frac{2}{n}}}\int_{S^n}|\nabla f|^2_{g_{st}}dv_{g_{st}}  + \frac{1}{\mathrm{Vol}_{g_{st}}(S^n)^{\frac{2}{n}}} \int_{S^n}f^2dv_{g_{st}}.
\end{equation*}
Equality holds if and only if $f$ is a constant. Conversely, the sharp Sobolev inequality can be used to establish the following weaker version of Obata's result.

\begin{theorem}\label{LCF n}
Let ($M^n,g$) ($n\geq 3$) be a oriented, closed, simply connected, locally conformally flat manifold  with $\mathrm{Sc}_g = n(n-1)$, then \( \mathrm{Vol}_g(M) \geq \mathrm{Vol}_{g_{st}}(S^n) \). If the equality holds, then ($M^n,g$) is isometric to ($S^n,g_{st}$).
\end{theorem}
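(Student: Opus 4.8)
The plan is to transport the problem to the round sphere via Kuiper's theorem and then play the conformal scalar-curvature equation against the sharp Sobolev inequality recorded above. Since $(M^n,g)$ is closed, simply connected and locally conformally flat, Kuiper's theorem gives a conformal diffeomorphism of $(M^n,g)$ onto $(S^n,g_{st})$; pulling the standard metric back along it, $(M^n,g)$ is isometric to $(S^n,\hat g)$ with $\hat g=u^{4/(n-2)}g_{st}$ for some positive $u\in C^\infty(S^n)$. In particular $\mathrm{Vol}_g(M)=\int_{S^n}u^{2n/(n-2)}\,dv_{g_{st}}$, so it suffices to bound this integral from below by $\mathrm{Vol}_{g_{st}}(S^n)$.

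Write $p=\tfrac{2n}{n-2}$. The hypotheses $\mathrm{Sc}_{g_{st}}=\mathrm{Sc}_{\hat g}=n(n-1)$ together with the conformal transformation law for scalar curvature give
\begin{equation*}
\frac{4(n-1)}{n-2}\,\Delta_{g_{st}}u+n(n-1)\,u=n(n-1)\,u^{p-1},
\end{equation*}
where $\Delta_{g_{st}}$ denotes the positive Laplacian. Multiplying by $u$ and integrating over the closed manifold $S^n$ yields
\begin{equation*}
\frac{4}{n(n-2)}\int_{S^n}|\nabla u|^2\,dv_{g_{st}}+\int_{S^n}u^2\,dv_{g_{st}}=\int_{S^n}u^{p}\,dv_{g_{st}}.
\end{equation*}
Next I would apply the sharp Sobolev inequality on $(S^n,g_{st})$ to the function $u$ and multiply both sides by $\mathrm{Vol}_{g_{st}}(S^n)^{2/n}$; the right-hand side thus obtained is exactly the left-hand side of the displayed identity. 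Hence, setting $I:=\int_{S^n}u^{p}\,dv_{g_{st}}=\mathrm{Vol}_g(M)>0$, we get $\mathrm{Vol}_{g_{st}}(S^n)^{2/n}\,I^{2/p}\le I$, and since $1-\tfrac{2}{p}=\tfrac{2}{n}$ this is equivalent to $\mathrm{Vol}_{g_{st}}(S^n)^{2/n}\le I^{2/n}$, i.e. $\mathrm{Vol}_g(M)\ge\mathrm{Vol}_{g_{st}}(S^n)$.

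For the rigidity statement, if $\mathrm{Vol}_g(M)=\mathrm{Vol}_{g_{st}}(S^n)$ then equality holds in the sharp Sobolev inequality for $u$; equivalently, $\hat g=u^{4/(n-2)}g_{st}$ is a Yamabe minimizer in $[g_{st}]$ of constant scalar curvature $n(n-1)$. By the classification of extremals of the sharp Sobolev inequality on $S^n$ (equivalently, of Yamabe minimizers in $[g_{st}]$), $\hat g$ is a round metric; since its scalar curvature equals $n(n-1)$ it must be $g_{st}$ itself, so $(M^n,g)$ is isometric to $(S^n,g_{st})$. If one prefers the characterization recalled above that equality in the sharp Sobolev inequality forces $u$ to be constant, the equation then gives $u\equiv1$ at once. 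I do not expect a serious obstacle: once the Kuiper reduction is made the computation is routine, and the only delicate point is the equality case, where one must keep in mind that the extremals of the sharp Sobolev inequality on $S^n$ a priori form a conformal family rather than merely the constants — but each such extremal still yields a round metric of the prescribed scalar curvature, so the rigidity conclusion holds in every case.
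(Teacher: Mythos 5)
Your proposal is correct and follows essentially the same route as the paper: Kuiper's theorem reduces the problem to $\hat g=u^{4/(n-2)}g_{st}$ on $S^n$, the conformal scalar-curvature equation is multiplied by $u$ and integrated, and the sharp Sobolev inequality then yields $\mathrm{Vol}_g(M)\ge \mathrm{Vol}_{g_{st}}(S^n)$. Your handling of the equality case is in fact slightly more careful than the paper's, which relies on the assertion that Sobolev extremals on $S^n$ are only the constants; your remark that the extremals a priori form a conformal family, each of which still pulls back the round metric and hence, with $\mathrm{Sc}=n(n-1)$ fixed, gives a metric isometric to $g_{st}$, secures the rigidity conclusion in every case.
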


\begin{proof}
Since a closed, simply connected, locally conformally flat manifold is conformally diffeomorphic to the round sphere ($S^n, g_{st}$), the above assumptions guarantee the existence of a positive smooth function \( u \) on \( S^n \) and a diffeomorphism \( \phi: (S^n, g_{st}) \to (M,g) \) such that    $\phi^*g=u^{4/(n-2)}g_{st}$.  We continue to denote \( \phi^*g \) by \( g \).   Moreover, the assumption \( \mathrm{Sc}_g \equiv n(n-1) \), together with the formula for scalar curvature under a conformal transformation, implies that the function \( u \) must satisfy the following equation:
\begin{equation}\label{n(n-1)}
\frac{4(n-1)}{(n-2)}\Delta_{g_{st}}u  +  n(n-1)u =n(n-1)u^{\frac{n+2}{n-2}}.
\end{equation}

On the other hand, the function  $u$ also satisfies the sharp Sobolev inequality,
\begin{equation*}
(\int_{S^n}u^{\frac{2n}{n-2}}dv_{g_{st}})^\frac{n-2}{n}\leq \frac{1}{n(n-1)\mathrm{Vol}_{g_{st}}(S^n)^{\frac{2}{n}}}\int_{S^n}(\frac{4(n-1)}{n-2}|\nabla u|^2_{g_{st}}+n(n-1)u^2)dv_{g_{st}}.
\end{equation*}
Equality holds if and only if $u$ is a constant.  Multiplying equation (\ref{n(n-1)}) by \( u \) and integrating, we obtain:
\begin{equation*}
\int_{S^n}(\frac{4(n-1)}{n-2}|\nabla u|^2_{g_{st}}+n(n-1)u^2)dv_{g_{st}}=n(n-1)\int_{S^n}u^{\frac{2n}{n-2}}dv_{g_{st}}.
\end{equation*}
Hence, we have 
\begin{equation}\label{Sobolev}
(\int_{S^n}u^{\frac{2n}{n-2}}dv_{g_{st}})^\frac{n-2}{n}\leq \frac{1}{\mathrm{Vol}_{g_{st}}(S^n)^{\frac{2}{n}}}\int_{S^n}u^{\frac{2n}{n-2}}dv_{g_{st}}.
\end{equation}

Since 
\begin{equation*}
  \mathrm{Vol}_g(M)=\int_{S^n}u^{\frac{2n}{n-2}}dv_{g_{st}},
\end{equation*}
thus, inequality  (\ref{Sobolev}) means
\begin{equation*}
 \mathrm{Vol}_g(M)^\frac{n-2}{n} \leq \frac{\mathrm{Vol}_g(M)}{\mathrm{Vol}_{g_{st}}(S^n)^{\frac{2}{n}}}
\end{equation*}
That means \( \mathrm{Vol}_g(M) \geq \mathrm{Vol}_{g_{st}}(S^n) \). If \( \mathrm{Vol}_g(M) = \mathrm{Vol}_{g_{st}}(S^n) \), then the sharp Sobolev inequality must achieve equality, which implies \( u \equiv 1 \), i.e., \( (M^n, g) \) is isometric to \( (S^n, g_{st}) \).

\end{proof}

\begin{remark}
The assumption of local conformal flatness in Theorem~\ref{LCF n} is necessary in odd dimensions. Indeed, for any \( k \geq 1 \), there exists a sequence of metrics \( \{g_i\}_{i \in \mathbb{N}} \) on \( S^{2k+1} \) with constant scalar curvature \( n(n-1) \), not conformally equivalent to each other, such that the volume of \( g_i \) goes to 0 as \( i \to \infty \).  However, it remains unclear to the author whether the assumption of local conformal flatness in Theorem~\ref{LCF n} is also necessary in even dimensions.  
\end{remark}

\begin{remark}
 If we require that metrics $\{{g_i}\}_{i\in \mathbb{N}}$ with $\mathrm{Sc}_{g_i} \equiv n(n-1)$ on $S^n$  are in a conformal class  $[g]$, then the volumes of $g_i$ are bounded below by the Yamabe constant $Y(S^n,[g])>0$. However, for a given $n\geq 25$,  Gong-Li \cite{MR4937973} construct  a smooth Riemannian metric $\tilde{g} $ on $S^n$ such that there exists a sequence of smooth metrics $\{{g_i}\}_{i\in \mathbb{N}}$ with $\mathrm{Sc}_{g_i} = n(n-1)$ in the conformal class  $[\tilde{g}]$ and their volumes $\mathrm{Vol}_{g_i}(S^n)$ tend to infinity as $i \to +\infty$. 
\end{remark}

\begin{remark}
 Assume a Yamabe metric $g$ on  $(S^4,g_{st})$  satisfies $\mathrm{Sc}_g=12$ and $\mathrm{Vol}_g(S^4)=\mathrm{Vol}_{g_{st}}(S^4)$, then we have $Y([g])=12\mathrm{Vol}_g(S^4)^{1/2}=12\mathrm{Vol}_{g_{st}}(S^4)^{1/2}=Y([g_{st}])$. Thus, we have $[g]=[g_{st}]$. Furthermore, by Obata's theorem,  the metric $g$ is isometric to the standard round metric $g_{st}$. The assumptions \( \mathrm{Sc}_g = 12 \) and \( \mathrm{Vol}_g(S^4) = \mathrm{Vol}_{g_{st}}(S^4) \) imply that
\[
\int_{S^4} |W_g|^2\, dv_g = 2 \int_{S^4} |\mathring{\mathrm{Ric}}_g|^2\, dv_g,
\]
which in turn motivates the following conjecture.
\end{remark}

\begin{conjecture} \label{4D CSC}
Assume that \( M^4 \) is a closed, connected, oriented, smooth \( 4 \)-manifold, and let \( g \) be a Yamabe metric on \( M^4 \) satisfying
\[
\int_{M^4} |W_g|^2\, dv_g = 2 \int_{M^4} |\mathring{\mathrm{Ric}}_g|^2\, dv_g.
\]
Then the sectional curvature of \( (M^4, g) \) is constant.

\end{conjecture}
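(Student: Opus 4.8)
The plan is to reduce the conjecture, via the Gauss--Bonnet--Chern formula, to the resolution of the Yamabe problem and Obata's theorem. First I would substitute the hypothesis $\int_{M^4}|W_g|^2\,dv_g=2\int_{M^4}|\mathring{\mathrm{Ric}}_g|^2\,dv_g$ into the formula (\ref{4d GBC}); the traceless-Ricci and Weyl terms then cancel and it collapses to
\[
32\pi^2\chi(M)=\tfrac16\int_{M^4}\mathrm{Sc}_g^2\,dv_g .
\]
Since $g$ is a Yamabe metric, $\mathrm{Sc}_g\equiv c$ is constant, so this reads $c^2\,\mathrm{Vol}_g(M^4)=192\pi^2\chi(M)$. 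If $c=0$ this forces $\chi(M)=0$; if $c\neq 0$ then $\chi(M)>0$, and because $g$ attains the Yamabe constant of its class, $Y(M^4,[g])^2=c^2\,\mathrm{Vol}_g(M^4)=192\pi^2\chi(M)$.

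\emph{The positive case.} Assume $c>0$. Aubin's inequality $Y(M^4,[g])\le Y(S^4)=8\sqrt{6}\,\pi$ gives $192\pi^2\chi(M)\le 384\pi^2$, hence $\chi(M)\le 2$; with $\chi(M)>0$ this leaves $\chi(M)\in\{1,2\}$. In the case $\chi(M)=2$ we have $Y(M^4,[g])=Y(S^4)$, so the rigidity case of the Aubin--Schoen inequality (via the positive mass theorem) forces $(M^4,[g])$ to be conformally equivalent to the round $4$-sphere; in particular $M^4\cong S^4$ and $[g]=[g_{st}]$. Since $g\in[g_{st}]$, after rescaling so that $\mathrm{Sc}_g\equiv 12$, Obata's theorem shows $g$ is isometric to $g_{st}$, which has constant sectional curvature. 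This settles the conjecture whenever $c>0$ and $\chi(M)=2$.

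\emph{The first obstacle.} The remaining subcase $c>0$, $\chi(M)=1$ is the first serious difficulty: now $Y(M^4,[g])=8\sqrt{3}\,\pi<Y(S^4)$, so $(M^4,[g])$ need not be conformally round, and one must rule out that any Yamabe metric with this exact Yamabe constant exists on an oriented $4$-manifold with $\chi=1$. My attempt would be to sharpen Aubin's bound under the hypothesis, using the conformally invariant quantity
\[
\int_{M^4}\Bigl(\tfrac{1}{24}\mathrm{Sc}_g^2-\tfrac12|\mathring{\mathrm{Ric}}_g|^2\Bigr)dv_g=8\pi^2\chi(M)-\tfrac14\int_{M^4}|W_g|^2\,dv_g ,
\]
together with its refinements by the self-dual and anti-self-dual parts of $W_g$ (which bring in the signature $\tau(M)$) and the relation $\int|W_g|^2=2\int|\mathring{\mathrm{Ric}}_g|^2$, to extract a contradiction when $\chi(M)=1$.

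\emph{The non-positive cases.} The cases $c\le 0$ look harder still. When $c=0$ the identity already forces $\chi(M)=0$, and one must show that a scalar-flat Yamabe metric with $\int|W_g|^2=2\int|\mathring{\mathrm{Ric}}_g|^2$ is necessarily flat; I would approach this through a structure/classification result for scalar-flat Yamabe metrics on closed $4$-manifolds, combined with the identity and the signature formula. When $c<0$ Aubin's inequality gives no topological control, and one expects $g$ to be hyperbolic; a plausible route is to first force $W_g\equiv 0$ and then invoke a rigidity theorem for negatively curved Einstein $4$-manifolds (as in the Besson--Courtois--Gallot circle of ideas or the arguments of Theorem~\ref{LCF}), or alternatively to run a monotonicity argument along the conformal Ricci flow of Section~\ref{2}. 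I expect the subcase $c>0,\ \chi(M)=1$ and the two cases $c\le 0$ to be the crux; granting those, the conjecture follows directly from Gauss--Bonnet--Chern, Aubin's inequality, and Obata's theorem.
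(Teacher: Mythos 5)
This statement is posed in the paper as an open conjecture (it is the content of Conjecture~\ref{4D CSC}, motivated by the remark immediately preceding it); the paper offers no proof, and your proposal is not one either. The part you actually establish is sound: under the hypothesis the Gauss--Bonnet--Chern formula (\ref{4d GBC}) collapses to \(32\pi^{2}\chi(M)=\tfrac16 c^{2}\mathrm{Vol}_g(M)\) for the constant \(c=\mathrm{Sc}_g\), and in the subcase \(c>0\), \(\chi(M)=2\) the equality \(Y(M^4,[g])=Y(S^4)\) together with the rigidity part of the Aubin--Schoen inequality and Obata's theorem yields the round sphere. But that subcase essentially reproduces the paper's motivating remark (where the hypothesis is derived on \(S^4\) from \(\mathrm{Sc}_g=12\) and \(\mathrm{Vol}_g=\mathrm{Vol}_{g_{st}}\)); it is not the substance of the conjecture.

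The genuine gap is that every remaining case is left as a declared intention rather than an argument. For \(c>0\), \(\chi(M)=1\), Aubin's inequality cannot by itself produce a contradiction: closed oriented \(4\)-manifolds with \(\chi=1\) admitting positive scalar curvature exist (e.g.\ \((S^1\times S^3)\#\mathbb{CP}^2\)), so you would need a new quantitative obstruction to a Yamabe metric with \(Y^2=192\pi^2\) satisfying the integral identity, and the conformally invariant quantities you list (which are just rearrangements of (\ref{4d GBC}) and the signature formula) give no such obstruction on their own. For \(c=0\) you must show that a scalar-flat Yamabe metric with \(\int|W_g|^2=2\int|\mathring{\mathrm{Ric}}_g|^2\) is flat; no classification of scalar-flat closed \(4\)-manifolds is available (the paper itself notes this is open), and Gauss--Bonnet--Chern gives only \(\chi=0\), which does not force \(\mathring{\mathrm{Ric}}_g\equiv0\). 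For \(c<0\) — the case the paper singles out as the most interesting, since it would give the volume rigidity on hyperbolic \(4\)-manifolds — the hypothesis is only an integral identity, so you cannot conclude \(W_g\equiv0\) or \(\mathring{\mathrm{Ric}}_g\equiv0\) pointwise, and the rigidity results you invoke (Besson--Courtois--Gallot, or the argument of Theorem~\ref{LCF}) require an Einstein or locally conformally flat metric as input, which nothing in your setup provides; the conformal Ricci flow route suffers from the lack of long-time existence already acknowledged in Section~\ref{2}. Since you yourself flag these cases as ``the crux'' and only sketch hopes for them, the proposal proves a special subcase and leaves the conjecture open.
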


The most interesting case of Conjecture \ref{4D CSC} arises when the manifold admits a hyperbolic metric. In this setting, Conjecture \ref{4D CSC} implies that in dimension 
4, if a Yamabe metric has the same volume and scalar curvature as the hyperbolic metric, then it must in fact be isometric to the hyperbolic metric.

Motivated by Conjecture \ref{4D CSC}, we propose the following volume rigidity conjecture in higher dimensions:
\begin{conjecture}[Volume Rigidity Conjecture] \label{HYC}
 Assume a Riemannian metric $g$ on a closed hyperbolic $n$-manifold $(M^n,g_H)$ ($n\geq 5$) satisfies $\mathrm{Sc}_g=-n(n-1)$ and $\mathrm{Vol}_g(M^n)=\mathrm{Vol}_{g_H}(M^n)$, then the metric $g$ is isometric to the hyperbolic metric $g_H$. 
\end{conjecture}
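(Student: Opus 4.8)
\emph{Strategy.} The plan is to pass to the unique Yamabe metric of the conformal class $[g]$ and then treat the resulting rigidity, either conditionally via Schoen's conjecture or unconditionally via the conformal Ricci flow. First I would note that, since $M^n$ carries a hyperbolic metric, the argument in the proof of Proposition~\ref{attained} gives $Y(M^n,[g'])<0$ for every conformal class; hence, by Proposition~\ref{Yamabe}, the constant–scalar–curvature metric of negative scalar curvature in a conformal class is unique up to scaling and is automatically a Yamabe metric. Applied to $g$, whose scalar curvature is the constant $-n(n-1)$, together with the volume hypothesis this yields
\[
Y(M^n,[g])=-n(n-1)\,\mathrm{Vol}_g(M^n)^{2/n}=-n(n-1)\,\mathrm{Vol}_{g_H}(M^n)^{2/n}=Y(M^n,[g_H]).
\]
So $[g]$ and $[g_H]$ have the same negative Yamabe constant, and it remains to upgrade this to $[g]=[g_H]$, after which Proposition~\ref{Yamabe} immediately gives that $g$ is isometric to $g_H$.

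\emph{Conditional route.} If Schoen's conjecture holds in dimension $n$, then $Y(M^n)=Y(M^n,[g_H])$ and $Y(M^n,[g'])\le Y(M^n,[g_H])$ for every conformal class $[g']$; combined with the displayed identity, $[g]$ attains $Y(M^n)$, so Proposition~\ref{attained} forces $g_H\in[g]$, hence $[g]=[g_H]$, and the conclusion follows as above. This is, in effect, the remark following Proposition~\ref{attained}, and it shows that Conjecture~\ref{HYC} is a consequence of Schoen's conjecture; the point of stating it separately is the hope of proving it without the full conjecture.

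\emph{Unconditional route.} To avoid Schoen's conjecture I would use the conformal Ricci flow. Rescale $g$ so that $\mathrm{Sc}\equiv -1$ and evolve it by the conformal Ricci flow of Fischer–Moncrief \cite{zbMATH02063627}, which exists for a short time, preserves $\mathrm{Sc}_{g(t)}\equiv -1$, has non-increasing volume, and whose equilibria are exactly the Einstein metrics with Einstein constant $-1/n$. Granting long-time existence and convergence to an equilibrium $g_\infty$, rescale $g_\infty$ so that $\mathrm{Ric}=-(n-1)$; then Bishop–Gromov forces $\mathrm{Ent}(M^n,g_\infty)\le n-1$, and the Besson–Courtois–Gallot entropy–volume rigidity \cite{zbMATH00847576} gives $\mathrm{Vol}_{g_\infty}(M^n)\ge \mathrm{Vol}_{g_H}(M^n)$, while volume monotonicity along the flow gives $\mathrm{Vol}_{g_\infty}(M^n)\le \mathrm{Vol}_g(M^n)=\mathrm{Vol}_{g_H}(M^n)$; hence equality holds and $g_\infty$ is isometric to $g_H$. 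Since the volume is monotone along the flow and agrees at $t=0$ and $t=\infty$, it is constant; by the observation recorded above, that a non-Einstein initial metric whose flow converges to an Einstein metric has strictly decreasing volume, the initial metric must already be Einstein with Einstein constant $-1/n$, whence (un-rescaling and applying Besson–Courtois–Gallot once more) $g$ is isometric to $g_H$.

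\emph{Main obstacle.} The hard part is entirely the analytic input of the unconditional route: long-time existence and convergence of the conformal Ricci flow for $n\ge 5$, where, as the remarks above emphasize, the curvature tensor may blow up in finite time or as $t\to\infty$. Even granting long-time existence one must still rule out collapsing and convergence to a non-Einstein limit, control the pressure function $p(t)$, and make rigorous the step ``constant volume $\Rightarrow$ the flow starts at an equilibrium''. A more elementary substitute — combining the constraint $\mathrm{Sc}_g\equiv -n(n-1)$ with an integrated Bochner/Weitzenböck identity for $\mathring{\mathrm{Ric}}_g$, as in the dimension $4$ and $6$ arguments of Theorem~\ref{LCF} and Corollary~\ref{LCF on spheres} — does not appear to be available, since for $n\ge 5$ there is no Gauss–Bonnet–Chern–type identity forcing the traceless Ricci term to carry a definite sign; this is precisely why the statement is offered here as a conjecture rather than a theorem.
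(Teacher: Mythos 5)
There is no proof to compare against: the statement you were asked to prove is stated in the paper only as a conjecture (the Volume Rigidity Conjecture), and the paper offers no argument for it beyond exactly the two heuristics you reproduce --- the remark following Proposition~\ref{attained} (rigidity follows if Schoen's conjecture holds, via $Y(M^n,[g])=Y(M^n,[g_H])$, attainment, and Proposition~\ref{Yamabe}) and the remark on the conformal Ricci flow of Fischer--Moncrief. Your write-up is an accurate and internally consistent reduction of the conjecture to these inputs, and the conditional route is correctly executed: constancy of $\mathrm{Sc}_g$ plus $Y(M^n,[g])<0$ makes $g$ the Yamabe metric of $[g]$, the volume hypothesis then forces $Y(M^n,[g])=Y(M^n,[g_H])$, Schoen's conjecture would make $[g]$ an attaining class, Proposition~\ref{attained} would give $[g]=[g_H]$, and Proposition~\ref{Yamabe} would finish. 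But this is not a proof of the statement, because both branches rest on results that are open: Schoen's conjecture for $n\ge 5$ (the very problem the paper is making partial progress on in dimension $4$), and long-time existence, non-collapsing, and convergence to an Einstein equilibrium for the conformal Ricci flow, which the paper explicitly flags as unresolved (curvature may blow up in finite or infinite time). You say this yourself in your ``main obstacle'' paragraph, so the gap is acknowledged rather than hidden, but it remains a genuine gap: what you have produced is a conditional reduction matching the paper's own discussion, not a proof.

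One smaller point in the unconditional route deserves care even granting convergence: the step ``volume constant along the flow $\Rightarrow$ the initial metric is already Einstein'' uses the assertion that a non-Einstein initial metric whose flow converges to an Einstein metric has strictly decreasing volume. The paper records volume monotonicity and the characterization of equilibria, but the strict-decrease statement (and hence this contrapositive) would itself need a proof, e.g.\ via the first variation of volume under the pressure term $p(t)$; as written it is an additional unproved ingredient, independent of the long-time existence issue. Your closing observation --- that no Gauss--Bonnet--Chern-type identity is available for $n\ge 5$ to force a sign on the traceless Ricci contribution, which is why the dimension-$4$ and $6$ arguments of Theorem~\ref{LCF} and Corollary~\ref{LCF on spheres} do not generalize --- is correct and is precisely the reason the paper leaves the statement as a conjecture.
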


 The analogy of Conjecture~\ref{HYC} does not generally hold for $S^n$ when $n \geq 5$. This is due to the existence of the Hopf fibration on $S^{2n+1}$, which allows the construction of a family of metrics with constant scalar curvature $(2n+1)(2n+2)$, while the volumes of these metrics tend to zero.

\begin{remark}
By scaling the metric, the volumes of metrics on a closed hyperbolic $n$-manifold with scalar curvature bounded below by \(-n(n-1)\) are unbounded. Furthermore, by demonstrating that the product of the area norm of \(0 \neq \alpha \in H_2(M^3, \mathbb{Z})\) and \(\sup_{x\in M}(-\mathrm{Sc}_g(x))\) is bounded below by \( 2\pi \) times the Thurston norm of \(\alpha\), Reznikov  \cite{zbMATH01054076} shows that the supremum of the volumes of metrics on a closed hyperbolic 3-manifold $M^3$ with constant scalar curvature \(-6\) is infinite. 
\end{remark}

\begin{remark}
 It is not clear to the author whether the supremum of the volumes of metrics on a closed hyperbolic \(n\)-manifold \(M^n\) (\(n \geq 4\)) with constant scalar curvature \(-n(n-1)\) is infinite. 
\end{remark}

\begin{question}
Let \( N \) be a smooth, closed \( n \)-manifold with a negative Yamabe constant. Assume there exists a non-zero degree map \( F: N \to (M^n, g_H) \) such that the induced map \( F_*: \pi_1(N) \to \pi_1(M) \) is a homomorphism. Does it follow that for any metric \( g \) on \( N \) with \( \mathrm{Sc}_g \geq -n(n-1) \), we have \( \mathrm{Vol}_g(N) \geq \mathrm{Vol}_{g_H}(M) \)?

\end{question}

\begin{remark}
Since positive scalar curvature is preserved under connected sums by the results of Schoen-Yau and Gromov-Lawson, one cannot compare the volumes by requiring only that the scalar curvature is bounded below by a positive constant on a sphere in general.
\end{remark}

Moreover,  we  establish additional rigidity theorems for negative scalar curvature without requiring the existence of a hyperbolic metric.

\begin{proposition}\label{conformal}
 Let $(M^n, g)$ $(n\geq 3)$ is a closed Riemannian $n$-manifold such that its scalar curvature satisfies $\mathrm{Sc}_g \leq  0$ and $\mathrm{Sc}_g \not\equiv 0$. If $\tilde{g}$ is conformal to $g$ such that $\mathrm{Sc}_{\tilde{g}} \geq  \mathrm{Sc}_g$ and $\tilde{g}\leq g$, then $\tilde{g} \equiv g$.
\end{proposition}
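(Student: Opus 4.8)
The plan is to write $\tilde g = e^{2f}g$ for some $f \in C^{\infty}(M)$ and convert the two hypotheses into a single differential inequality for $f$, exactly in the spirit of the proof of Proposition~\ref{ncsc}. The hypothesis $\tilde g \le g$ is equivalent to $e^{2f} \le 1$, i.e. $f \le 0$ on $M$. Using the conformal transformation law
\[
\mathrm{Sc}_{\tilde g} = e^{-2f}\bigl(\mathrm{Sc}_g + 2(n-1)\Delta_g f - (n-2)(n-1)|\nabla f|_g^2\bigr),
\]
the hypothesis $\mathrm{Sc}_{\tilde g} \ge \mathrm{Sc}_g$, after multiplying through by $e^{2f} > 0$, becomes
\[
2(n-1)\,\Delta_g f - (n-2)(n-1)\,|\nabla f|_g^2 \;\ge\; (e^{2f}-1)\,\mathrm{Sc}_g .
\]

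Next I would note that the right-hand side is nonnegative: since $f \le 0$ we have $e^{2f}-1 \le 0$, and $\mathrm{Sc}_g \le 0$ by hypothesis, so $(e^{2f}-1)\mathrm{Sc}_g \ge 0$. Dividing by $2(n-1) > 0$ and using $n \ge 3$ then yields
\[
\Delta_g f \;\ge\; \frac{n-2}{2}\,|\nabla f|_g^2 \;\ge\; 0 .
\]
Because $M$ is closed, $\int_M \Delta_g f \, dv_g = 0$, so a nonnegative integrand must vanish identically: $\Delta_g f \equiv 0$. Feeding this back into the last displayed inequality forces $|\nabla f|_g^2 \equiv 0$, hence $f$ is constant, say $f \equiv c$ with $c \le 0$.

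It remains to identify $c = 0$ using $\mathrm{Sc}_g \not\equiv 0$. With $f \equiv c$ the transformation formula gives $\mathrm{Sc}_{\tilde g} = e^{-2c}\mathrm{Sc}_g$, so $\mathrm{Sc}_{\tilde g} \ge \mathrm{Sc}_g$ reads $(e^{-2c}-1)\mathrm{Sc}_g \ge 0$ pointwise. But $c \le 0$ gives $e^{-2c}-1 \ge 0$ while $\mathrm{Sc}_g \le 0$, so in fact $(e^{-2c}-1)\mathrm{Sc}_g \le 0$; hence $(e^{-2c}-1)\mathrm{Sc}_g \equiv 0$. Picking a point $x$ with $\mathrm{Sc}_g(x) < 0$ — which exists since $\mathrm{Sc}_g \le 0$ and $\mathrm{Sc}_g \not\equiv 0$ — forces $e^{-2c} = 1$, i.e. $c = 0$, so $\tilde g \equiv g$.

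I expect essentially no serious obstacle here: the whole argument collapses to one integration of a nonnegative quantity on a closed manifold once the hypotheses are translated into the inequality for $f$. The only points demanding care are the sign bookkeeping — the paper's convention that $\Delta_g$ has positive spectrum, together with the two sign reversals coming from $f \le 0$ and $\mathrm{Sc}_g \le 0$ — and the final step, where the assumption $\mathrm{Sc}_g \not\equiv 0$ is genuinely needed: if $\mathrm{Sc}_g \equiv 0$, then any constant rescaling $\tilde g = \lambda g$ with $0 < \lambda \le 1$ satisfies all the hypotheses but is not equal to $g$.
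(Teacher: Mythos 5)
Your proof is correct. It differs from the paper's argument in two minor but genuine ways: the paper writes $\tilde g = u^{4/(n-2)}g$ and uses the Yamabe-type equation, so the sign analysis gives $\Delta_g u \le 0$ and constancy of $u$ follows from the maximum principle on the closed manifold (mirroring its Proposition~\ref{Yamabe}), whereas you write $\tilde g = e^{2f}g$ and instead integrate the resulting inequality $\Delta_g f \ge \tfrac{n-2}{2}|\nabla f|_g^2 \ge 0$ over $M$, so constancy drops out of $\int_M \Delta_g f\, dv_g = 0$ together with the gradient term — effectively adapting the integration technique of Proposition~\ref{ncsc} rather than the maximum-principle technique the paper actually uses here. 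Your route avoids any appeal to the strong maximum principle at the cost of carrying the $|\nabla f|^2$ term, while the paper's power substitution makes that term disappear and reduces everything to the sign of $\Delta_g u$; the endgame (constant conformal factor, then $\mathrm{Sc}_g \not\equiv 0$ forces the constant to be $1$) is the same in both, and your closing counterexample with $\mathrm{Sc}_g \equiv 0$ correctly shows that hypothesis is needed. The only implicit assumption in your constancy step is connectedness of $M$ (to pass from $\nabla f \equiv 0$ to a single constant), but the paper's own proof makes the same tacit assumption, so this is not a gap relative to the paper.
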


\begin{proof}
Let $\tilde{g}=u^{\frac{4}{n-2}}g$ for some $0<u\in C^{\infty}(M)$. The condition of $\tilde{g}\leq g$ implies $u\leq 1$.  Furthermore, though the condition of $\mathrm{Sc}_{\tilde{g}} \geq  \mathrm{Sc}_g$,  we have 
\begin{align*}
-\frac{4(n-1)}{(n-2)}\Delta_{g}u=\mathrm{Sc}_{\tilde{g}} u^{\frac{n+2}{n-2}}- \mathrm{Sc}_g u  
                                  & \geq  \mathrm{Sc}_g u(u^{\frac{4}{n-2}}-1).
\end{align*}

Thus, 
\begin{align*}
\Delta_{g}u \leq \frac{-(n-2)\mathrm{Sc}_g}{4(n-1)}u(u^{\frac{4}{n-2}}-1)\leq 0.
\end{align*}
The compactness of $M$ implies $u \equiv c>0$. Then, we have $\mathrm{Sc}_{\tilde{g}}=\mathrm{Sc}_gc^{-\frac{4}{n-2}}\geq \mathrm{Sc}_g$. The condition of $n\geq 3$, $\mathrm{Sc}_g \leq  0$, and $\mathrm{Sc}_g \not\equiv 0$ imply that $c\geq 1$. Thus, $c=1$. This means that $\tilde{g} \equiv g$.
\end{proof}

\begin{remark}
The above Proposition \ref{conformal} is a modified version of Ho's observation \cite{zbMATH07926133}. In fact, using the same method, one can prove the boundary version of Proposition \ref{conformal} as follows: Let $(M^n, g)$ $(n\geq 3)$ is a compact Riemannian $n$-manifold with boundary $\partial M$ such that its scalar curvature satisfies $\mathrm{Sc}_g \leq  0$ and  $\mathrm{Sc}_g \not\equiv 0$ in $M$ and its mean curvature satisfies $H_g\equiv 0$ on $\partial M$. If $\tilde{g}$ is conformal to $g$ such that $\tilde{g}\leq g$ and $\mathrm{Sc}_{\tilde{g}} \geq  Sc_g$ in $M$ and $H_{\tilde{g}}\equiv 0$ on $\partial M$, then $\tilde{g}\equiv g$.

\end{remark}

For an Einstein manifold with positive scalar curvature, the author \cite{zbMATH07544449} use the harmonic mapping to prove  the following result without relying on Yamabe-Trudinger-Aubin-Schoen theorem or index theory:  
\begin{theorem}

Let $(N,g_N)$ be an orientable closed (Riemannian) Einstein $n$-manifold with scalar curvature $R^*>0$ and $(M,g_M)$ be an orientable closed Riemannian  $n$-manifold with scalar curvature $R$.  Suppose  $R\geq R^*$,  there exists a $(1, \wedge^1)$-contracting map $f: M \to N$ of non-zero degree, i.e, $|f_*v|_{g_N}\geq |v|_{g_M}$ for $v\in TM$,  and the map $f$ is harmonic with condition $C\leq 0$, then $f$ is a locally isometric map. If the fundamental group of $N$ is trivial, then $f$ is an isometric map.
\end{theorem}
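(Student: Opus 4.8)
The plan is a Bochner-type argument based on the Eells--Sampson identity for the harmonic map $f$. Since $|f_*v|_{g_N}\ge|v|_{g_M}$ and $\dim M=\dim N=n$, the differential $df_p\colon T_pM\to T_{f(p)}N$ is a linear isomorphism at every $p$, so $f$ is a local diffeomorphism; write $1\le\lambda_n\le\dots\le\lambda_1$ for the singular values of $df_p$ (with $g_M$ on the source and $g_N$ on the target), so that $e(f)=\tfrac12|df|^2=\tfrac12\sum_i\lambda_i^2$ and the Jacobian is $J_f=\prod_i\lambda_i\ge1$. The target is to prove $\nabla\,df\equiv0$ and $\lambda_i\equiv1$; then $f^{*}g_N=g_M$, so $f$ is a local isometry, and the last clause follows from covering-space theory.

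First I would integrate the Eells--Sampson formula
\[
\Delta_g\,e(f)=-|\nabla\,df|^2-\sum_i\big\langle df(\mathrm{Ric}^{M}e_i),df(e_i)\big\rangle+\sum_{i,j}\big\langle R^{N}(df(e_i),df(e_j))df(e_j),df(e_i)\big\rangle
\]
over the closed manifold $M$; using $\int_M\Delta_g(\cdot)\,dv_g=0$ this becomes
\[
\int_M|\nabla\,df|^2\,dv_g=\int_M\Big(\sum_{i,j}\langle R^{N}(df(e_i),df(e_j))df(e_j),df(e_i)\rangle-\sum_i\langle df(\mathrm{Ric}^{M}e_i),df(e_i)\rangle\Big)\,dv_g .
\]
At each point I would choose the $g_M$--orthonormal frame $\{e_i\}$ diagonalizing $f^{*}g_N$, so $df(e_i)=\lambda_i\tilde e_i$ with $\{\tilde e_i\}$ a $g_N$--orthonormal frame; the $N$--curvature integrand is then $\sum_{i\ne j}\lambda_i^2\lambda_j^2K^{N}(\tilde e_i,\tilde e_j)$, and the Einstein equation $\mathrm{Ric}^{N}=\tfrac{R^{*}}{n}g_N$ gives $\sum_jK^{N}(\tilde e_i,\tilde e_j)=\tfrac{R^{*}}{n}$ for each $i$. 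By the very definition of the condition $C\le0$ on $f$ (the curvature quantity introduced in \cite{zbMATH07544449}), combined with $\lambda_i\ge1$ (the $(1,\wedge^1)$--contracting hypothesis) and $\mathrm{Sc}_{g_M}=R\ge R^{*}>0$, the integrand on the right is pointwise $\le\tfrac{R^{*}-R}{n}\sum_i\lambda_i^2\le0$. Hence $\int_M|\nabla\,df|^2\,dv_g\le0$, so $\nabla\,df\equiv0$, and tracing back the equality case forces $R\equiv R^{*}$ and $\lambda_i\equiv1$; therefore $f^{*}g_N=g_M$ and $f$ is a local isometry. For the final assertion, a local isometry out of a closed (hence complete) Riemannian manifold is automatically a Riemannian covering map, and $\deg f\ne0$ forces it to be surjective; thus $f$ is a $k$--sheeted covering with $k=[\pi_1(N):f_{*}\pi_1(M)]$. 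If $\pi_1(N)$ is trivial then $k=1$, so $f$ is a diffeomorphism, and a bijective local isometry is an isometry.

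The hard part is the pointwise inequality $\sum_{i,j}\langle R^{N}(df(e_i),df(e_j))df(e_j),df(e_i)\rangle-\sum_i\langle df(\mathrm{Ric}^{M}e_i),df(e_i)\rangle\le\tfrac{R^{*}-R}{n}\sum_i\lambda_i^2$. The comparison $R\ge R^{*}$ is only a \emph{scalar} curvature bound, strictly weaker than a Ricci bound, so the contribution $\sum_i\lambda_i^2\,\mathrm{Ric}^{M}(e_i,e_i)$ cannot in general be estimated from below by a multiple of $\sum_i\lambda_i^2$ once the eigenframe of $f^{*}g_N$ is not adapted to $\mathrm{Ric}^{M}$ and the $\lambda_i$ are unequal. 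It is precisely here that the $(1,\wedge^1)$--contracting property and the condition $C\le0$ enter: one rearranges the weighted curvature contributions (following the computations of \cite{zbMATH07544449}) so that their difference telescopes into $\tfrac{R^{*}-R}{n}\sum_i\lambda_i^2$ plus a manifestly nonnegative expression in the $\lambda_i-1$, and then runs the equality analysis that pins down $\lambda_i\equiv1$. Everything else — the Eells--Sampson identity, integration over the closed manifold, and the covering-space endgame — is routine.
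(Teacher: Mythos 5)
The skeleton of your argument (local diffeomorphism from $|f_*v|\ge|v|$, a Bochner identity forcing $\lambda_i\equiv 1$, then the covering-space endgame using $\pi_1(N)=1$) is reasonable, but the central analytic step is not actually proved, and the identity you chose is the wrong one for these hypotheses. The Eells--Sampson formula for $e(f)=\tfrac12|df|^2$ involves the \emph{Ricci} curvature of the domain $M$ and the \emph{sectional} curvatures of the target $N$; the theorem only gives you a \emph{scalar} curvature bound on $M$ and an Einstein (Ricci) condition on $N$. You acknowledge precisely this mismatch, and then assert that the $(1,\wedge^1)$-condition together with ``$C\le 0$'' lets one ``rearrange'' the curvature terms so that they telescope into $\tfrac{R^*-R}{n}\sum_i\lambda_i^2$ plus a nonnegative remainder --- but no such rearrangement is derived, and none is available in general: with unequal $\lambda_i$ the weighted sum $\sum_{i\ne j}\lambda_i^2\lambda_j^2K^N(\tilde e_i,\tilde e_j)$ is not controlled by $\mathrm{Ric}^N$, and $\sum_i\lambda_i^2\,\mathrm{Ric}^M(e_i,e_i)$ is not controlled by $\mathrm{Sc}_{g_M}$. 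Moreover, ``condition $C\le 0$'' is not a free-floating hypothesis you can insert into the Eells--Sampson identity: it is the specific extra term appearing in a different Weitzenb\"ock-type formula, so invoking it there is not justified. As written, the key pointwise inequality is an appeal to the reference rather than a proof, which is a genuine gap.

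The paper's route (visible in its proof of the negative-curvature analogue, Theorem \ref{Einstein}, which it says follows the same idea as the statement at hand) avoids this mismatch entirely: one applies a Bochner-type formula not to $e(f)$ but to the squared Jacobian ratio $V=\bigl(\tfrac{f^*d\nu_N}{d\nu_M}\bigr)^2$, namely
\[
\tfrac12\Delta V \;=\; 2\sum_j(\mathbb{A}_j)^2 \;+\; V\Bigl(\mathrm{Sc}_{g_M}-\sum_{b,c,j}R^*_{b,c}A^b_jA^c_j\Bigr)\;-\;C ,
\]
in which the domain enters only through $\mathrm{Sc}_{g_M}$ and the target only through its Ricci tensor. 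The Einstein condition then gives $\sum_{b,c,j}R^*_{b,c}A^b_jA^c_j=\tfrac{R^*}{n}\sum_{a,i}(A^a_i)^2$, the $(1,\wedge^1)$-condition gives $\sum_{a,i}(A^a_i)^2=\sum_i\lambda_i^2\ge n$, and evaluating at an extremum of $V$ (maximum principle, using $V>0$, $C\le 0$, $\mathrm{Sc}_{g_M}\ge R^*$) forces $\lambda_i\equiv 1$ and hence $f^*g_N=g_M$. If you want to salvage your write-up, you should either reproduce that formula for $V$ and carry out the extremum argument, or genuinely establish the pointwise inequality you claim for the Eells--Sampson integrand --- at present neither is done. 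Your final covering-space step is fine once local isometry is in hand.
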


In fact, one can prove the following theorem with the same idea of as the proof of Theorem A in \cite{zbMATH07544449}.

\begin{theorem}\label{Einstein}
Let $(N,g_N)$ be an orientable closed (Riemannian) Einstein $n$-manifold with scalar curvature $R^*<0$ and $(M,g_M)$ be an orientable closed Riemannian  $n$-manifold with   $\mathrm{Sc}_g\geq R^*$. Suppose  there exists a  smooth non-zero degree $1$-expansive map $f: (M,g_M) \to (N, g_N)$, i.e., $|f_*v|_{g_N}\geq |v|_{g_M}$ for $v\in TM$, and the map $f$ is harmonic with condition $C\leq 0$, then $f$ is a locally isometric map. 

\end{theorem}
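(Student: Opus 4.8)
The plan is to run the Bochner--Eells--Sampson scheme behind Theorem~A of \cite{zbMATH07544449}, now with the negative--Einstein target $N$, and then to convert the Einstein equation of $g_N$ --- via the sign of $R^{*}$ --- into a scalar--curvature pinching that collapses $f$ onto a local isometry. First I would unwind the hypotheses. Since $f$ is $1$-expansive, $df_p\colon T_pM\to T_{f(p)}N$ is injective for every $p$, hence an isomorphism as $\dim M=\dim N$; thus $f$ is a local diffeomorphism and, $M$ being closed, a finite covering map, so $f^{*}g_N$ is a metric locally isometric to $g_N$ --- in particular Einstein with $\mathrm{Ric}_{f^{*}g_N}=\tfrac{R^{*}}{n}\,f^{*}g_N$ --- and $\deg f\neq0$ is automatic. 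Write $f^{*}g_N=g_M(A\,\cdot\,,\cdot)$ for the $g_M$--self--adjoint positive endomorphism $A$; then ``$1$-expansive'' says precisely $A\succeq\mathrm{Id}$, so $|df|^{2}=\mathrm{tr}_{g_M}A\ge n$, with equality at a point exactly when $A=\mathrm{Id}$ there. The goal is $A\equiv\mathrm{Id}$.

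Next, harmonicity together with condition $C\le0$ should force $f$ to be totally geodesic. Integrating the Eells--Sampson Bochner identity for the harmonic map $f$ over the closed manifold $M$ yields
\[
\int_M|\nabla df|^{2}\,dv_{g_M}=\int_M C\,dv_{g_M},
\]
where $C$ is, in the paper's sign conventions for $R^N$ and the Laplacian, exactly the curvature integrand defining condition~$C$ --- the target--curvature term $\sum_{i,j}\langle R^{N}(df\,e_i,df\,e_j)df\,e_j,df\,e_i\rangle$ minus the domain--Ricci term $\sum_i\langle df(\mathrm{Ric}^{M}e_i),df\,e_i\rangle$. Since $C\le0$ pointwise, this forces $\nabla df\equiv0$ (and $C\equiv0$); this step is transported from \cite{zbMATH07544449}, and I expect its only fiddly aspect to be bookkeeping with those sign conventions so that ``$C\le 0$'' really is the inequality that makes the right-hand side nonpositive.

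The heart of the proof is to upgrade $\nabla df=0$ to $A=\mathrm{Id}$. From $g_M\bigl((\nabla_X A)Y,Z\bigr)=\langle(\nabla df)(X,Y),df\,Z\rangle+\langle df\,Y,(\nabla df)(X,Z)\rangle=0$, the endomorphism $A$ is parallel for $\nabla^{g_M}$; hence $\nabla^{g_M}$ is torsion--free and compatible with $f^{*}g_N=g_M(A\,\cdot\,,\cdot)$, so $\nabla^{f^{*}g_N}=\nabla^{g_M}$ by uniqueness of the Levi--Civita connection, and the two metrics have the same $(0,2)$--Ricci tensor: $\mathrm{Ric}_{g_M}=\mathrm{Ric}_{f^{*}g_N}=\tfrac{R^{*}}{n}\,f^{*}g_N$. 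Tracing against $g_M$ gives $\mathrm{Sc}_{g_M}=\tfrac{R^{*}}{n}\,\mathrm{tr}_{g_M}A=\tfrac{R^{*}}{n}\,|df|^{2}$; since $R^{*}<0$ and $|df|^{2}\ge n$ this forces $\mathrm{Sc}_{g_M}\le R^{*}$, whereas by hypothesis $\mathrm{Sc}_{g_M}\ge R^{*}$. Hence equality holds, $\mathrm{tr}_{g_M}A\equiv n$, and a self--adjoint $A\succeq\mathrm{Id}$ of trace $n$ equals $\mathrm{Id}$; therefore $f^{*}g_N=g_M$ and $f$ is a local isometry.

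I expect the genuine obstacle to be this last step, and specifically the two observations that make it run: that $\nabla df=0$ forces $\nabla^{g_M}=\nabla^{f^{*}g_N}$, so that the Einstein equation of $g_N$ becomes a pointwise identity for $\mathrm{Sc}_{g_M}$; and that the sign $R^{*}<0$ is exactly what orients the inequality $\tfrac{R^{*}}{n}|df|^{2}\le R^{*}$ so that the \emph{lower} bound $\mathrm{Sc}_{g_M}\ge R^{*}$ bites. Without the scalar--curvature bound, or for $R^{*}>0$, the same chain only yields that $f$ is a homothety --- which is why the positive--curvature analogue in \cite{zbMATH07544449} instead takes $f$ metric--nonincreasing rather than $1$-expansive.
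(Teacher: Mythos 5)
Your argument is internally coherent, but it hinges on a guessed meaning of the hypothesis ``condition $C\le 0$'' that does not match what the paper (and the cited earlier work \cite{zbMATH07544449}) means by $C$. There, $C$ is the correction term in the identity for the squared Jacobian $V=\bigl(\tfrac{f^*d\nu_N}{d\nu_M}\bigr)^2$ of a harmonic map,
\[
\tfrac12\Delta V \;=\; 2\sum_j(\mathbb{A}_j)^2 \;+\; V\Bigl(\mathrm{Sc}_{g_M}-\sum_{b,c,j}R^*_{b,c}A^b_jA^c_j\Bigr)\;-\;C,
\]
in which the domain scalar curvature and the target Ricci term already appear explicitly and separately from $C$. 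So $C$ is \emph{not} the Eells--Sampson curvature integrand, and your key step --- integrating the Eells--Sampson Bochner formula to get $\int_M|\nabla df|^2\,dv_{g_M}=\int_M C\,dv_{g_M}$ and hence $\nabla df\equiv 0$ --- is not available under the stated hypothesis. With the paper's $C$, nonpositivity of $C$ gives no control over $\int_M|\nabla df|^2$, and nothing else in the hypotheses makes the Eells--Sampson integrand nonpositive: the assumption $\mathrm{Sc}_{g_M}\ge R^*$ is only a scalar (not Ricci) bound on the domain, and the negative Einstein condition on $N$ does not control the full curvature term $\sum_{i,j}\langle R^N(df\,e_i,df\,e_j)df\,e_j,df\,e_i\rangle$. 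Thus your proof establishes a variant theorem under a different hypothesis, not the theorem as stated.

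For contrast, the paper never passes through total geodesy: it applies the maximum principle to $V$ at a maximum point $x$, where $V(x)>0$ and $\Delta V(x)\le 0$; diagonalizing $f^*g_N$ against $g_M$ with singular values $\lambda_i\ge 1$ ($1$-expansiveness) and using the Einstein condition with $R^*<0$ gives $\sum_{b,c,j}R^*_{b,c}A^b_jA^c_j=\tfrac{R^*}{n}\sum_i\lambda_i^2\le R^*$, whence $\tfrac12\Delta V(x)\ge V(x)\bigl(\mathrm{Sc}_{g_M}-R^*\bigr)\ge 0$ after discarding the nonnegative terms $2\sum_j(\mathbb{A}_j)^2$ and $-C$; equality then forces $\lambda_i=1$, and since $V\ge 1$ everywhere while its maximum value is $1$, one gets $\lambda_i\equiv 1$ globally. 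Your second half --- $\nabla df=0$ implies $f^*g_N$ is $\nabla^{g_M}$-parallel, hence the Levi--Civita connections coincide, $\mathrm{Ric}_{g_M}=\tfrac{R^*}{n}f^*g_N$, and tracing against $g_M$ pinches $\mathrm{Sc}_{g_M}$ between the lower bound $R^*$ and $\tfrac{R^*}{n}\,\mathrm{tr}_{g_M}A\le R^*$ --- is correct and would give a clean alternative ending, but only after vanishing of $\nabla df$ has been secured, which the actual hypothesis ``$C\le 0$'' does not provide by your route.
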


\begin{proof}
We use the notations  in the proof of Theorem A in \cite{zbMATH07544449}. Since $M$ is compact and $f$ is non-zero degree,  $V:=(\frac{f^*d\nu_N}{d\nu_M})^2$ does attain its maximum at the point $x$ in $M$.   Let $e_i$ (resp. $e^*_a$) be a frame on $TM$ (resp. $TN$), then we have 
\begin{equation*}
f_*e_i=\sum\limits_{a}A^a_ie^*_a.
\end{equation*}
For the harmonic map $f$, we have 
\begin{equation*}
\frac{1}{2}\Delta V= 2\sum\limits_j(\mathbb{A}_j)^2+V(Sc_{g_M}-\sum\limits_{b,c,j}R^*_{b,c}A^b_jA^c_j)-C.
\end{equation*}

Then  $V(x)>0$ and $\Delta V(x)\leq 0$. Notice that $V(x)$ is independent of the choice of the frame and coframe. At the point $x$, we choose a local $g$-orthonormal frame $e_1, \dots, e_n$ on $T_xM$ and a local $g_{st}$-orthonormal frame $e^*_1,\dots, e^*_n$ on $T_{f(x)}S^n$, such that there exists $\lambda_1\geq \lambda_2\geq\cdots \geq \lambda_n>0$, with $f_*e_i=\lambda_ie^*_i$. This can be done by diagonalizing $f^*g_{st}$ with respect to the metric $g$. As $f$ is $1$-expansive, we have $\lambda_i\geq 1$  for all $1\leq i\leq n$. On account of the Einstein metric $g_N$ with $R^* <0$, we have 
\begin{equation*}
\sum\limits_{b,c,j}R^*_{b,c}A^b_jA^c_j=\frac{R^*}{n}\sum\limits_{a,i}(A^a_i)^2\leq R^*.
\end{equation*}
The assumptions  $\mathrm{Sc}_{g_M}\geq R^*$ and $C\leq 0$ imply
\begin{equation*}
\frac{1}{2}\Delta V(x)\geq V(x)(\mathrm{Sc}_{g_M}-R^*)\geq 0.
\end{equation*}
Then by combining $V(x)>0$ and $\Delta V(x)\leq 0$, we get $\mathrm{Sc}_{g_M}=R^*$ and $\lambda_i=1$ for all $1\leq i\leq n$. That means $f$ is an locally isometric map.
\end{proof}

More generally, we conjecture that Theorem \ref{Einstein} remains valid without assuming that the map is harmonic or that the condition $C\leq 0$ holds. 

\begin{conjecture}\label{Einstein Rigidity}
Let $(M^n, g_E)$ be a closed Einstein $n$-manifold ($n \geq 3$) with  negative scalar curvature $R^* < 0$.  If $g$ be another metric on $M$ satisfying $\mathrm{Sc}_g \geq R^*$ and $g \leq g_E$, then $g$ is isometric to the Einstein metric $g_E$.
\end{conjecture}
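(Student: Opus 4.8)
The plan is to read the metric comparison $g\le g_E$ as a Lipschitz condition on the identity map and to bring the statement into the framework of Theorem~\ref{Einstein}. Write $g(v,w)=g_E(Sv,w)$ for the $g_E$-symmetric positive endomorphism $S$ of $TM$; then $g\le g_E$ is equivalent to $S\le\mathrm{Id}$, and the assertion to prove is $S\equiv\mathrm{Id}$. Observe first that if $g$ lies in the conformal class $[g_E]$ the conjecture is already contained in Proposition~\ref{conformal}: with the roles of $g$ and $\tilde g$ there played by $g_E$ and $g$ respectively, the hypotheses $\mathrm{Sc}_{g_E}\equiv R^*<0$, $\mathrm{Sc}_g\ge R^*$ and $g\le g_E$ force $g\equiv g_E$ (recall that an Einstein metric in dimension $\ge 3$ has constant scalar curvature by Schur). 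Thus the real content is the non-conformal case.

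For the general case I would try to produce a harmonic map $f\colon (M,g)\to(M,g_E)$ homotopic to $\mathrm{id}_M$ which is still $1$-expansive, i.e.\ $|f_*v|_{g_E}\ge |v|_g$ for all $v\in TM$; the identity map itself is $1$-expansive (this is precisely $g\le g_E$) and of degree $1$, but it is not harmonic in general. Once such an $f$ is available one has $\deg f=1\neq 0$, and --- provided the target curvature term $C$ occurring in the Bochner identity of Theorem~\ref{Einstein} is $\le 0$ for $g_E$ --- Theorem~\ref{Einstein} forces $f$ to be a local isometry, hence a Riemannian covering, hence (being of degree $1$ between closed manifolds) an isometry $(M,g)\to(M,g_E)$, which is the desired conclusion. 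The natural device for constructing $f$ from $\mathrm{id}_M$ is the harmonic map heat flow $\partial_t f_t=\tau(f_t)$ with $f_0=\mathrm{id}_M$.

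The main obstacle is to control this flow, and there are three distinct points. First, long-time existence and convergence: since $g_E$ is Einstein of negative scalar curvature but need not be non-positively curved, the Eells--Sampson theorem does not apply directly, and one must rule out finite-time bubbling and convergence of the energy density to a nonconstant harmonic sphere --- here the lower bound $\mathrm{Sc}_g\ge R^*$ and the expansiveness should feed the a priori estimates. Second, and this is the technical heart, one must show that $1$-expansiveness is preserved along the flow, i.e.\ that the smallest singular value of $df_t$ stays $\ge 1$; this should follow from a maximum-principle analysis of the evolution equation of that smallest singular value, in which the scalar curvature bound on the domain and the Einstein condition on the target enter with the correct sign, in the spirit of the computation in the proof of Theorem~\ref{Einstein}. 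Third, the pointwise condition $C\le 0$ is a genuine restriction on $g_E$; it should hold when $g_E$ has non-positive sectional curvature (in particular when $g_E$ is real, complex, quaternionic or octonionic hyperbolic), and in that case the first obstacle disappears as well by Eells--Sampson, so proving the conjecture for these model Einstein metrics is a natural first target. The fully general negative Einstein case, where $g_E$ may carry some positive sectional curvature, seems to require new ideas beyond the harmonic-map method.

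Finally, I would keep in reserve the following purely metric reformulation. Since $g\le g_E$ already gives $\mathrm{Vol}_g(M)\le\mathrm{Vol}_{g_E}(M)$, with equality precisely when $g\equiv g_E$ pointwise, the conjecture is equivalent to the reverse volume inequality $\mathrm{Vol}_g(M)\ge\mathrm{Vol}_{g_E}(M)$ deduced from $\mathrm{Sc}_g\ge R^*$ and $g\le g_E$ alone. This matches the philosophy of Proposition~\ref{ncsc} and Theorem~\ref{LCF}, but here one lacks a conformal reduction to a constant-scalar-curvature representative, so I do not see a route to the volume estimate that avoids essentially the same analysis as above.
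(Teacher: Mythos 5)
The statement you are trying to prove is stated in the paper as Conjecture~\ref{Einstein Rigidity}; the paper offers no proof of it. The only results in that direction are Theorem~\ref{Einstein}, which proves the rigidity \emph{under the additional hypotheses} that the comparison map is harmonic and that the condition $C\le 0$ holds, and Proposition~\ref{conformal}, which settles the case $g\in[g_E]$. Your observation that the conformal case follows from Proposition~\ref{conformal}, and your reduction of the general statement to producing a harmonic, $1$-expansive map of nonzero degree to which Theorem~\ref{Einstein} applies, therefore reproduce exactly the paper's partial results rather than close the conjecture.

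The gaps you flag are genuine, and they are precisely what separates the conjecture from Theorem~\ref{Einstein}, so your proposal does not constitute a proof. Concretely: (1) the harmonic map heat flow with target a negatively Einstein metric that is not non-positively curved has no general long-time existence or convergence theory, and the hypotheses $\mathrm{Sc}_g\ge R^*$ and $g\le g_E$ give no energy or gradient control that would rule out bubbling; (2) preservation of $1$-expansiveness along the flow is not something the scalar curvature bound can deliver. The computation behind Theorem~\ref{Einstein} is a pointwise maximum-principle argument applied to the single scalar quantity $V=\bigl(\tfrac{f^*d\nu_N}{d\nu_M}\bigr)^2$ at its maximum, for a map that is \emph{already assumed} harmonic and $1$-expansive; it does not produce an evolution inequality for the smallest singular value of $df_t$, and controlling an individual singular value pointwise typically requires sectional-curvature hypotheses on both domain and target, far stronger than a scalar curvature lower bound on the domain; (3) the condition $C\le 0$ is an extra hypothesis in Theorem~\ref{Einstein} that you would still have to verify for the limit map, so even a successful flow argument would only prove the conjecture for restricted targets (e.g.\ non-positively curved Einstein metrics), not in the stated generality. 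Your volume reformulation ($g\le g_E$ gives $\mathrm{Vol}_g(M)\le\mathrm{Vol}_{g_E}(M)$ with equality iff $g\equiv g_E$, so the conjecture is equivalent to the reverse volume inequality) is correct and is a reasonable way to think about the problem, but as you note it does not by itself supply the missing estimate. In short, your write-up is an honest strategy sketch that recovers the paper's known partial results; the conjecture itself remains open, and nothing in your proposal bridges the identified gaps.
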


Inspired by Theorem \ref{Einstein}, we propose the following Llarull-type rigidity conjecture in the hyperbolic setting.
\begin{conjecture}[Hyperbolic Rigidity Conjecture]
Assume $(N^n,g)$ is   an orientable closed  $n$-manifold with $\mathrm{Sc}_g\geq -n(n-1)$ and $(M^n,g_H)$ be an orientable closed hyperbolic $n$-manifold. Suppose  there exists a  smooth non-zero degree $1$-expansive map $f: (N^n,g) \to (M^n, g_H)$, i.e., $|f_*v|_{g_H}\geq |v|_{g_N}$ for $v\in TN$,  then $f$ is a  isometric map. 
\end{conjecture}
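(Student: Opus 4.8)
The plan is to follow the strategy already used for Theorem~\ref{Einstein}. First observe that a closed hyperbolic manifold $(M^n,g_H)$ is Einstein with $\mathrm{Ric}_{g_H}=-(n-1)g_H$ and $\mathrm{Sc}_{g_H}=-n(n-1)$, and has constant sectional curvature $-1$; in particular $g_H$ is non-positively curved. Thus $(M^n,g_H)$ plays the role of the Einstein target of Theorem~\ref{Einstein} with $R^{\ast}=-n(n-1)$, the domain $(N^n,g)$ satisfies $\mathrm{Sc}_g\ge R^{\ast}$, and the hypothesis $|f_\ast v|_{g_H}\ge|v|_g$ is exactly the $1$-expansiveness required there. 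So it suffices to remove the two extra hypotheses of Theorem~\ref{Einstein}: that $f$ be harmonic, and that the target-curvature term $C$ be $\le 0$.

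Since $g_H$ has non-positive sectional curvature, the Eells--Sampson theorem applies and $f$ is homotopic to a harmonic map $f_0:(N^n,g)\to(M^n,g_H)$, which, degree being a homotopy invariant, is again of non-zero degree. The term $C$ in the Bochner--Eells--Sampson identity for the Jacobian is built from the full curvature tensor of the target contracted against $df$, and for a target of constant negative curvature this contraction has a definite sign, so one should be able to check $C\le 0$ automatically on a hyperbolic target. Granting both points, Theorem~\ref{Einstein} gives that $f_0$ is a local isometry; a local isometry from a complete manifold is a Riemannian covering, so $\mathrm{Vol}(N,g)=|\deg f_0|\,\mathrm{Vol}(M,g_H)=|\deg f|\,\mathrm{Vol}(M,g_H)$. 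On the other hand, $f^{\ast}g_H\ge g$ forces $df$ to be everywhere invertible (a degenerate $f^{\ast}g_H$ would force $g$ to degenerate), so $f$ is itself a covering map with $dv_g\le f^{\ast}dv_{g_H}$ pointwise, giving $\mathrm{Vol}(N,g)\le|\deg f|\,\mathrm{Vol}(M,g_H)$ with equality only if $f^{\ast}g_H=g$. Comparing the two, equality holds, so $f$ is a local isometry --- an isometry when $\deg f=\pm1$.

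The genuine obstacle is hidden in the previous paragraph: the harmonic representative $f_0$ need not be $1$-expansive, since the Eells--Sampson homotopy does not respect pointwise metric inequalities. The route I would take is to run the harmonic map heat flow starting from $f$ and to show that $1$-expansiveness is preserved along the flow --- equivalently, that the smallest eigenvalue of $f_t^{\ast}g_H$ with respect to $g$ stays $\ge 1$ --- via a tensor maximum principle applied to the evolution equation of $f_t^{\ast}g_H$, in which the negative curvature of the target together with the bound $\mathrm{Sc}_g\ge-n(n-1)$ must supply the favorable sign; the flow then converges (the target being non-positively curved) to a harmonic limit that is still $1$-expansive. I expect this preservation property to be the crux of the entire argument, and it is far from clear that the scalar curvature lower bound alone is strong enough to force it.

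If the heat-flow route cannot be pushed through, two alternatives are worth recording. One is a spinorial argument in the spirit of Llarull and Goette--Semmelmann: twist the Dirac operator on $N$ (or a finite cover, to arrange a spin structure) by a bundle with connection pulled back along $f$ from the hyperbolic structure of $M$, and run the Lichnerowicz--Weitzenb\"ock estimate; the subtlety is that the representation governing $\mathbb{H}^n$ is non-compact, so the twisting data must be chosen with care and the index-theoretic input re-derived. The other is the Besson--Courtois--Gallot barycenter/natural-map method applied to $f$; but this controls the volume entropy rather than the scalar curvature, and since scalar curvature does not bound the volume entropy in general (the Kazaras--Song--Xu examples), this would at best yield a weaker conclusion.
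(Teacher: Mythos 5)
You should first note that the statement you were asked to prove is not a theorem of the paper at all: it is stated there as an open \emph{conjecture} (a Llarull-type ``Hyperbolic Rigidity Conjecture''), offered precisely because the author can only prove the rigidity statement (Theorem~\ref{Einstein}) under the two additional hypotheses that the map is harmonic and that the term $C$ in the Bochner identity for $V=(f^*d\nu_N/d\nu_M)^2$ is non-positive. So there is no proof in the paper to compare against, and your proposal does not supply one either: by your own admission, the crux --- that the harmonic replacement $f_0$ (or the harmonic map heat flow started at $f$) remains $1$-expansive --- is left unproven, and without it Theorem~\ref{Einstein} cannot be invoked, so the volume-comparison endgame never gets off the ground.

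Two further points in your sketch are weaker than you present them. First, the claim that $C\le 0$ ``should'' hold automatically for a constant-curvature target is unsubstantiated: in the Bochner formula quoted in the paper the target Ricci contribution already appears separately as $\sum_{b,c,j}R^*_{b,c}A^b_jA^c_j$, so $C$ is a different term, and if its sign were automatic for hyperbolic targets the author would not have needed to state Conjecture~\ref{Einstein Rigidity} removing exactly this hypothesis. Second, the tensor maximum principle you propose for the evolution of $f_t^*g_H$ along the heat flow would require pointwise control of the full curvature (at least Ricci) of the domain metric $g$; the hypothesis $\mathrm{Sc}_g\ge -n(n-1)$ is an averaged, trace-level bound and gives no favorable sign for the eigenvalue evolution of $f_t^*g_H$ --- indeed the Kazaras--Song--Xu examples you cite show that scalar curvature lower bounds do not even control volume entropy, which is why the Besson--Courtois--Gallot machinery also fails here. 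In short, your proposal correctly identifies the obstructions but resolves none of them; the statement remains, as in the paper, a conjecture.
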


For a weighted Riemannian manifold \( (M^n, g, e^{-f} \, d\mathrm{Vol}_g) \), where \( f \in C^{\infty}(M^n) \), the weighted scalar curvature \( \mathrm{Sc}_{\alpha, \beta} \) was introduced by the author in \cite{zbMATH07342230} and is defined by
\[
\mathrm{Sc}_{\alpha, \beta} := \mathrm{Sc}_g + \alpha \, \Delta_g f - \beta \lvert \nabla_g f \rvert^2_g,
\]
where \( \alpha, \beta \in \mathbb{R} \) are constants. Several classical results concerning positive scalar curvature have been extended to this weighted setting in that paper. For example, the vanishing of harmonic spinors on closed spin Riemannian manifolds with positive scalar curvature, as well as the Schoen–Yau reduction argument using stable minimal hypersurfaces, remain valid in the weighted context.

\begin{remark}
For \( \alpha = 2 \) and \( \beta = \tfrac{m+1}{m} \) with \( m > 0 \), Case defines the weighted Yamabe constant \( \Lambda(g, e^{-f} \, d\mathrm{Vol}_g, m) \) and solves the corresponding weighted Yamabe problem in \cite{zbMATH06537658}. However, the weighted Yamabe invariant for general parameters \( \alpha \neq 0 \) and \( \beta \neq 0 \) has not yet been defined or systematically studied. It is also natural to ask whether weighted analogues of the results in this section can be formulated and established. In particular, if the manifold admits a hyperbolic metric, one may ask whether the inequality
\[
\Lambda(g, e^{-f} \, d\mathrm{Vol}_g, m) \leq \Lambda(g_H, e^{-f} \, d\mathrm{Vol}_{g_H}, m)
\]
holds for a fixed function \( f \in C^{\infty}(M^n) \) and \( m > 0 \).

\end{remark}

\section{LCF  $4$-manifolds with NSC}\label{5}

Let \( (M^n, g) \) be a closed, oriented, smooth, locally conformally flat Riemannian manifold with \( n \geq 4 \) and positive scalar curvature. Schoen and Yau~\cite[Theorem~4.5]{zbMATH04075988} prove that the  developing (conformal) map of \( (\widetilde{M^n}, \tilde{g}) \) is injective, and that \( \pi_1(M^n) \) admits a faithful holonomy representation into a discrete subgroup of \( \operatorname{Conf}(S^n) \). Let \( \Gamma \) denote the image of \( \pi_1(M^n) \) under the representation induced by the developing map. The image of the developing map is an open subset \( \Omega \subset S^n \) on which \( \Gamma \) acts properly discontinuously, and \( M^n \) is diffeomorphic to the quotient \( \Omega / \Gamma \).

Moreover, \( \Omega \) coincides with the domain of discontinuity of \( \Gamma \), and can be written as \( \Omega = S^n \setminus \Lambda \), where \( \Lambda \subset S^n \) is the limit set of \( \Gamma \). In particular, \( \Lambda \) is the minimal closed \( \Gamma \)-invariant subset of \( S^n \), and \( \Omega \) serves both as the image of the developing map and as the maximal open set on which the action of \( \Gamma \) is properly discontinuous.

The above results of Schoen and Yau~\cite{zbMATH04075988}   hold in dimensions not less than four under the assumption \( \mathrm{Sc}_g \geq c > 0 \).
  In that paper, they also remarked that their result \cite[Proposition 4.4$^{\prime}$]{zbMATH04075988}  would remain valid under the assumption of nonnegative scalar curvature, provided the positive energy theorem can be extended to the case of complete manifolds—namely, when \( M \) has one asymptotically flat end and other ends that are merely complete.  
Recently, Lesourd, Unger, and Yau~\cite[Theorem~1.2]{MR4836036} used minimal hypersurfaces to prove that there does not exist a complete Riemannian metric with positive scalar curvature on \( T^3 \# X \), where \( X \) is an arbitrary (possibly noncompact) manifold.  
Furthermore, they show that the nonexistence of a complete smooth metric with positive scalar curvature on \( T^n \# X \) $(n\geq 3)$ implies the following Liouville theorem~\cite[Theorem~1.7]{MR4836036}, see also~\cite[Corollary~4]{zbMATH07817078}:

\begin{theorem}[Liouville theorem]\label{Liouville theorem}
Let \( (M^n, g) \), \( n \ge 3 \), be a complete, locally conformally flat manifold with nonnegative scalar curvature.  
If \( \Phi : M^n \to S^n \) is a conformal map, then \( \Phi \) is injective and the boundary \( \partial \Phi(M) \) has zero Newtonian capacity (i.e.,  $2$-capacity).
\end{theorem}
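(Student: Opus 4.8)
The plan is to argue by contradiction, using the theorem of Lesourd--Unger--Yau that $T^n\#X$ admits no complete metric of positive scalar curvature. Suppose the conclusion of Theorem~\ref{Liouville theorem} fails, i.e.\ $\Phi$ is not injective, or $\partial\Phi(M)$ has positive $2$-capacity. First I would set up the standard reductions. A conformal map is an immersion, hence (between manifolds of equal dimension) a local diffeomorphism and an open map, so $\Omega:=\Phi(M)$ is open in $S^n$, $\Lambda:=S^n\setminus\Omega$ is closed, and $\partial\Phi(M)=\partial\Omega=\partial\Lambda\subseteq\Lambda$. Composing $\Phi$ with a Möbius transformation we may send a chosen point of $\Omega$ to the north pole and pass to the stereographic chart $S^n\setminus\{N\}\cong\mathbb{R}^n$, in which $\Lambda$ is a closed subset of $\mathbb{R}^n$ (compact, in the capacity case). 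In this chart the conformal class of $g$ is represented locally by a metric $v^{4/(n-2)}\delta$ with $v>0$, $\delta$ the Euclidean metric; since the conformal Laplacian of $(\mathbb{R}^n,\delta)$ is a positive multiple of $-\Delta$, the hypothesis $\mathrm{Sc}_g\ge 0$ says exactly that $v$ is superharmonic wherever $\Phi$ is a diffeomorphism onto its image, while completeness of $g$ forces $\int v^{2/(n-2)}\,ds=\infty$ along every path of $M$ that diverges toward $\Lambda$. The strategy is then to convert either failure into a complete metric of nonnegative scalar curvature on a manifold carrying an essential $n$-torus summand; perturbing to strictly positive scalar curvature (using $\mathrm{Sc}_g\ge 0$, $\mathrm{Sc}_g\not\equiv 0$, in the spirit of Kazdan--Warner, together with the Gromov--Lawson surgery bookkeeping) then contradicts the Lesourd--Unger--Yau theorem.

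For injectivity, suppose $\Phi(p)=\Phi(q)$ with $p\neq q$. Then $M$ contains disjoint open neighbourhoods $U_p$ and $U_q$ that $\Phi$ maps diffeomorphically onto one and the same small round ball $B\subset S^n$. Working in the Euclidean model, I would excise small balls from $U_p$ and $U_q$ and glue the resulting ends to each other by the identification furnished by $\Phi$, smoothing the join with a Gromov--Lawson neck carried out \emph{inside the conformal class}, i.e.\ by modifying the conformal factor $v$ so that $-\Delta v\ge 0$ is preserved across the neck. This is a conformal self-surgery on $(M,g)$ keeping the metric complete, locally conformally flat, and of nonnegative scalar curvature, while enlarging the fundamental group. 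Iterating along several independent self-overlaps of $\Phi$, and arranging the gluings so that the extra topology is that of an embedded essential $n$-torus rather than merely $S^1\times S^{n-1}$ handles, produces a manifold of the form $T^n\#X$ carrying a complete metric with $\mathrm{Sc}\ge 0$ --- a contradiction. Hence $\Phi$ is injective.

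For the capacity statement we may now identify $(M,g)$ with a domain $\Omega=S^n\setminus\Lambda$ carrying a complete conformally flat metric of nonnegative scalar curvature; in the Euclidean chart $g=v^{4/(n-2)}\delta$ with $v>0$ superharmonic and $v\to\infty$ near $\Lambda$. Suppose $\partial\Lambda=\partial\Phi(M)$ has positive $2$-capacity. Then $\partial\Lambda$ contains a compact set $K$ with $0<\mathrm{cap}_2(K)<\infty$, and its equilibrium potential $w(x)=\int_K|x-y|^{2-n}\,d\mu(y)$ is a bounded, nonconstant, positive function that is harmonic on $\mathbb{R}^n\setminus K$ and tends to $0$ at infinity --- a conformal factor whose associated metric $w^{4/(n-2)}\delta$ is scalar-flat off $K$. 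The idea is to combine $v$ and $w$ into a complete conformal factor on a neighbourhood of $K$ inside $\Omega$ that inherits the completeness of $g$ near $\Lambda$ and keeps $\mathrm{Sc}\ge 0$, then glue the resulting end to a once-punctured flat $T^n$ by a conformal connect-sum, again smoothing the neck within the conformal class, to obtain a complete metric with $\mathrm{Sc}\ge 0$ on some $T^n\#X$. After the perturbation to $\mathrm{Sc}>0$ this contradicts Lesourd--Unger--Yau, so $\mathrm{cap}_2(\partial\Phi(M))=0$.

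The main obstacle is the gluing, which underlies both parts. The conformal connect-sums must be performed so that (i) the metric stays complete at every pre-existing and newly created end; (ii) the scalar curvature stays nonnegative across the necks --- and because the neck surgery has to be carried out inside the conformal class, i.e.\ by altering the conformal factor while controlling the sign of its image under the conformal Laplacian, this is delicate exactly where that factor blows up, near $\Lambda$ in the first part and near $K$ in the second (finding the right combination of $v$ and $w$ there is the crux); and (iii) the resulting smooth manifold genuinely contains a $T^n$ connected summand rather than only $S^1\times S^{n-1}$ handles, since the latter carry positive scalar curvature and yield no contradiction. The positive-capacity hypothesis is precisely what supplies the potential-theoretic building block $w$; when $\mathrm{cap}_2(\partial\Phi(M))=0$ the set $\partial\Lambda$ is polar, hence conformally removable, so no such obstruction can be manufactured --- which is both the heuristic content of the theorem and the reason the capacity bound is sharp.
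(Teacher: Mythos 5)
First, note that the paper does not prove Theorem~\ref{Liouville theorem} at all: it is quoted from Lesourd--Unger--Yau (and ultimately Schoen--Yau), so there is no in-paper argument to compare yours against. Judged on its own terms, your proposal is a heuristic outline with gaps at exactly the points where the real work lies. In the injectivity step, self-gluing two sheets of $M$ over a common ball via $\Phi$ changes $\pi_1$ by an HNN-type extension, i.e.\ it creates $S^1\times S^{n-1}$-type handles; such handles are compatible with positive scalar curvature, so no contradiction with the $T^n\#X$ theorem can arise from them, and you give no mechanism whatsoever for forcing an essential $T^n$ summand (you flag this yourself, but flagging it does not fix it: a connected sum with $T^n$ simply cannot be manufactured by identifying overlapping charts of an immersion). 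Moreover, the tools you invoke are not available in the form you need them: Gromov--Lawson necks require \emph{strictly} positive scalar curvature and do not preserve local conformal flatness, and there is no established ``surgery inside the conformal class'' keeping $-\Delta v\ge 0$ across a neck where $v$ blows up; likewise, the Kazdan--Warner-type perturbation of $\mathrm{Sc}\ge 0$, $\not\equiv 0$ to $\mathrm{Sc}>0$ is a closed-manifold statement and cannot be applied on a complete noncompact manifold while preserving completeness without substantial extra argument.

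The capacity half has the same structural problem: writing down the equilibrium potential $w$ of a compact $K\subset\partial\Phi(M)$ with positive capacity is the easy part; the crux is to produce from $v$ and $w$ a \emph{complete} conformal factor with $\mathrm{Sc}\ge 0$ on a manifold genuinely of the form $T^n\# X$, and your text only says this should be done ``by a conformal connect-sum, again smoothing the neck within the conformal class,'' which is precisely the unproved step. The actual derivation in Schoen--Yau and Lesourd--Unger--Yau is analytic rather than surgical: one uses the conformal factor and the Green's function/equilibrium potential to manufacture an asymptotically flat end (or a suitable conformal metric on a flat torus minus a set of positive capacity) and then applies the positive mass theorem with arbitrary ends, respectively the $T^n\#X$ nonexistence theorem, with all completeness and sign conditions verified by explicit potential-theoretic estimates; injectivity of $\Phi$ is likewise obtained from these analytic estimates, not from topological gluing. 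As written, your argument would not survive the two obstacles you yourself identify, so it does not constitute a proof; if you want to pursue this statement, follow the Schoen--Yau scheme (superharmonicity of the conformal factor, capacity estimates, positive mass with arbitrary ends) rather than a surgery construction.
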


In another paper, Lesourd, Unger, and Yau~\cite[Theorem~1.2]{MR4773185} use $\nu$-buddle to prove the above Schoen–Yau conjecture and obtain a new proof of Liouville theorem, which is completely in the spirit of the approach outlined in~\cite{zbMATH04075988}.

 A Kleinian group $\Gamma$ is called \textit{elementary} if its limit set $\Lambda$ is empty or consists of at most two points; otherwise, it is called \textit{non-elementary}. If $\Lambda$ is empty, then $\Gamma$ is finite. If $\Lambda$ consists of a single point, then $\Gamma$ contains an abelian subgroup of finite index of rank $k$ with $k \leq n$. If $\Lambda$ consists of two points, then $\Gamma$ contains an infinite cyclic subgroup of finite index. If the limit set $\Lambda(\Gamma)$ does not consist of exactly two points, $\Lambda$ can be characterized as the minimal closed \( \Gamma \)-invariant subset of \( S^n \).

 Let \( B^{n+1} := \{ x \in \mathbb{R}^{n+1} \mid |x| < 1 \} \) be the Poincaré ball model endowed with the hyperbolic metric  $
g_{H} = 4(1 - |x|^2)^{-2} \sum_{i=1}^{n+1} (dx^i)^2.
$ 
Every element of the conformal group \( \mathrm{Conf}(S^n) \) extends to a diffeomorphism of the closed ball \( \overline{B^{n+1}} := B^{n+1} \cup S^n \), and restricts to an isometry of \( B^{n+1} \) with respect to the hyperbolic metric. Conversely, every isometry in \( \mathrm{Isom}(B^{n+1}) \) extends continuously to the boundary \( S^n \), acting as a conformal transformation of the standard sphere \( (S^n, g_{st}) \). Therefore, there is a natural group isomorphism: $\mathrm{Isom}(B^{n+1}) \cong \mathrm{Conf}(S^n).$

For an infinite Kleinian group \( \Gamma \), the \emph{critical exponent} \( \delta(\Gamma) \) is defined by  
\[
\delta(\Gamma) := \inf\left\{ s > 0 \,\middle|\, \sum_{\gamma \in \Gamma} \exp\left( -s \, \mathrm{dist}(x, \gamma y) \right) < \infty \right\},
\]
where  $x, y \in B^{n+1}$ and \( \mathrm{dist}(\cdot, \cdot) \) denotes the hyperbolic distance function on the Poincaré ball model \( B^{n+1} \). The value of \( \delta(\Gamma) \) is independent of the particular choice of \( x \) and \( y \). If \( \Gamma \) is non-elementary, then \( 0 < \delta(\Gamma) \leq n \). If \( \Gamma'  \leq \Gamma \) is a subgroup, then   $\delta(\Gamma') \leq \delta(\Gamma).$

A Kleinian group \( \Gamma \) is said to be \emph{convex cocompact} if the quotient
$(\Omega(\Gamma) \cup B^{n+1}) / \Gamma$ is compact. Equivalently, \( \Gamma \) is convex cocompact if the hyperbolic convex hull \( \mathrm{CH}(\Gamma) \subset B^{n+1} \) of the limit set \( \Lambda(\Gamma) \) satisfies that \( \mathrm{CH}(\Gamma)/\Gamma \) is nonempty and compact. Here, \( \mathrm{CH}(\Gamma) \) denotes the minimal convex subset of \( B^{n+1} \) whose closure in the compactified ball \( \overline{B^{n+1}} := B^{n+1} \cup S^n \) contains \( \Lambda(\Gamma) \).

A Kleinian group \( \Gamma \) is said to be \emph{geometrically finite} if there exists a uniform bound on the orders of its finite subgroups and the \( \epsilon \)-neighborhood of \( \mathrm{CH}(\Gamma)/\Gamma \) in \( B^{n+1}/\Gamma \) has finite volume for some \( \epsilon > 0 \). Equivalently, \( \Gamma \) is geometrically finite if it admits a fundamental polyhedron in \( B^{n+1} \) with finitely many faces.

Convex cocompact Kleinian groups are geometrically finite. Conversely, geometrically finite Kleinian groups without parabolic elements are precisely the convex cocompact ones.

If \( \Gamma \) is a non-elementary geometrically finite Kleinian group, then the Patterson–Sullivan theorem~\cite[Theorem~1]{zbMATH03903608} states that
\begin{equation*}\label{Sullivan}
\delta(\Gamma) = \dim_{\mathcal{H}}(\Lambda(\Gamma)),
\end{equation*}
where \( \dim_{\mathcal{H}} \) denotes the Hausdorff dimension of the limit set \( \Lambda(\Gamma) \).

In the following, we  study the topological types of locally conformally flat \(4\)-manifolds with scalar-flat metrics. One reason this is interesting is that the Riemannian connection associated with a locally conformally flat scalar-flat metric achieves an absolute minimum of the Yang–Mills functional on a closed oriented \(4\)-manifold. Therefore, it is natural to study the topological types of closed \(4\)-manifolds \(M^4\) admitting locally conformally flat scalar-flat metrics.
Based on the topological classification of orientable, simply connected, closed \(4\)-manifolds, LeBrun and Maskit~\cite[Corollary~1.2]{zbMATH05319680} characterized the manifolds that admit scalar-flat anti-self-dual metrics as follows: A compact, simply connected topological \(4\)-manifold \(M\) admits a smooth structure supporting a scalar-flat anti-self-dual metric \(g\) if and only if \(M\) is homeomorphic to one of the following:
$K3$,  $\mathbb{C}P^2 \# k\, \overline{\mathbb{C}P}^2 \ (k \ge 10)$, or $ k\, \overline{\mathbb{C}P}^2 \ (k \ge 5).$

We now consider the case where the manifold \( M^4 \) is not necessarily simply connected.  By the Gauss–Bonnet–Chern formula, one has \(\chi(M^4) \le 0\), with equality if and only if \( M^4 \) admits a flat metric. In 1978, Brown, Bülow, Neubüser, Wondratschek, and Zassenhaus complete a computer-assisted classification of all isomorphism classes of \(4\)-dimensional crystallographic groups, thereby classifying closed flat \(4\)-manifolds \cite{MR484179}. They obtained \(74\) homeomorphism equivalence classes of closed flat \(4\)-manifolds, consisting of \(27\) orientable classes and \(47\) non-orientable classes. 
 
From now on, suppose that \(\chi(M^4) < 0\), and let \(g\) be a locally conformally flat scalar-flat metric on \(M^4\). In this case, the Ricci tensor of \(g\) must be nonzero, since otherwise \(\chi(M^4) = 0\). 
Thus, the metric \(g\) can be deformed to a metric \(g'\) with positive scalar curvature. 
However, the metric \(g'\) may not have vanishing Weyl tensor, 
because the existence of a locally conformally flat metric with positive scalar curvature implies that the second Betti number \(b_2(M)\) vanishes. 

Moreover, even when \(b_2(M) = 0\), the metric \(g\) cannot, in general, be deformed to a locally conformally flat metric with positive scalar curvature, as the following example shows.

\begin{example}\label{example scalar flat}
Let \((\Sigma_g, g_H)\) be a hyperbolic surface that admits an orientation-reversing isometry \(r: \Sigma_g \to \Sigma_g\). Consider \((X, g) = (S^2 \times \Sigma_g, g_{st} \oplus g_H)\), and let \(A: (S^2, g_{st}) \to (S^2, g_{st})\) be the antipodal map, 
i.e., the orientation-reversing fixed-point-free isometry. 
Then \(F := A \times r\) acts freely and isometrically on 
\((X, g) = (S^2 \times \Sigma_g, g_{st} \oplus g_H)\). 
Since \(\mathrm{deg}(F)=\mathrm{deg}(A)\mathrm{deg}(r) = (-1)\cdot(-1) = +1\), the map \(F\) is orientation-preserving. 

Let \(M := X / \langle F \rangle\). 
Because \(F\) is an isometry, the metric \(g\) descends to a smooth metric \(\bar{g}\) on \(M\); and since both the locally conformally flat and scalar-flat properties are local, \((M, \bar{g})\) is still locally conformally flat and scalar-flat. For a free finite group action one has $H^2(M; \mathbb{Q}) \cong H^2(X; \mathbb{Q})^{\langle F \rangle}$ (the $G$-invariants of the $H^2(X; \mathbb{Q})$).
Now, $H^2(X; \mathbb{Q}) \cong H^2(S^2; \mathbb{Q}) \oplus H^2(\Sigma_g; \mathbb{Q}),$
which is generated by the area classes of the two factors. 
Since both \(A\) and \(r\) reverse orientation on their respective factors, 
\(F_*\) acts by \(-1\) on each summand. 
Hence, the \(\langle F \rangle\)-invariant subspace of \(H^2(X; \mathbb{Q})\) is trivial, and therefore $b_2(M) = \dim H^2(M; \mathbb{Q}) = 0.$ Thus, \((M, \bar{g})\) is a closed, nonflat, locally conformally flat, scalar-flat \(4\)-manifold with \(b_2(M) = 0\).

The manifold \( M \) does not admit a locally conformally flat metric with positive scalar curvature.  
Otherwise, a finite cover of \( M^4 \) would be diffeomorphic to \( k\#(S^1 \times S^3) \) for some \( k \ge 2 \).  
Its universal cover \( \tilde{M} \) would then be homeomorphic to the complement of a Cantor set in \( S^4 \), 
so its second homotopy group would vanish.  
However, \( \tilde{M} \) is homeomorphic to \( S^2 \times \mathbb{H}^2 \), whose second homotopy group does not vanish — a contradiction.
\end{example}

For a closed, orientable, locally conformally flat \(2n\)-manifold $N^{2n}$ \((n \ge 2)\) with nonnegative scalar curvature, 
Noronha~\cite[Theorem~2]{MR1235219} analyzes the possible holonomy groups case by case and shows that either \(b_n(N^{2n}) = 0\), or \(N^{2n}\) is covered by \(\mathbb{E}^{2n}\) or by \(S^n \times \mathbb{H}^n\).  

Furthermore, by imposing the additional condition that, on a closed locally conformally flat scalar-flat \(4\)-manifold, the largest eigenvalue of the Ricci operator is not greater than the absolute value of its smallest eigenvalue, Noronha applies the Weitzenböck formula to \(2\)-forms to show that \(\nabla \mathrm{Rm} \equiv 0\) in that paper; that is, the manifold is locally symmetric, and is covered by either \(\mathbb{E}^4\) or \(S^2 \times \mathbb{H}^2\).

Noronha's results were motivated by the following question posed in her paper concerning compact \(4\)-manifolds:
\begin{quote}
If \( \pi_2(M^4) \neq 0 \) and \( M^4 \) admits a conformally flat metric with zero scalar curvature, is \( M^4 \) covered by \( S^2 \times \mathbb{H}^2 \)?
\end{quote}

We will give a positive answer to her question in the following theorem.

\begin{theorem} \label{the universal cover}
Let \( (M^4, g) \) be a closed, locally conformally flat, and scalar–flat \(4\)-manifold with \( \pi_2(M^4) \neq 0 \).  Then its Riemannian universal cover \( (\widetilde{M}, \tilde{g}) \) is isometric to $(\mathbb{H}^2  \times S^2 ,  g_{H} \oplus g_{st})$ up to homothety.
\end{theorem}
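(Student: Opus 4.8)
The plan is to realize $\pi_1(M^4)$ as a Kleinian group $\Gamma\subset\mathrm{Conf}(S^4)=\mathrm{Isom}(B^5)$ and show that its limit set $\Lambda(\Gamma)$ is a round $2$-sphere, since then the domain of discontinuity $\Omega=S^4\setminus\Lambda$ is the union of two round $3$-balls, and $\widetilde M=\Omega$ carries (up to homothety) the product metric on $\mathbb{H}^2\times S^2$, the quotient identification being forced by the conformal structure. First I would upgrade the locally conformally flat scalar-flat metric $g$ to a locally conformally flat metric with \emph{positive} scalar curvature whenever $\chi(M^4)<0$: since $g$ is scalar-flat with $\chi(M^4)<0$, the Gauss--Bonnet--Chern formula \eqref{4d GBC} forces $\mathring{\mathrm{Ric}}_g\not\equiv 0$, hence $\mathrm{Ric}_g\not\equiv 0$, so (by the standard argument on conformal deformation in the non-positive Yamabe case, or directly) $g$ can be conformally deformed to positive scalar curvature while staying LCF. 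If instead $\chi(M^4)=0$ the manifold is flat and $\pi_2$ vanishes, contradicting the hypothesis; so we are always in the negative-$\chi$ case. Now Schoen--Yau's structure theorem for closed LCF manifolds of positive scalar curvature (cited in Section~\ref{5}) applies: the developing map of $(\widetilde M,\tilde g)$ is injective, $\pi_1(M^4)$ has a faithful holonomy representation onto a discrete $\Gamma\subset\mathrm{Conf}(S^4)$, $\widetilde M\cong\Omega(\Gamma)$, and $M^4\cong\Omega(\Gamma)/\Gamma$.

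Next I would pin down the size of $\Lambda(\Gamma)$. Applying the Liouville theorem of Lesourd--Unger--Yau (Theorem~\ref{Liouville theorem}) to the developing map of the \emph{original} complete LCF scalar-flat metric on $\widetilde M$ shows $\partial\Phi(\widetilde M)=\Lambda(\Gamma)$ has zero Newtonian ($2$-)capacity; combined with the Chang--Qing--Yang theorem this forces $\Gamma$ to be geometrically finite (as Kleinian group). Since $M^4$ is aspherical-free only in the factor sense, $\Gamma$ is non-elementary (an elementary $\Gamma$ would make $M^4$ have a flat or $S^3\times\mathbb{R}$-type cover, contradicting $\pi_2\neq 0$ together with the topology). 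By Nayatani's theorem the scalar-flat Yamabe-type metric forces $\delta(\Gamma)=2$: the LCF scalar-flat metric is (a constant multiple of) Nayatani's canonical metric, which exists precisely when $\delta(\Gamma)$ equals the dimension of the totally geodesic sphere it is modeled on; the scalar-flat normalization pins this down to $\delta(\Gamma)=2$. By the Patterson--Sullivan theorem, $\dim_{\mathcal H}\Lambda(\Gamma)=\delta(\Gamma)=2$. On the other hand $\pi_2(M^4)\neq 0$ means $\widetilde H_2(\widetilde M;\mathbb{Z})\neq 0$ (by Hurewicz, since $\widetilde M$ is simply connected), i.e.\ $\Omega=S^4\setminus\Lambda$ has nontrivial $H_2$; Alexander duality in $S^4$ then gives $\check H^{1}(\Lambda;\mathbb{Z})\neq 0$, hence the topological (Čech) dimension satisfies $\dim\Lambda(\Gamma)\geq 2$. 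Since always $\dim\Lambda\le\dim_{\mathcal H}\Lambda=2$, we get $\dim\Lambda(\Gamma)=2$ exactly.

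Finally I would invoke Kapovich's rigidity theorem \cite{MR2491697}: a geometrically finite Kleinian group acting on $S^n$ whose limit set has topological dimension $n-2$ and Hausdorff dimension $n-2$, with the limit set separating, has limit set a round $(n-2)$-sphere (equivalently $\Gamma$ is, up to conjugation, contained in the stabilizer of a round $2$-sphere in $S^4$, i.e.\ conjugate into $\mathrm{Isom}(\mathbb{H}^2\times\mathbb{H}^2)$ acting appropriately). Once $\Lambda(\Gamma)=S^2\subset S^4$ is a round $2$-sphere, $\Omega$ is conformally $S^3\times(0,1)$-like — more precisely $\Omega$ with its conformal structure is that of $\mathbb{H}^2\times S^2$ — and $\widetilde M=\Omega$ with the scalar-flat LCF metric in this conformal class must, by Obata-type rigidity for the product (the scalar-flat LCF equation on $\mathbb{H}^2\times S^2$ has only the product metric up to scaling as a complete solution with the right symmetry), be homothetic to $(\mathbb{H}^2\times S^2, g_H\oplus g_{st})$. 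The identification $\widetilde M/\Gamma=M^4$ is automatic. The remark in the introduction's outline indicates the same proof works for $M^{2n}$ with $\widetilde H_n(\widetilde M;\mathbb{Z})\neq 0$ in place of $\pi_2\neq 0$.

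\medskip
\noindent\textbf{Main obstacle.} The delicate point is establishing $\delta(\Gamma)=2$ rather than merely $\delta(\Gamma)\le 2$ or $\ge 2$: one must show that the \emph{scalar-flat} LCF metric forces the critical exponent to equal exactly $2$, which is where Nayatani's explicit construction of the canonical metric on $\Omega(\Gamma)/\Gamma$ (whose scalar curvature sign is governed by the comparison of $\delta(\Gamma)$ with $n/2=2$) enters — the scalar-flat condition is precisely the borderline $\delta(\Gamma)=n/2$. Making the "$\ge 2$" direction via Alexander duality clean also requires care that $\Lambda$ is connected enough for $\check H^1(\Lambda)\neq 0$ to follow from $H_2(\Omega)\neq 0$; handling the torsion/connectivity bookkeeping there, and then matching Kapovich's exact hypotheses (geometric finiteness plus the two dimension coincidences plus the separation property), is the technical heart.
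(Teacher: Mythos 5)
Your overall architecture matches the paper's (developing map and Kleinian group, geometric finiteness via Chang--Qing--Yang, Patterson--Sullivan plus Nayatani, Alexander duality plus Hurewicz, Kapovich, then a conformal-to-isometric upgrade), but two of your steps are genuinely wrong as stated. First, the opening reduction is impossible: a scalar-flat metric has vanishing first eigenvalue of the conformal Laplacian, so \(Y(M,[g])=0\) and \(g\) cannot be conformally deformed to positive scalar curvature at all (conformal deformation is the only one that preserves local conformal flatness); moreover, under the hypothesis \(\pi_2(M^4)\neq 0\) the manifold admits \emph{no} locally conformally flat PSC metric whatsoever (this is exactly the point of Example~\ref{example scalar flat} and of the first line of the paper's proof), so the Schoen--Yau PSC structure theorem is not available. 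The paper avoids this by getting injectivity of the developing map directly from the Liouville theorem~\ref{Liouville theorem}, which is valid for complete LCF metrics with merely nonnegative scalar curvature; your later use of that theorem for the capacity statement should be promoted to carry the whole first step.

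Second, your dimension bookkeeping is off by one, and this breaks the conclusion. The Schoen--Yau/Nayatani threshold for the sign of the scalar curvature is \((n-2)/2\), not \(n/2\); for \(n=4\) the scalar-flat condition forces \(\delta(\Gamma)=\dim_{\mathcal H}\Lambda(\Gamma)=1\), and Alexander duality \(\widetilde H_2(S^4\setminus\Lambda)\cong\check H^{1}(\Lambda;\mathbb Z)\neq 0\) gives topological dimension \(\geq 1\) (not \(\geq 2\)). Kapovich then yields a round \emph{circle}, and \(S^4\setminus S^1\cong_{\mathrm{conf}}\mathbb H^2\times S^2\) by Lemma~\ref{Stereographic}. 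With your numbers (\(\delta=2\), round \(2\)-sphere) one would get \(\Omega=S^4\setminus S^2\cong_{\mathrm{conf}}\mathbb H^3\times S^1\), which is neither simply connected nor compatible with scalar-flatness, and it is certainly not \(\mathbb H^2\times S^2\); note also that a round \(S^2\subset S^4\) has codimension two, so its complement is connected, not two balls. (The paper's introduction contains the same \(\delta=2\) slip, but its actual proof uses \(\delta=1\).) Finally, your last step ("Obata-type rigidity with the right symmetry") is where real work remains: the paper's Lemma~\ref{iso} needs the Jimenez--Tojeiro theorem that conformal diffeomorphisms of \(\mathbb H^k\times S^\ell\) are isometries, to make the conformal factor \(\Gamma\)-invariant, and then an integration of \(\Delta_g\tilde\varphi=\tfrac{m-2}{2}|\nabla\tilde\varphi|^2_g\) over the \emph{compact} quotient to force the factor to be constant; without this equivariance-plus-compactness argument the homothety claim is unsupported.
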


\begin{proof}
The manifold \( M^4 \) admits neither a flat metric nor a locally conformally flat metric with positive scalar curvature, since \( \pi_2(M^4) \neq 0 \). Identifying \( \pi_1(M) \) with its holonomy image \( \Gamma \), We have the diffeomorphisms  
$M \cong \Omega / \Gamma$, $\widetilde{M} \cong \Omega = S^4 \setminus \Lambda(\Gamma).$

The group \( \Gamma \) is non-elementary.  Otherwise, \(\Lambda(\Gamma)\) would have at most two points, which would imply \(\pi_2(S^4 \setminus \Lambda(\Gamma)) = 0\).  
However, by assumption, \(\pi_2(\widetilde{M}) \cong \pi_2(\Omega) = \pi_2(S^4 \setminus \Lambda(\Gamma)) \neq 0\), leading to a contradiction.

The group \( \Gamma \) is geometrically finite. 
Since \( M \cong \Omega / \Gamma \) is compact and admits a locally conformally flat, scalar–flat metric, we have 
\[
\dim_{\mathcal{H}}(\Lambda(\Gamma)) \le 1 < 4.
\]
Chang, Qing, and Yang~\cite[Theorem 0.1]{MR2070141} show that if \( \Gamma \) is a nonelementary, finitely generated, conformally finite  subgroup of \( \mathrm{Conf}(S^n) \), then \( \Gamma \) is geometrically finite if and only if \( \dim_\mathcal{H}(\Lambda(\Gamma)) < n \). Here,  conformally finite means that 
$\Omega(\Gamma)/\Gamma$ is the disjoint union of a compact set and finitely many standard conformal cusp ends.  Since \( M \cong \Omega / \Gamma \) is the closed manifold, the group \( \Gamma \) is finitely generated and conformally finite. Combining this with \( \dim_{\mathcal{H}}(\Lambda(\Gamma)) \le 1 < 4 \), we conclude that \( \Gamma \) is geometrically finite.

Then, by the Patterson–Sullivan theorem, we have 
$\dim_{\mathcal{H}}(\Lambda(\Gamma)) = \delta(\Lambda),$ where \( \delta(\Lambda) \) is the critical exponent of \( \Gamma \).  
The existence of the metric \( g \) implies \( \delta(\Lambda) = 1 \) by Nayatani’s theorem~\cite[Corollary~3.4]{zbMATH01028179}.

The Čech–Alexander duality theorem states that for any nonempty compact subset 
\( K \subset S^n \),
\[
\widetilde{H}_k(S^n \setminus K; G) 
\cong 
\check{\widetilde{H}}^{\,n-k-1}(K; G),
\]
that is, the reduced singular homology of the complement is isomorphic to the reduced Čech cohomology of the set,
for every abelian coefficient group \( G \) and every \( k \ge 0 \).

Since \( \widetilde{M} \) is simply connected and \( \pi_2(M) \neq 0 \), 
by Hurewicz theorem, one has
\[
0\neq \pi_2(\widetilde{M}) \cong H_2(\widetilde{M}; \mathbb{Z}) \cong \widetilde{H}_2(S^4 \setminus \Lambda(\Gamma); \mathbb{Z})  \cong \check{\widetilde{H}}^{\,1}(\Lambda; \mathbb{Z})\cong \check{H}^{\,1}(\Lambda; \mathbb{Z}).
\]
Hence \( H_2(\widetilde{M}; \mathbb{Z}) \neq 0 \) implies $\check{H}^{\,1}(\Lambda; \mathbb{Z}) \neq 0.$

Recall the cohomological dimension \( \dim_G(X) \) of a topological space \( X \) with respect to an abelian group \( G \) as the largest integer \( n \) such that there exists a closed subset \( A \subset X \) with $\check{H}^{\,n}(X, A; G) \neq 0.$ That is,
\[
\dim_G X := \sup \{\, n : \exists\, A \subset X \text{ such that } \check{H}^{\,n}(X, A; G) \neq 0 \,\}.
\]

For any abelian group \( G \) and any compact space \( X \), one has
\[
\dim_G X \leq \dim_{\mathbb{Z}} X \leq \dim X,
\]
where \( \dim X \) denotes the topological (Lebesgue covering) dimension of \( X \).

In fact, by the Alexandroff theorem, if \( X \) is a compact metric space of finite topological dimension, then $\dim_{\mathbb{Z}} X = \dim X.$

Thus, \( \check{H}^{\,1}(\Lambda; \mathbb{Z}) \neq 0 \) implies that 
\( \dim_{\mathbb{Z}}(\Lambda(\Gamma)) \geq 1 \), 
and hence the topological dimension of the limit set satisfies 
\( \dim(\Lambda(\Gamma)) \geq 1 \). (It also follows that \( \Gamma \) is non-elementary.)
Therefore,
\[
1 
\le \dim(\Lambda(\Gamma)) 
\le \dim_{\mathcal{H}}(\Lambda(\Gamma)) 
\le 1
\;\implies\;
\dim(\Lambda(\Gamma)) 
= \dim_{\mathcal{H}}(\Lambda(\Gamma)) = 1.
\]

Finally, we need the following theorem of Kapovich~\cite[Theorem~1.3]{MR2491697}:  

Suppose that \( \Gamma \subset \mathrm{Isom}(\mathbb{H}^n) \) is a nonelementary, geometrically finite group such that the Hausdorff dimension of its limit set equals its topological dimension \(d\).  
Then the limit set of \( \Gamma \) is a round \(d\)-sphere.  

Thus, in our case, \( \Lambda(\Gamma) \) is a round circle.  Consequently, \( \widetilde{M},\tilde{g}$ is conformal to $  (\Omega = S^4 \setminus S^1, g_{st}|_{S^4 \setminus S^1}) \).  By the following Lemma~\ref{Stereographic}, it follows that \( \widetilde{M} \) is conformal to $(\mathbb{H}^2  \times S^2 ,  g_{H} \oplus g_{st}).$ Finally, Lemma~\ref{iso} implies that \( \widetilde{M} \) is isometric to  
\( (\mathbb{H}^{2} \times S^{2},\, g_{H} \oplus g_{st}) \) up to homothety.
\end{proof}

\begin{lemma}\label{Stereographic}
Let $m\ge 2$ and $1\le k\le m-2$. Then
$$(S^{m}\setminus S^{k}, g_{st}|_{S^{m}\setminus S^{k}})\ \cong_{\mathrm{conf}}\ (\mathbb H^{k+1}\times S^{\,m-k-1}, g_{H}\oplus g_{st}).$$
In particular, for $m=2n$ and $k=n-1$,
$S^{2n}\setminus S^{n-1}\ \cong_{\mathrm{conf}}\ \mathbb H^{n}\times S^{n}.$
\end{lemma}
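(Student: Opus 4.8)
The plan is to first use stereographic projection to replace the round sphere by Euclidean space, and then read off the product structure from a change to cylindrical coordinates transverse to the removed plane.

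Since every round $k$-sphere in $S^m$ is the image of the standard equatorial $S^{k}=S^{m}\cap(\mathbb R^{k+1}\times\{0\})$ under an element of $\mathrm{Conf}(S^m)\cong\mathrm{Isom}(B^{m+1})$, and since the assertion concerns only conformal classes, we may assume $S^k$ is this standard subsphere. Fix a point $p\in S^k$ and let $\sigma:S^m\setminus\{p\}\to\mathbb R^m$ be stereographic projection from $p$; it is a conformal diffeomorphism, so $(S^m\setminus\{p\},\,g_{st})\cong_{\mathrm{conf}}(\mathbb R^m,\,g_{\mathbb E})$, and $\sigma$ carries the round sphere $S^k\setminus\{p\}$ onto an affine $k$-plane. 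Composing with a Euclidean isometry, we may take this plane to be $P:=\mathbb R^k\times\{0\}\subset\mathbb R^k\times\mathbb R^{m-k}$. Hence
\[
(S^m\setminus S^k,\ g_{st}|_{S^m\setminus S^k})\ \cong_{\mathrm{conf}}\ (\mathbb R^m\setminus P,\ g_{\mathbb E}),
\]
the explicit stereographic conformal factor being irrelevant since conformal equivalence is transitive.

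Next, on $\Omega:=\mathbb R^m\setminus P=\{(u,v):u\in\mathbb R^k,\ v\in\mathbb R^{m-k}\setminus\{0\}\}$ introduce coordinates $(u,r,\omega)$ with $r=|v|>0$ and $\omega=v/|v|\in S^{m-k-1}$; this is exactly where the hypothesis $1\le k\le m-2$ enters, ensuring that $S^{m-k-1}$ is a sphere of dimension at least one. The Euclidean metric becomes $g_{\mathbb E}=|du|^2+dr^2+r^2 g_{st}$, where $g_{st}$ now denotes the round metric on $S^{m-k-1}$, and factoring out $r^2$ gives
\[
g_{\mathbb E}=r^2\!\left(\frac{|du|^2+dr^2}{r^2}+g_{st}\right).
\]
The metric $\frac{|du|^2+dr^2}{r^2}$ on the half-space $\{(u,r):u\in\mathbb R^k,\ r>0\}$ is precisely the upper half-space model of $\mathbb H^{k+1}$, which is isometric to the ball model $(B^{k+1},g_H)$ of the excerpt. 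Therefore $(\Omega,\,g_{\mathbb E})$ is conformal, with conformal factor $r^{-2}$, to $(\mathbb H^{k+1}\times S^{m-k-1},\,g_H\oplus g_{st})$, and chaining this with the identification of the previous paragraph proves the lemma; the stated special case is the instance $m=2n$, $k=n-1$.

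I do not expect a genuine obstacle. The only points deserving a word of care are the use of the transitivity of $\mathrm{Conf}(S^m)$ on round $k$-subspheres to put $S^k$ in standard position, and the role of the restriction $1\le k\le m-2$, which is exactly what keeps both factors $\mathbb H^{k+1}$ and $S^{m-k-1}$ nondegenerate: for $k=m-1$ the complement $S^m\setminus S^{m-1}$ is disconnected and the statement genuinely fails.
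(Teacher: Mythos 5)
Your proposal is correct and follows essentially the same route as the paper: stereographic projection from a point of $S^k$ sending the subsphere to an affine $k$-plane, then cylindrical coordinates transverse to the plane and rescaling by $r^{-2}$ to exhibit the product $\mathbb H^{k+1}\times S^{m-k-1}$. The only cosmetic differences are that you invoke transitivity of $\mathrm{Conf}(S^m)$ on round subspheres to normalize $S^k$ and suppress the explicit stereographic conformal factor, whereas the paper writes it out; both are harmless.
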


\begin{proof}

Fix $p\in S^{k}\subset S^{m}$ and let $\sigma_p:\ S^{m}\setminus\{p\}\longrightarrow \mathbb R^{m}$ be stereographic projection from $p$. If a round subsphere passes through $p$,
its image under $\sigma_p$ is an affine linear subspace. Hence
$\Pi:=\sigma_p(S^{k}\setminus\{p\})$ is an affine $k$--plane in $\mathbb R^{m}$ and
\[
\sigma_p:\ S^{m}\setminus S^{k}\xrightarrow{\ \cong\ }\mathbb R^{m}\setminus \Pi.
\]
Choose orthogonal coordinates $\mathbb R^{m}=\mathbb R^{k}\oplus\mathbb R^{m-k}$ so that
$\Pi=\mathbb R^{k}\times\{0\}$ and write $x=(u,v)$ with $u\in\mathbb R^{k}$, $v\in\mathbb R^{m-k}$.
Then $\mathbb R^{m}\setminus\Pi\ \cong\ \mathbb R^{k}\times\big(\mathbb R^{m-k}\setminus\{0\}\big),$ and since $m-k\ge 2$, the complement is connected.

For convenience in this proof, denote by \( g_H = g_{\mathbb{H}^{k+1}} \) the hyperbolic metric on \( \mathbb{H}^{k+1} \), and by \( g_{\mathrm{st}} = g_{S^{m-k-1}} \) the standard round metric on \( S^{m-k-1} \). Write $v=\rho\,\omega$ with $\rho:=|v|>0$ and $\omega\in S^{m-k-1}$. The Euclidean metric splits as
\[
ds_{\mathbb E^{m}}^{2}=|du|^{2}+d\rho^{2}+\rho^{2}\,d\omega^{2}.
\]
Rescaling by $\rho^{-2}$ yields
\begin{equation*}\label{eq:key}
\rho^{-2}\,ds_{\mathbb E^{m}}^{2}
=\frac{|du|^{2}+d\rho^{2}}{\rho^{2}}+d\omega^{2}.
\end{equation*}
Identify $\big(\mathbb R^{k}\times(0,\infty), (|du|^{2}+d\rho^{2})/\rho^{2}\big)$ with the upper half--space
model of $\mathbb H^{k+1}$ and $(S^{m-k-1},d\omega^{2})$ with the round sphere. Thus
\[
\rho^{-2}\,ds_{\mathbb E^{m}}^{2} \;=\; g_{\mathbb H^{k+1}}\oplus g_{S^{m-k-1}}.
\]
Define
\[
J:\ \mathbb R^{m}\setminus\Pi\longrightarrow \mathbb H^{k+1}\times S^{m-k-1},\qquad
J(u,\rho,\omega)=\big((u,\rho),\,\omega\big),
\]
so that
\begin{equation}\label{eq:pull}
J^{*}\big(g_{\mathbb H^{k+1}}\oplus g_{S^{m-k-1}}\big)=\rho^{-2}\,ds_{\mathbb E^{m}}^{2}.
\end{equation}

Stereographic projection is conformal with
\begin{equation}\label{eq:stereo}
\sigma_p^{*}\,ds_{\mathbb E^{m}}^{2}
=\Big(\frac{1+|x|^{2}}{2}\Big)^{2} g_{S^{m}},\qquad x=(u,\rho\omega)=\sigma_p(y).
\end{equation}
Let $F:=J\circ\sigma_p:\ S^{m}\setminus S^{k}\to \mathbb H^{k+1}\times S^{m-k-1}$. Combining
\eqref{eq:pull} and \eqref{eq:stereo},
\[
\begin{aligned}
F^{*}\big(g_{\mathbb H^{k+1}}\oplus g_{S^{m-k-1}}\big)
&=(\sigma_p)^{*}\big(\rho^{-2}\,ds_{\mathbb E^{m}}^{2}\big)
=\rho^{-2}\,\sigma_p^{*} ds_{\mathbb E^{m}}^{2}\\
&=\Big(\frac{1+|x|^{2}}{2\rho}\Big)^{2} g_{S^{m}},
\end{aligned}
\]
a smooth positive multiple of $g_{S^{m}}$. Hence $F$ is a conformal diffeomorphism, proving
\[
S^{m}\setminus S^{k}\ \cong_{\mathrm{conf}}\ \mathbb H^{k+1}\times S^{m-k-1}.
\]
\end{proof}

\begin{lemma}\label{iso}
Suppose the Riemannian universal cover \( (\widetilde{M}, \tilde{g}) \) of a closed manifold \( (M^{2n}, g) \) ($n\geq 2$) with \( \mathrm{Sc}_{\tilde{g}} = 0 \)  
is conformal to
$
\bigl( S^{n} \times \mathbb{H}^{n},\;
g_{0} := g_{st} \oplus g_{H} \bigr).$
Then there exists a constant \( c > 0 \) such that
$(\widetilde{M}, \tilde{g})$  is isometric to 
$\bigl( S^{n} \times \mathbb{H}^{n},\, c^{2} g_{0} \bigr).
$
\end{lemma}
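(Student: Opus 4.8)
The plan is to identify \( \widetilde{M} \) with \( P := S^{n}\times\mathbb{H}^{n} \) via the given conformal diffeomorphism, write \( \tilde g = e^{2\phi}g_{0} \) with \( \phi\in C^{\infty}(P) \), and prove that \( \phi \) is constant. First I would observe that \( g_{0}=g_{st}\oplus g_{H} \) is already scalar-flat: \( \mathrm{Sc}_{g_{st}}=n(n-1) \) on \( S^{n} \) exactly cancels \( \mathrm{Sc}_{g_{H}}=-n(n-1) \) on \( \mathbb{H}^{n} \), so \( \mathrm{Sc}_{g_{0}}\equiv 0 \). Setting \( u:=e^{(n-1)\phi}>0 \), so that \( \tilde g = u^{4/(2n-2)}g_{0} \), the conformal transformation law for scalar curvature (as in the proof of Proposition~\ref{Yamabe}, with \( m=2n \)) together with \( \mathrm{Sc}_{\tilde g}=\mathrm{Sc}_{g_{0}}=0 \) collapses to
\[
\frac{4(2n-1)}{2n-2}\,\Delta_{g_{0}}u \;=\; \mathrm{Sc}_{\tilde g}\,u^{\frac{2n+2}{2n-2}}-\mathrm{Sc}_{g_{0}}\,u \;=\;0 ,
\]
so the conformal factor, in this guise, is a positive \( g_{0} \)-harmonic function on \( P \).

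Next I would bring in the deck group \( \Gamma:=\pi_{1}(M) \), acting on \( \widetilde{M}\cong P \) freely, properly discontinuously, cocompactly, and by isometries of \( \tilde g \); in particular every \( \gamma\in\Gamma \) preserves the conformal class \( [\tilde g]=[g_{0}] \) and hence is a conformal transformation of \( (P,g_{0}) \). The heart of the matter is the rigidity identity \( \mathrm{Conf}(P,g_{0})=\mathrm{Isom}(P,g_{0}) \). I would deduce it from Lemma~\ref{Stereographic}: \( (P,g_{0}) \) is conformally diffeomorphic (after the obvious reordering of factors) to the connected open set \( \Omega:=S^{2n}\setminus S^{n-1} \) with the restricted round metric, and since \( 2n\ge 3 \), Liouville's theorem — every conformal map between connected open subsets of \( S^{2n} \) is the restriction of a conformal transformation of \( S^{2n} \) — shows that each conformal self-map of \( \Omega \) extends to one of \( S^{2n} \), which, permuting \( \Omega \), must preserve the round subsphere \( S^{n-1} \). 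In the ball model \( \mathrm{Conf}(S^{2n})\cong\mathrm{Isom}(B^{2n+1}) \), the stabilizer of a round \( S^{n-1} \) equals the stabilizer of the totally geodesic \( \mathbb{H}^{n}\subset B^{2n+1} \) having \( S^{n-1} \) as ideal boundary, namely \( \mathrm{Isom}(\mathbb{H}^{n})\times\mathrm{O}(n+1) \); transported back to \( \Omega\cong S^{n}\times\mathbb{H}^{n} \) this acts precisely as \( \mathrm{Isom}(S^{n},g_{st})\times\mathrm{Isom}(\mathbb{H}^{n},g_{H}) \), i.e.\ by \( g_{0} \)-isometries. Hence each \( \gamma\in\Gamma \) is a \( g_{0} \)-isometry, and combining \( \gamma^{*}g_{0}=g_{0} \) with \( \gamma^{*}\tilde g=\tilde g \) yields \( \phi\circ\gamma=\phi \). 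Thus \( \phi \), \( u \), and \( g_{0} \) are all \( \Gamma \)-invariant.

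Finally I would descend everything to the compact quotient: \( g_{0} \) passes to a Riemannian metric \( \bar{g}_{0} \) on \( M=\widetilde{M}/\Gamma \), and \( u \) to a positive function \( \bar u\in C^{\infty}(M) \) with \( \Delta_{\bar{g}_{0}}\bar u=0 \); integrating, \( \int_{M}|\nabla\bar u|^{2}_{\bar{g}_{0}}\,dv_{\bar{g}_{0}}=\int_{M}\bar u\,\Delta_{\bar{g}_{0}}\bar u\,dv_{\bar{g}_{0}}=0 \), so \( \bar u \), and therefore \( u \) and \( \phi \), is constant, say \( \phi\equiv\log c \) with \( c>0 \); hence \( \tilde g=c^{2}g_{0} \). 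I expect the real work to lie in the middle step — establishing \( \mathrm{Conf}(S^{n}\times\mathbb{H}^{n},g_{0})=\mathrm{Isom}(S^{n}\times\mathbb{H}^{n},g_{0}) \) and, inside it, matching the abstract stabilizer \( \mathrm{Isom}(\mathbb{H}^{n})\times\mathrm{O}(n+1) \) with the factorwise product action on \( S^{n}\times\mathbb{H}^{n} \), which is the point requiring care; Step~1 is bookkeeping and Step~3 is the standard maximum principle on a closed manifold. As a fallback avoiding the full conformal-group computation, it would be enough to show by other means that \( \phi \) is \( \Gamma \)-invariant (equivalently, that \( g_{0} \) descends to \( M \)), after which the same compact-quotient argument finishes the proof.
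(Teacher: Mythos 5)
Your proposal is correct and shares the overall skeleton of the paper's proof: conjugate the deck transformations through the conformal diffeomorphism to obtain conformal self-maps of \( (S^{n}\times\mathbb{H}^{n},g_{0}) \), show these are \( g_{0} \)-isometries, deduce that the conformal factor is \( \Gamma \)-invariant, descend to the closed quotient, and integrate. The genuine difference lies in the key rigidity input \( \mathrm{Conf}(S^{n}\times\mathbb{H}^{n},g_{0})=\mathrm{Isom}(S^{n}\times\mathbb{H}^{n},g_{0}) \): the paper cites Jimenez--Tojeiro for the fact that every conformal diffeomorphism of \( \mathbb{H}^{k}\times S^{\ell} \) is an isometry, whereas you prove it from Lemma~\ref{Stereographic} plus the classical Liouville theorem (applicable since \( 2n\ge 4 \)), extending a conformal self-map of \( \Omega=S^{2n}\setminus S^{n-1} \) to a M\"obius transformation of \( S^{2n} \) that must preserve the round \( S^{n-1} \), and then identifying the stabilizer with the factorwise isometry group. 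This is a legitimate, self-contained alternative; the one delicate point, which you rightly flag, is verifying that the stabilizer of \( S^{n-1} \) acts on \( \Omega\cong\mathbb{H}^{n}\times S^{n} \) by \( g_{0} \)-isometries, and this can be checked directly in the coordinates of Lemma~\ref{Stereographic}: Euclidean isometries preserving \( \Pi \), dilations centered on \( \Pi \), and inversions in spheres centered on \( \Pi \) all preserve \( \rho^{-2}\,ds_{\mathbb{E}}^{2} \), and they exhaust the stabilizer. A secondary, minor difference is the descent step: you use \( \mathrm{Sc}_{g_{0}}=0 \) to recast the conformal factor as a positive \( g_{0} \)-harmonic function \( u \) and push \( u \) and \( g_{0} \) down to \( M \), finishing with the constancy of a positive harmonic function on a closed manifold, while the paper descends the nonlinear identity \( \Delta_{g}\tilde\varphi=\tfrac{m-2}{2}\lvert\nabla\tilde\varphi\rvert^{2}_{g} \) with respect to the given metric \( g \) and integrates; both are equally short, your version being linear but requiring \( g_{0} \) itself to descend (which your isometry step already guarantees).
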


\begin{proof}
Let $F : (\widetilde M, \tilde g) \longrightarrow S^{n} \times \mathbb{H}^{n}$
be a conformal diffeomorphism.  
Then there exists a smooth function 
$\varphi : \widetilde M \longrightarrow \mathbb{R}$
such that
\begin{equation*}
F^{*}(g_{0}) = e^{-2\varphi}\,\tilde g .
\end{equation*}
Equivalently,
$\tilde g = e^{2\varphi}\, F^{*}(g_{0}).$

Since $\mathrm{Scal}(g_{0}) = 0$ and $\mathrm{Scal}(\tilde g)=0$, the standard conformal--change formula for the scalar curvature implies that
\[
\Delta_{\tilde g}\varphi 
= \frac{m-2}{2}\,|\nabla\varphi|^{2}_{\tilde g} \;\geq\; 0
\quad\text{on } \widetilde M.
\]

Let $\Gamma = \pi_{1}(M)$ denote the group of deck transformations of the universal covering 
$\widetilde M \to M$. 
Each $\gamma \in \Gamma$ acts on $\widetilde M$ by an isometry of $(\widetilde M,\tilde g)$.
Define
\[
f_{\gamma} := F \circ \gamma \circ F^{-1} : 
S^{n} \times \mathbb H^{n} \longrightarrow S^{n} \times \mathbb H^{n}.
\]
We now compute the conformal factor of $f_{\gamma}$ with respect to $g_{0}$.

Let $\psi := \varphi \circ F^{-1} : S^{n} \times \mathbb H^{n} \to \mathbb{R}$.
Then we can rewrite
\[
\tilde g = e^{2\varphi}\,F^{*}(g_{0})
\qquad\Longrightarrow\qquad
(F^{-1})^{*}\tilde g = e^{2\psi}\,g_{0}.
\]
Since $\gamma$ is an isometry of $(\widetilde M,\tilde g)$, we have $\gamma^{*}\tilde g = \tilde g$, and hence
\[
\gamma^{*}F^{*}g_{0}
= \gamma^{*}\bigl(e^{-2\varphi}\tilde g\bigr)
= e^{-2\varphi\circ\gamma}\,\tilde g.
\]
Using $f_{\gamma} = F\circ\gamma\circ F^{-1}$ and pulling back by $F^{-1}$, we obtain
\begin{align*}
f_{\gamma}^{*}g_{0}
&= (F\gamma F^{-1})^{*}g_{0}
= (F^{-1})^{*}\bigl(\gamma^{*}F^{*}g_{0}\bigr) \\
&= (F^{-1})^{*}\bigl(e^{-2\varphi\circ\gamma}\tilde g\bigr) \\
&= e^{-2(\varphi\circ\gamma)\circ F^{-1}} \,(F^{-1})^{*}\tilde g \\
&= e^{-2\psi\circ f_{\gamma}} \, e^{2\psi} g_{0}
= e^{2(\psi - \psi\circ f_{\gamma})}\,g_{0}.
\end{align*}
Thus each $f_{\gamma}$ is a conformal diffeomorphism of $(S^{n} \times \mathbb H^{n},g_{0})$ with conformal factor $e^{2(\psi - \psi\circ f_{\gamma})}$.

By a result of Jimenez and Tojeiro \cite[Cor.~2]{MR4374774}, every conformal diffeomorphism of a product 
$\mathbb H^{k} \times S^{\ell}$ is in fact an isometry. 
Applying this to $f_{\gamma}$, we conclude that $f_{\gamma}$ is an isometry of $(S^{n} \times \mathbb H^{n},g_{0})$ for every $\gamma \in \Gamma$. 
Hence its conformal factor must be identically $1$, so that
\[
\psi \circ f_{\gamma} = \psi
\qquad \text{for all } \gamma\in\Gamma.
\]

Returning to $\varphi$ on $\widetilde M$, we note that
\[
\psi \circ f_{\gamma} 
= \psi \quad\Longleftrightarrow\quad 
(\varphi \circ F^{-1}) \circ (F\circ\gamma\circ F^{-1}) = \varphi \circ F^{-1}
\quad\Longleftrightarrow\quad 
\varphi \circ \gamma = \varphi.
\]
Thus
\begin{equation*}
\varphi \circ \gamma = \varphi
\qquad\text{for all } \gamma \in \Gamma,
\end{equation*}
and $\varphi$ is $\Gamma$--invariant. 
Therefore $\varphi$ descends to a smooth function on the closed manifold $M$.
Since $\varphi$ is $\Gamma$--invariant, there exists a smooth function 
$\tilde\varphi : M \to \mathbb R$ such that
$\varphi = \tilde\varphi \circ \pi.$
Because $\pi : (\widetilde M,\tilde g) \to (M,g)$ is a Riemannian covering, it is a local isometry and 
$\tilde g = \pi^{*}g$. In particular, for any smooth function $\tilde\varphi$ on $M$ one has
\[
\nabla_{\tilde g}(\tilde\varphi \circ \pi) = (\nabla_{g}\tilde\varphi)\circ \pi,
\qquad
\Delta_{\tilde g}(\tilde\varphi \circ \pi) = (\Delta_{g}\tilde\varphi)\circ \pi,
\]
and therefore
\[
|\nabla\varphi|^{2}_{\tilde g}
= |\nabla(\tilde\varphi\circ\pi)|^{2}_{\tilde g}
= \bigl(|\nabla\tilde\varphi|^{2}_{g}\bigr)\circ\pi.
\]
Substituting $\varphi = \tilde\varphi\circ\pi$ into the identity
\[
\Delta_{\tilde g}\varphi = \frac{m-2}{2}\,|\nabla\varphi|^{2}_{\tilde g}
\quad\text{on } \widetilde M
\]
we obtain
\[
\bigl(\Delta_{g}\tilde\varphi\bigr)\circ\pi
= \frac{m-2}{2}\,\bigl(|\nabla\tilde\varphi|^{2}_{g}\bigr)\circ\pi.
\]
Since $\pi$ is surjective, it follows that
\begin{equation*}
\Delta_{g}\tilde\varphi = \frac{m-2}{2}\,|\nabla\tilde\varphi|^{2}_{g}
\qquad\text{on } M.
\end{equation*}

Since $\tilde\varphi$ is now globally defined on $M$, we may integrate the identity
\[
\Delta_{g}\tilde\varphi = \frac{m-2}{2}\,|\nabla\tilde\varphi|^{2}_{ g}
\]
over $M$:
\[
0 = \int_{M} \Delta_{ g}\tilde\varphi\, d\mathrm{vol}_{ g}
= \frac{m-2}{2} \int_{M} |\nabla\tilde\varphi|^{2}_{g}\, d\mathrm{vol}_{ g}.
\]
It follows that $|\nabla\tilde\varphi|_{ g} \equiv 0$ on $M$, and hence $\varphi$ is constant on $\widetilde M$ as well. 
Let $c := e^{\varphi} > 0$ denote this constant. 
Thus, we obtain
\[
\tilde g = e^{2\varphi} F^{*} g_{0} = c^{2} F^{*}g_{0},
\]
and, upon identifying $\widetilde M$ with $S^{n} \times \mathbb H^{n}$ via $F$, we conclude
$\tilde g = c^{2} g_{0}.$
Thus $(\widetilde M,\tilde g)$ is isometric to $\bigl(S^{n} \times \mathbb H^{n}, c^{2} g_{0}\bigr)$, 
as claimed.
\end{proof}

\begin{remark}
The condition \(\pi_2(M^4) \neq 0\) in Theorem~\ref{the universal cover} cannot be replaced by the nonexistence of a flat metric.  
For example, LeBrun and Maskit~\cite[Proposition~3.3]{zbMATH05319680} show that there exists a smooth family of metrics \(h_t\) on \((S^1 \times S^3) \# (S^1 \times S^3)\), \(t \in [-1,1]\), such that for each \(t\), the metric \(h_t\) is locally conformally flat, with $Y(M, [h_1]) > 0$ and  $Y(M, [h_{-1}]) < 0.$ On the other hand, Große and Nardmann~\cite[Theorem~1]{MR3192307} show that if \(N\) is compact, then the Yamabe constant \(Y(N,[g])\) is continuous at \(g\) with respect to the compact-open \(C^2\)-topology.  Therefore, there exists \(s \in (-1,1)\) such that \(Y(M, [h_s]) = 0\).  
Thus, \(h_s\) is conformal to a locally conformally flat scalar-flat metric.  However, the universal cover of \((S^1 \times S^3) \# (S^1 \times S^3)\) is homeomorphic to the complement of a Cantor set in \(S^4\). Its second fundamental group is trivial, so it is not of the form \(S^2 \times \mathbb{H}^2\).  
 
 LeBrun and Maskit’s examples also show that the Hausdorff dimension of a Kleinian group depends not only on the group’s algebraic structure but also on its specific representation in the isometry group of hyperbolic space. For instance, for 
\[
(M, h_1) = \bigl((S^1 \times S^3) \# (S^1 \times S^3),\, h_1\bigr),
\quad \Gamma_1 = \rho_1(\pi_1(M)),
\]
one has \( \dim_{\mathcal{H}}(\Gamma_1) < 1 \), 
whereas for 
\[
(M, h_s) = \bigl((S^1 \times S^3) \# (S^1 \times S^3),\, h_s\bigr),
\quad \Gamma_s = \rho_s(\pi_1(M)),
\]
one obtains \( \dim_{\mathcal{H}}(\Gamma_s) = 1 \).

\end{remark}

In fact, the argument in the proof of Theorem~\ref{the universal cover} extends to higher dimensions if we replace the condition \( \pi_2(M) \neq 0 \) by \( \widetilde{H}_n(\widetilde{M}^{2n}; \mathbb{Z}) \neq 0 \).

\begin{corollary}\label{2n}
Let \( (M^{2n}, g) \) (\( n \geq 2 \)) be a closed, locally conformally flat, and scalar–flat \( 2n \)-manifold satisfying \( \widetilde{H}_n(\widetilde{M}; \mathbb{Z}) \neq 0 \).  
Then its Riemannian universal cover \( (\widetilde{M}, \tilde{g}) \) is isometric to 
\( (\mathbb{H}^n \times S^n,\, g_{H} \oplus g_{st}) \) up to homothety.
\end{corollary}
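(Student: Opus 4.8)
The plan is to run the proof of Theorem~\ref{the universal cover} essentially verbatim, making three substitutions: the ambient dimension $4$ becomes $2n$, the hypothesis $\pi_2(M^4)\neq 0$ becomes the homological hypothesis $\widetilde H_n(\widetilde M;\mathbb Z)\neq 0$ directly (so the Hurewicz step is simply dropped), and the relevant \v{C}ech degree $1$ becomes $n-1$. First I would record that $M^{2n}$ admits neither a flat metric nor a locally conformally flat metric of positive scalar curvature: in the flat case $\widetilde M$ is contractible; in the positive case the Schoen--Yau developing map theorem~\cite{zbMATH04075988} realizes $\widetilde M$ (for that metric) as $S^{2n}\setminus\Lambda'$ with $\dim_{\mathcal H}\Lambda'<n-1$, whence $\widetilde H_n(\widetilde M;\mathbb Z)\cong\check{\widetilde H}^{\,n-1}(\Lambda';\mathbb Z)=0$ by Alexander duality; either conclusion contradicts the hypothesis. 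Working now with the given scalar-flat metric, I identify $\pi_1(M)$ with its holonomy image $\Gamma\subset\operatorname{Conf}(S^{2n})$; the Schoen--Yau theorem together with the Liouville theorem (Theorem~\ref{Liouville theorem}) gives diffeomorphisms $M\cong\Omega/\Gamma$ and $\widetilde M\cong\Omega=S^{2n}\setminus\Lambda(\Gamma)$.

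Next I would establish that $\Gamma$ is non-elementary: if $\Lambda(\Gamma)$ had at most two points, then $S^{2n}\setminus\Lambda(\Gamma)$ would have the homotopy type of $S^{2n}$, of $\mathbb R^{2n}$, or of $S^{2n-1}$ according as $\Lambda(\Gamma)$ has zero, one, or two points, so $\widetilde H_n(\widetilde M;\mathbb Z)=0$ for $n\ge 2$, a contradiction. Then $\Gamma$ is geometrically finite: since $M\cong\Omega/\Gamma$ is a closed manifold carrying a locally conformally flat scalar-flat metric, the Schoen--Yau bound on limit sets of closed locally conformally flat manifolds of nonnegative scalar curvature gives $\dim_{\mathcal H}(\Lambda(\Gamma))\le n-1<2n$; as $\Gamma$ is finitely generated and conformally finite (there are no cusps because $\Omega/\Gamma$ is closed), the Chang--Qing--Yang criterion~\cite{MR2070141} yields geometric finiteness. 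The Patterson--Sullivan theorem~\cite{zbMATH03903608} then gives $\dim_{\mathcal H}(\Lambda(\Gamma))=\delta(\Gamma)$, and Nayatani's theorem~\cite{zbMATH01028179}, applied to the scalar-flat metric on $\Omega/\Gamma$ in ambient dimension $2n$, forces the critical exponent to equal the critical value $(2n-2)/2$, i.e. $\delta(\Gamma)=n-1$. Hence $\dim_{\mathcal H}(\Lambda(\Gamma))=n-1$.

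For the matching lower bound on the \emph{topological} dimension, \v{C}ech--Alexander duality together with $\widetilde H_n(\widetilde M;\mathbb Z)\neq 0$ gives
\[
0\neq\widetilde H_n(\widetilde M;\mathbb Z)\;\cong\;\widetilde H_n\bigl(S^{2n}\setminus\Lambda(\Gamma);\mathbb Z\bigr)\;\cong\;\check{\widetilde H}^{\,n-1}\bigl(\Lambda(\Gamma);\mathbb Z\bigr),
\]
so $\dim_{\mathbb Z}(\Lambda(\Gamma))\ge n-1$ and therefore $\dim(\Lambda(\Gamma))\ge n-1$; combined with $\dim(\Lambda(\Gamma))\le\dim_{\mathcal H}(\Lambda(\Gamma))=n-1$ this forces $\dim(\Lambda(\Gamma))=\dim_{\mathcal H}(\Lambda(\Gamma))=n-1$. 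Kapovich's theorem~\cite{MR2491697} now applies (the hypothesis $n\ge 2$ ensures $n-1\ge 1$) and shows that $\Lambda(\Gamma)$ is embedded as a round $(n-1)$-sphere, so $(\widetilde M,\tilde g)$ is conformal to $\bigl(S^{2n}\setminus S^{n-1},\,g_{st}|_{S^{2n}\setminus S^{n-1}}\bigr)$. Lemma~\ref{Stereographic} with $m=2n$ and $k=n-1$ identifies this conformally with $(\mathbb H^n\times S^n,\,g_H\oplus g_{st})$, and Lemma~\ref{iso} upgrades the conformal identification to an isometry up to homothety, using $\mathrm{Sc}_{\tilde g}=0$; this is exactly the assertion of the corollary, and for $n=2$ it recovers Theorem~\ref{the universal cover}.

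The only genuinely new points to verify, beyond transcribing the four-dimensional argument, are that the two analytic inputs survive the change of dimension: the Schoen--Yau estimate $\dim_{\mathcal H}(\Lambda)\le(N-2)/2$ for closed locally conformally flat $N$-manifolds with nonnegative scalar curvature (used with $N=2n$ to get geometric finiteness), and Nayatani's scalar-curvature dichotomy characterizing the scalar-flat case by $\delta(\Gamma)=(N-2)/2$; both are stated in arbitrary dimension in the cited works, so I expect no new analysis is needed. The main (modest) obstacle is purely bookkeeping: in the absence of the Hurewicz isomorphism one must check that the bare homological hypothesis $\widetilde H_n(\widetilde M;\mathbb Z)\neq 0$ still plays both roles it played in dimension four — excluding an elementary $\Gamma$ and supplying the nonvanishing \v{C}ech group for Alexander duality — and this works precisely because $\mathbb H^n\times S^n\simeq S^n$ carries its unique nonzero reduced homology in degree $n$, with $n\le 2n-2$ for $n\ge 2$.
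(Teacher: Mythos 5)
Your proposal is correct and follows essentially the same route as the paper's own proof: Schoen--Yau/Liouville to identify $\widetilde M\cong S^{2n}\setminus\Lambda(\Gamma)$, Čech--Alexander duality to get $\dim(\Lambda)\ge n-1$ from $\widetilde H_n(\widetilde M;\mathbb Z)\neq 0$, the scalar-flat bound $\dim_{\mathcal H}(\Lambda)\le n-1$ plus Chang--Qing--Yang for geometric finiteness, Kapovich's theorem for the round $(n-1)$-sphere, and then Lemma~\ref{Stereographic} and Lemma~\ref{iso}. The only cosmetic difference is that you pin down $\dim_{\mathcal H}(\Lambda)=n-1$ via Patterson--Sullivan and Nayatani's equality, whereas the paper's corollary simply squeezes the topological lower bound against the Hausdorff upper bound; both steps already appear in the four-dimensional theorem you are adapting.
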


\begin{proof}
We have the diffeomorphisms 
$M^{2n} \cong \Omega / \Gamma$ and  $\widetilde{M} \cong \Omega = S^{2n} \setminus \Lambda(\Gamma).$
By the Čech–Alexander duality theorem, 
$\widetilde{H}_n(\widetilde{M}; \mathbb{Z}) \cong \check{H}^{\,n-1}(\Lambda; \mathbb{Z}) \neq 0.$
Hence, \( \dim_\mathbb{Z}(\Lambda(\Gamma)) =\dim(\Lambda(\Gamma)) \geq n-1 \), which implies that \( \Gamma \) is non-elementary.  

The existence of a locally conformally flat scalar–flat metric implies 
\( \dim_{\mathcal{H}}(\Lambda(\Gamma)) \leq \tfrac{2n-2}{2} = n-1 < 2n \).
Thus, by the theorem of Chang–Qing–Yang, \( \Gamma \) is geometrically finite and
$\dim(\Lambda(\Gamma)) = \dim_{\mathcal{H}}(\Lambda(\Gamma)) = n-1.$  Then Kapovich’s theorem implies that \( \Lambda(\Gamma) \) is a round \( (n-1) \)-sphere. Then Lemma~\ref{Stereographic} shows that 
$
S^{2n} \setminus S^{n-1} \cong_{\mathrm{conf}}
(\mathbb{H}^{n} \times S^{n},\, g_{H} \oplus g_{st}),
$
and Lemma~\ref{iso} implies that \( \widetilde{M} \) is isometric, up to homothety, to 
$(\mathbb{H}^{n} \times S^{n},\, g_{H} \oplus g_{st}).$
This completes the proof.
\end{proof}

\begin{remark}
The proof of Corollary~\ref{2n}, as well as the statement itself, differs from Noronha’s result mentioned above, which asserts that if \( b_n(M^{2n}) > 0 \), then \( M^{2n} \) is isometrically covered by either \( \mathbb{E}^{2n} \) or \( S^n \times \mathbb{H}^n \).  
Indeed, the conditions \( b_n(M^{2n}) > 0 \) and \( \widetilde{H}_n(\widetilde{M}; \mathbb{Z}) \neq 0 \) are not equivalent in general.  
For example, the manifold \( M := X / \langle F \rangle \) in Example~\ref{example scalar flat} admits a locally conformally flat scalar-flat metric and satisfies \( \widetilde{H}_2(\widetilde{M}; \mathbb{Z}) \neq 0 \), yet \( b_2(M) = 0 \).  
Conversely, for the torus \( T^{2n} \), one has \( b_n(T^{2n}) > 0 \), but \( \widetilde{H}_n(\widetilde{T^{2n}}; \mathbb{Z}) = 0 \).
\end{remark}

The proof of Corollary~\ref{2n} does not extend to odd dimensions, since the scalar-flat condition forces the dimensions in  
\((\mathbb{H}^{m} \times S^{n},\, g_{H} \oplus g_{st})\) to satisfy \(m = n\).  
With stronger topological restrictions on the manifold, one might hope to characterize the odd-dimensional case as well.  
This leads to the following question:

\begin{question}\label{aspherical scalar}
Let \((M^n, g)\) be a closed, aspherical, locally conformally flat, scalar–flat, but non-flat manifold.  
Is its universal cover necessarily isometric to Euclidean space?
\end{question}

Although we are not yet able to answer Question~\ref{aspherical scalar} in the affirmative, we can show that its fundamental group must be infinite.

\begin{proposition}
Assume that $(M^n, g)$ is a closed, locally conformally flat, scalar-flat $n$–manifold ($n \ge 3$). Then the fundamental group of $M$ is infinite.
\end{proposition}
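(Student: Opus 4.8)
The plan is to argue by contradiction and pass to the universal cover. Suppose $\pi_1(M)$ is finite. Then the Riemannian universal cover $(\widetilde M,\tilde g)\to(M,g)$ is a finite covering, so $\widetilde M$ is a \emph{closed} simply connected manifold, and since local conformal flatness and scalar-flatness are local conditions they are inherited by $\tilde g$. Thus $(\widetilde M,\tilde g)$ is a closed, simply connected, locally conformally flat, scalar-flat $n$-manifold.

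Next I would invoke Kuiper's theorem: a closed simply connected locally conformally flat manifold is conformally diffeomorphic to the round sphere $(S^n,g_{st})$. Hence we may write $\tilde g=u^{4/(n-2)}g_{st}$ for some smooth $u>0$ on $S^n$. Applying the conformal transformation law for the scalar curvature exactly as in Proposition~\ref{Yamabe}, with $\mathrm{Sc}_{\tilde g}\equiv 0$ and $\mathrm{Sc}_{g_{st}}\equiv n(n-1)$, gives
\[
\frac{4(n-1)}{n-2}\,\Delta_{g_{st}}u \;=\; \mathrm{Sc}_{\tilde g}\,u^{\frac{n+2}{n-2}}-\mathrm{Sc}_{g_{st}}\,u \;=\; -\,n(n-1)\,u,
\]
that is, $\Delta_{g_{st}}u=-\tfrac{n(n-2)}{4}\,u$ on $S^n$.

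To finish, evaluate at a maximum point $p$ of $u$ (which exists since $S^n$ is closed): with the positive-spectrum convention $\Delta_g=-g^{ij}\nabla_i\nabla_j$ used in this paper one has $\Delta_{g_{st}}u(p)\ge 0$, whereas the right-hand side equals $-\tfrac{n(n-2)}{4}u(p)<0$ because $u(p)>0$ and $n\ge3$. This contradiction shows $\pi_1(M)$ cannot be finite, so it is infinite. (Equivalently, $\widetilde M$ conformal to $S^n$ forces the conformal class of $\tilde g$ to have positive Yamabe constant $Y(S^n,[g_{st}])>0$, which is incompatible with $\tilde g$ being scalar-flat.) There is no real obstacle beyond this reduction; the only point requiring care is that finiteness of $\pi_1(M)$ is precisely what makes $\widetilde M$ compact, as needed both for Kuiper's theorem and for the maximum principle.
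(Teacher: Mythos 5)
Your proof is correct and follows essentially the same route as the paper: assume $\pi_1(M)$ finite, use Kuiper's theorem to identify the (now closed) universal cover conformally with $(S^n,g_{st})$, and show the resulting equation $\frac{4(n-1)}{n-2}\Delta_{g_{st}}u=-n(n-1)u$ admits no positive solution. The only difference is the final step, where you rule out a positive solution by the maximum principle at a maximum point (equivalently, positivity of $Y(S^n,[g_{st}])$), whereas the paper multiplies by $u$ and integrates; both are equally valid elementary finishes.
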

\begin{proof}
If $\pi_1(M)$ is finite, then the Riemannian universal cover $(\widetilde{M},\widetilde{g})$ is conformally equivalent to $(S^n, g_{st})$. Hence
$\widetilde{g} = u^{\frac{4}{n-2}} g_{st}$
for some $0 < u \in C^\infty(S^n)$, and $\widetilde{g}$ has scalar curvature $\mathrm{Sc}_{\widetilde{g}} \equiv 0$.

However, no such positive function $u$ can exist.  
Indeed, for any $n \ge 3$ and $\mathrm{Sc}_{\widetilde{g}} = 0$, we obtain
\[
\frac{4(n-1)}{n-2}\,\Delta_{g_{st}} u
+
n(n-1)\, u
= 0.
\]
Recall that we adopt the sign convention $\Delta_g = - \operatorname{div}_g \nabla,$ so that $\Delta_g$ is a nonnegative operator. Multiplying by $u$ and integrating over $S^n$ gives
\[
0
=
\frac{4(n-1)}{n-2}\,\int_{S^n} u\, \Delta_{g_{st}} u \, d\mu_{g_{st}}
+
n(n-1) \int_{S^n} u^2\, d\mu_{g_{st}}.
\]

Since $S^n$ is closed,
\[
\int_{S^n} u \,\Delta_{g_{st}} u \, d\mu_{g_{st}}
=
 \int_{S^n} |\nabla u|^2 \, d\mu_{g_{st}}.
\]

Hence each integral must vanish. That means $u \equiv 0$, contradicting $u>0$. Therefore, no positive smooth function $u$ on $S^n$ can produce a scalar-flat metric conformal to $g_{st}$.  
Thus the fundamental group of $M$ must be infinite.
\end{proof}

\addcontentsline{toc}{section}{\refname}
\bibliographystyle{alpha}
\bibliography{reference}

\end{document}